\newtheorem{thm}{Theorem}[section]
\newtheorem{cor}[thm]{Corollary}
\newtheorem{lem}[thm]{Lemma}
\newtheorem{prop}[thm]{Proposition}
\newtheorem{defn}[thm]{Definition}
\newtheorem{exam}[thm]{Example}
\numberwithin{equation}{section}
\DeclareMathOperator{\Hom}{Hom}
\begin{document}

\title[On maximal solvable extensions of a pure non-characteristically nilpotent Lie algebra]{On maximal solvable extensions of a pure non-characteristically nilpotent Lie algebra}

\author{K.K. Abdurasulov, B.A. Omirov}

\address{Kobiljon K.Abdurasulov \newline \indent
Institute of Mathematics of Uzbekistan Academy of Sciences, Tashkent, Uzbekistan} \email{{\tt abdurasulov0505@mail.ru}}

\address{Bakhrom A.Omirov \newline \indent
National University of Uzbekistan $\&$ Institute of Mathematics of Uzbekistan Academy of Sciences, \newline \indent
Tashkent, Uzbekistan} \email{{\tt omirovb@mail.ru}}

\begin{abstract} In this paper we introduce the notion of pure non-characteristically nilpotent Lie algebra and under a condition we prove that a complex maximal extension of a finite-dimensional pure non-characteristically nilpotent Lie algebra is isomorphic to a semidirect sum of the nilradical and its maximal torus. We also prove that such solvable Lie algebras are complete and we specify a subclass of the maximal solvable extensions of pure non-characteristically nilpotent Lie algebras that have trivial cohomology group. Some comparisons with the results obtained earlier are given.
\end{abstract}

\subjclass[2010]{17B05, 17B30, 17B40, 17B55, 17B56.}

\keywords{solvable Lie algebra, nilpotent Lie algebra, derivation, complete algebra, group of cohomologies, solvable extension of nilpotent Lie algebra, torus.}

\maketitle


\section{Introduction}

Lie algebras play an essential role in mathematics and physics, especially in quantum field theory, quantum mechanics and string theory. An extensive study of Lie algebras gave many beautiful results and generalizations. From the classical theory of finite-dimensional Lie algebras, it is known that an arbitrary Lie algebra is decomposed into a semidirect sum of the solvable radical and semisimple subalgebra (Levi's theorem). According to the Cartan-Killing theory, a semisimple Lie algebra can be represented as a direct sum of simple ideals, which are completely classified \cite{Jac}. Thanks to Mal’cev's and Mubarakzjanov's  results the study of non-nilpotent solvable Lie algebras is reduced to the study of nilpotent ones \cite{Mal}, \cite{mubor}. Therefore, the study of finite-dimensional Lie algebras is focused on nilpotent algebras. In fact, the main idea of the fundamental Mal’cev's work consists of introducing the so-called splitable type of solvable Lie algebras. He proved that these algebras can be written as a semidirect sum of the nilradical and an abelian subalgebra, whose elements in the regular representation are diagonalizable.

The study of solvable Lie algebras with some special types of nilradical was the subject of various papers (see \cite{AnCaGa1,AnCaGa2,BoPaPo,Cam,NdWi,SnKa,SnWi,TrWi,WaLiDe} and references therein). For example,  the cases where the nilradical is filiform, quasi-filiform and Abelian  were considered in \cite{AnCaGa2,Cam,NdWi,WaLiDe}.

Since the description of finite-dimensional nilpotent Lie algebras is an immense problem, It is customary to study them under some additional restrictions. In the case of non-nilpotent solvable Lie algebras, the picture is more or less clear. The point here is that such algebras can be reconstructed by using their nilradical and special types of derivations. Mubarakzjanov's method (see  \cite{mubor}) on construction of solvable Lie algebras in terms of their nilradicals seemed to be effective.

It seems that if one writes a non-nilpotent solvable Lie algebra as a direct sum of a nilradical and the subspace complementary to the nilradical then the estimate of the dimension of complementary subspace is the key points of the method. In \cite{snoble} it was proved that this dimension is bounded above by the number of the generators of the nilradical.

The latest result (see \cite{Qobil}) in the use of this relation asserts that if the dimension equals to the number of generators of the nilradical such a solvable Lie algebra is unique up to isomorphism. Moreover, this solvable Lie algebra is complete (centerless and the first cohomology group is trivial).

It is well known that Lie algebras form an algebraic variety defined by the skew-symmetry and the Jacobi identity, where the class of solvable Lie algebras is a subvariety. The key role in the variety of Lie algebras play the so-called rigid algebras, they generate the irreducible components of the variety. It is known that the vanishing of the second cohomology group of Lie algebras implies their rigidity \cite{rigid}. The natural problem arises to calculate the second cohomology groups of algebras with coefficients in the adjoint representation, which is very important due to its applications in the deformation theory. The results obtained so far on descriptions of some solvable Lie algebras with the maximal possible complementary subspace show that they have trivial second cohomology groups. So, it might seems that these groups for such algebras are always trivial. However, counterexamples constructed recently show that it is not the case. Therefore, the structure of such solvable Lie algebras is complicated, and their study must be carried out more delicately.

The description of the cohomology groups of solvable Lie algebras with the diagonal action of the complementary subspace to the nilradical can be realized by using the Hochschild-Serre factorization theorem (see \cite{cohomology}). This theorem reduces the  calculation of the higher cohomology groups to that of lower cohomologies of certain subspaces of the solvable Lie algebra \cite{Ancochea1}, \cite{Leger2}.

Recall that in terms of the cohomology groups the first Whitehead's lemma is stated as follows: For a finite-dimensional semisimple Lie algebra over a field of characteristic zero and a finite-dimensional $\mathcal{L}$-module $\mathcal{M}$ one has $H^1(\mathcal{L},\mathcal{M})=0.$ The validity of this lemma for non-nilpotent solvable Lie algebras is unknown, while for an arbitrary nilpotent Lie algebra $\mathcal{N}$ it is known that $H^1(\mathcal{N},\mathcal{N})\neq0.$

This paper is devoted to study of maximal solvable extensions of some nilpotent Lie algebras and their cohomology groups. In fact, we obtain the description of some maximal solvable extensions of nilpotent Lie algebras that gives a partial positive solution to \v{S}nobl's Conjecture  \cite{Snobl}, which states the following:

Let $\mathfrak{n}$ be a complex nilpotent (not characteristically nilpotent) Lie algebra and $\mathfrak{s}, \tilde{\mathfrak{s}}$ be solvable Lie algebras with the nilradical $\mathfrak{n}$ of maximal dimension in the sense that no such solvable algebra of larger dimension exists. Then, $\mathfrak{s}$ and $\tilde{\mathfrak{s}}$ are isomorphic.

In fact, as was shown by V.Gorbatsevich in \cite{Gorbatsevich} the conjecture is not true in that general form as asserted by \v{S}nobl. Namely, V.Gorbatsevich constructed two
non-isomorphic solvable extensions of the nilradical, which is a direct sum of a seven-dimensional characteristically nilpotent and an one-dimensional nilpotent Lie algebras. Based on these constructions we conclude that \v{S}nobl's conjecture is not true when the nilradical is not pure non-characteristically nilpotent algebra (see Definition \ref{pure}).

The organization of the paper is as follows. Section 2 contains some preliminary results.  In Section 3 we present known examples which show the existence of non-isomorphic maximal extensions of some nilpotent Lie algebras and give the notion of pure non-characteristic nilpotent Lie algebras. Under a condition the explicit description of complex maximal solvable Lie algebras with a given pure non-characteristically nilpotent nilradical is obtained (see Theorem \ref{mainthm} and Proposition \ref{lie}). As a consequence of this description we derive that such maximal solvable extensions  are isomorphic to a semidirect sum of the nilradical and its maximal torus (Theorem \ref{torus}).

In Section 4 it is established that a complex maximal solvable extension of pure non-characteristically nilpotent Lie algebra which satisfies the condition {\bf A)}, admits only inner derivations (Theorem \ref{thmH1eq0}) whereas non-maximal solvable extension admits an outer derivation (Proposition \ref{thmH1neq0}).

In section 5 we give several applications of the description of maximal solvable extensions of nilpotent Lie algebras (Theorem \ref{mainthm}, Proposition \ref{lie}) to some classification results obtained earlier. Section 6 is devoted to the proof of vanishing of the cohomology groups for some classes of maximal solvable extensions of nilpotent Lie algebras (Theorem 6.2, Theorem 6.3).
Finally, in Section 7 we exhibit some examples showing that application of Theorem 6.2 much simplifies the calculations of the cohomology groups of some maximal solvable extensions of nilpotent Lie algebras.

Throughout the paper all vector spaces and algebras considered to be finite-dimensional over the field $\mathbb{C}$ unless stated otherwise.

\section{Preliminaries}

Let $N$ be a nilpotent Lie algebra and $\mathcal{T}$ be its torus. Consider a solvable Lie algebra $R_{\mathcal{T}}=\mathcal{N}\oplus \mathcal{T}$ with the products
$$[\mathcal{N},\mathcal{N}], \quad [\mathcal{N}, \mathcal{T}]=\mathcal{T}(N), \quad [\mathcal{T}, \mathcal{T}]=0.$$
In the case of maximal torus $\mathcal{T}_{max}$ of $N$ we denote the solvable Lie algebra $\mathcal{N}\oplus\mathcal{T}_{max}$ by $R_{\mathcal{T}_{max}}.$

Let $\{e_1, \dots, e_n\}$ be a basis of the nilpotent Lie algebra $\mathcal{N}$ then $[e_i,e_j]=\sum_{t=1}^{n}c_{i,j}^te_t$ with $c_{i,j}^t$ in the ground field. For $i, j, t$ such that $c_{i,j}^t\neq 0$ we consider the system of linear equations
$$S_e: \quad \left\{\alpha_{i}+\alpha_{j}=\alpha_{t}\right.$$
in the variables $\alpha_1, \dots, \alpha_n$ as $i, j, t$ run from $1$ to $n$. Following the paper \cite{Leger1} we denote by $r\{e_1, \dots, e_n\}$ the rank of the system $S_e.$

Denote $r\{\mathcal{L}\}=min \  r\{e_1, \dots, e_n\}$ as $\{e_1, \dots, e_n\}$ runs over all bases of $\mathcal{N}$.

Note that for a nilpotent Lie algebra $\mathcal{N}$ over an algebraically closed field the equality $dim \mathcal{T}_{max}=dim \mathcal{N} -r\{\mathcal{L}\}$ holds true \cite{Leger1}.

\begin{defn}
Derivations $d_1, d_2, \dots, d_t$ of a Lie algebra $\mathcal{L}$ are said to be nil-independent, if  the nilpotency of the derivation $\alpha_1 d_1 + \alpha_2 d_2 + \dots + \alpha_n d_n$ implies $\alpha_i=0, 1 \leq i \leq t$.
\end{defn}

There is an estimate of the codimension of the nilradical in a solvable Lie algebra as follows (see \cite{mubor}).
\begin{thm} \label{thm22} Let $\mathcal{R}$ be a solvable Lie algebra and $\mathcal{N}$ be its nilradical. Then the dimension of the subspase complementary to $\mathcal{N}$ is not greater than the  maximal number of nil-independent derivations of $\mathcal{N}$.
\end{thm}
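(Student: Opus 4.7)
The plan is to pick any vector-space complement $V$ of $\mathcal{N}$ in $\mathcal{R}$ with basis $v_1,\dots,v_k$, form the restrictions $D_i := \mathrm{ad}_{v_i}|_{\mathcal{N}}$, and show that $D_1,\dots,D_k$ are nil-independent derivations of $\mathcal{N}$; this will immediately yield $\dim V = k$ bounded above by the maximal number of nil-independent derivations of $\mathcal{N}$. Each $D_i$ is a well-defined derivation because $\mathcal{N}$ is an ideal of $\mathcal{R}$.

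Suppose toward a contradiction that $D := \alpha_1 D_1 + \cdots + \alpha_k D_k$ is nilpotent for some scalars $\alpha_i$ not all zero, and set $v := \sum_i \alpha_i v_i \in V$; then $v \neq 0$ and $\mathrm{ad}_v|_{\mathcal{N}} = D$ is nilpotent. The first ingredient I would use is that over a field of characteristic zero one has $[\mathcal{R},\mathcal{R}] \subseteq \mathcal{N}$, because $[\mathcal{R},\mathcal{R}]$ is a nilpotent ideal by Lie's theorem (in a basis realising Lie's theorem for the adjoint representation, $\mathrm{ad}([\mathcal{R},\mathcal{R}])$ is strictly upper triangular). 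Hence $\mathrm{ad}_v(\mathcal{R}) \subseteq [\mathcal{R},\mathcal{R}] \subseteq \mathcal{N}$, and iterating gives $\mathrm{ad}_v^{m+1}(\mathcal{R}) \subseteq \mathrm{ad}_v^{m}(\mathcal{N}) = 0$ for $m$ large enough, so $\mathrm{ad}_v$ is nilpotent on all of $\mathcal{R}$.

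To conclude, I would invoke the classical fact that in a solvable Lie algebra over $\mathbb{C}$ the nilradical coincides with the set of all ad-nilpotent elements: in a Lie-theorem basis for the adjoint representation, the elements $x$ with $\mathrm{ad}_x$ of zero diagonal form an ideal whose $\mathrm{ad}$-image is strictly upper triangular, hence this ideal is nilpotent and so contained in $\mathcal{N}$, while the reverse inclusion is immediate since for $n \in \mathcal{N}$ one has $\mathrm{ad}_n(\mathcal{R}) \subseteq \mathcal{N}$ and $\mathrm{ad}_n$ is nilpotent on $\mathcal{N}$. Applying this to $v$ yields $v \in \mathcal{N} \cap V = \{0\}$, contradicting $v \neq 0$. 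Therefore the $D_i$ are nil-independent and $\dim V$ is at most the maximal number of nil-independent derivations of $\mathcal{N}$. The main subtlety is precisely this passage from ``$\mathrm{ad}_v|_{\mathcal{N}}$ nilpotent'' to ``$v \in \mathcal{N}$'', which rests on the two characteristic-zero solvability ingredients $[\mathcal{R},\mathcal{R}] \subseteq \mathcal{N}$ and $\mathcal{N} = \{x \in \mathcal{R} : \mathrm{ad}_x \text{ is nilpotent}\}$; without them one can only guarantee that $v$ acts on $\mathcal{N}$ as a nilpotent derivation, not that it lies in $\mathcal{N}$.
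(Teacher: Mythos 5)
Your argument is correct and complete: the key reduction from ``$\mathrm{ad}_v|_{\mathcal{N}}$ nilpotent'' to ``$\mathrm{ad}_v$ nilpotent on $\mathcal{R}$'' via $[\mathcal{R},\mathcal{R}]\subseteq\mathcal{N}$, followed by the characterization of the nilradical of a solvable Lie algebra in characteristic zero as the set of ad-nilpotent elements, is exactly the standard route to Mubarakzjanov's bound. Note that the paper itself gives no proof of this statement --- it is quoted from Mubarakzjanov's work \cite{mubor} --- so there is no in-paper argument to compare against; your proof matches the classical one.
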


Recall that for a Lie algebra $\mathcal{L}$ and a $\mathcal{L}$-module $\mathcal{M}$ the notion of the group of cohomologies of $\mathcal{L}$ is given as follows. Consider the spaces
$$CL^0(\mathcal{L},\mathcal{M}):= \mathcal{M}, \quad CL^n(\mathcal{L},\mathcal{M})=\Hom(\wedge^{n} \mathcal{L}, \mathcal{M}), \ n > 0.$$
and coboundary operators $d^n : C^n(\mathcal{L},\mathcal{M}) \rightarrow C^{n+1}(\mathcal{L},\mathcal{M})$  ($\mathbb{F}$-homomorphisms) defined by
 $$(d^n\varphi)(x_1, \dots , x_{n+1}): = \sum\limits_{i=1}^{n+1}(-1)^{i+1}[x_i, \varphi(x_1,\dots, \widehat{x}_i, \dots , x_{n+1})]$$
$$+\sum\limits_{1\leq i<j\leq {n+1}}(-1)^{i+j}\varphi([x_i,x_j], x_1, \dots,\widehat{x}_i, \dots, \dots , \widehat{x}_j, \dots ,x_{n+1}),$$
where $\varphi\in C^n(\mathcal{L},\mathcal{M})$ and $x_i\in \mathcal{L}$. The property $d^{n+1}\circ d^n=0$ leads that the derivative operator $d:=\sum\limits_{i \geq 0}d^i$ satisfies the property
$d\circ d = 0$. Therefore, the $n$-th cohomology group is well defined by
$$H^n(\mathcal{L},\mathcal{M}): = Z^n(\mathcal{L},\mathcal{M})/ B^n(\mathcal{L},\mathcal{M}),$$
where the elements $Z^n(\mathcal{L},\mathcal{M}):=Ker d^{n+1}$ and $B^n(\mathcal{L},\mathcal{M}):=Imd^n$ are called {\it $n$-cocycles} and {\it $n$-coboundaries}, respectively.

The space $H(\mathcal{L},\mathcal{M})=\bigoplus\limits_{n>0}H^n(\mathcal{L},\mathcal{M})$ is called {\it the cohomology group of the Lie algebra $\mathcal{L}$ relative to the module $\mathcal{M}$}.

The computation of the cohomology groups is laborious, the Hochschild - Serre factorization theorem below much simplifies their computations for semidirect sums of algebras \cite{Leger2}.

\begin{thm} \label{thmSerre} Let $\mathcal{R} =\mathcal{N}\oplus \mathcal{Q}$ be a solvable Lie algebra such that $\mathcal{Q}$ is abelian and the operators $ad_x$ ($x\in \mathcal{Q}$) are diagonal. Then adjoint cohomology $H^p (\mathcal{R},\mathcal{R})$ satisfies the following isomorphism
\begin{equation}\label{eq222}
H^p(\mathcal{R},\mathcal{R})\simeq\sum\limits_{a+b=p}^{}H^a(\mathcal{Q},\mathbb{F})\otimes H^b(\mathcal{N},\mathcal{R})^{\mathcal{Q}},
\end{equation}
where
\begin{equation}\label{eq14}
H^b(\mathcal{N},\mathcal{R})^{\mathcal{Q}}=\{ \varphi\in H^b(\mathcal{N},\mathcal{R}) \ | \ (x.\varphi)=0,\ \ x\in \mathcal{Q}\}
\end{equation}
is the space of $\mathcal{Q}$--invariant cocycles of $\mathcal{N}$ with values in $\mathcal{R}$,
the invariance being defined by:
\begin{equation}\label{eq333}
(x.\varphi)(e_1,\ldots,e_b)=[x,\varphi(e_1,\ldots,e_b)]-
\sum\limits_{s=1}^{b}\varphi(e_1,\ldots,[x,e_s],\ldots,e_b).
\end{equation}
\end{thm}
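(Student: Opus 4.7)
The plan is to derive the claimed isomorphism from the Hochschild--Serre spectral sequence associated with the ideal $\mathcal{N} \trianglelefteq \mathcal{R}$ with coefficients in the adjoint module $\mathcal{R}$. First I would invoke the standard construction yielding
$$E_2^{p,q} = H^p(\mathcal{Q},\, H^q(\mathcal{N}, \mathcal{R})) \Longrightarrow H^{p+q}(\mathcal{R}, \mathcal{R}),$$
using the identification $\mathcal{R}/\mathcal{N} \cong \mathcal{Q}$ of Lie algebras, with $H^q(\mathcal{N}, \mathcal{R})$ carrying the natural $\mathcal{Q}$-module structure induced by the adjoint action of $\mathcal{Q}$ on $\mathcal{R}$ and on $\mathcal{N}$-cochains via formula \eqref{eq333}.

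Next I would exploit the diagonality hypothesis. Since $\mathcal{Q}$ is abelian and every $ad_x$ with $x \in \mathcal{Q}$ is diagonalisable on $\mathcal{R}$, the induced action on $\Hom(\wedge^q \mathcal{N}, \mathcal{R})$ is simultaneously diagonalisable, hence semisimple; therefore $H^q(\mathcal{N}, \mathcal{R})$ decomposes as a direct sum of weight spaces under $\mathcal{Q}$. For any semisimple module $M$ over an abelian Lie algebra $\mathcal{Q}$, a direct inspection of the complex $\Hom(\wedge^\ast \mathcal{Q}, M)$ shows that each non-zero weight component is acyclic: if $m \in M$ has weight $\lambda \neq 0$ and $x \in \mathcal{Q}$ satisfies $\lambda(x) \neq 0$, then interior multiplication by $x$ supplies an explicit contracting homotopy, so only the invariants $M^{\mathcal{Q}}$ contribute. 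This gives $H^p(\mathcal{Q}, M) \cong H^p(\mathcal{Q}, \mathbb{F}) \otimes M^{\mathcal{Q}}$, and substituting $M = H^q(\mathcal{N}, \mathcal{R})$ already puts $E_2^{p,q}$ into the form $H^p(\mathcal{Q}, \mathbb{F}) \otimes H^q(\mathcal{N}, \mathcal{R})^{\mathcal{Q}}$ required by the statement.

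The main obstacle is to show that the spectral sequence collapses at $E_2$ and that the resulting filtration on $H^p(\mathcal{R}, \mathcal{R})$ splits canonically as a direct sum, not merely as associated graded pieces. The vector-space splitting $\mathcal{R} = \mathcal{N} \oplus \mathcal{Q}$, with $\mathcal{Q}$ an abelian subalgebra, gives a bi-grading on $C^\ast(\mathcal{R}, \mathcal{R})$ via $\wedge^p \mathcal{R} \cong \bigoplus_{a+b=p} \wedge^a \mathcal{Q} \otimes \wedge^b \mathcal{N}$, compatible with the Hochschild--Serre filtration. The whole complex carries a semisimple $\mathcal{Q}$-action in every bi-degree, and each higher differential $d_r$ ($r \geq 2$) is $\mathcal{Q}$-equivariant; a weight argument then forces $d_r$ to vanish on the weight-zero part to which the $E_2$-cohomology is confined, and the same semisimplicity produces a $\mathcal{Q}$-equivariant splitting of the filtration. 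Combining collapse with this splitting yields the stated direct-sum decomposition \eqref{eq222}.
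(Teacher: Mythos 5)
The paper does not prove this statement at all: Theorem \ref{thmSerre} is quoted as a known result of Hochschild--Serre type with references to \cite{cohomology} and \cite{Leger2}, so there is no internal proof to compare yours with. Judged on its own merits, your sketch follows the standard route and the first two steps are sound: the identification of $E_2^{a,b}=H^a(\mathcal{Q},H^b(\mathcal{N},\mathcal{R}))$, the semisimplicity of the $\mathcal{Q}$-action on cochains (simultaneous diagonalisability), and the computation $H^a(\mathcal{Q},M)\cong H^a(\mathcal{Q},\mathbb{F})\otimes M^{\mathcal{Q}}$ for a semisimple module via the contracting homotopy $\lambda(x)^{-1}\iota_x$ coming from $d\iota_x+\iota_x d=\theta_x=\lambda(x)\cdot\mathrm{id}$ on a weight-$\lambda$ component are all correct.

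The gap is in the collapse argument. You write that the higher differentials $d_r$ are $\mathcal{Q}$-equivariant and that ``a weight argument then forces $d_r$ to vanish on the weight-zero part.'' This does not follow: after the $E_2$ identification, both source and target of $d_r$ are trivial (weight-zero) $\mathcal{Q}$-modules, and a $\mathcal{Q}$-equivariant linear map between trivial modules is an arbitrary linear map, so equivariance imposes no constraint whatsoever. The vanishing of the higher differentials, and the splitting of the filtration, must instead be read off from the structure of the complex. The standard repair makes the spectral sequence unnecessary: decompose the whole Chevalley--Eilenberg complex $C^{\ast}(\mathcal{R},\mathcal{R})$ into weight subcomplexes for the (semisimple, commuting) $\mathcal{Q}$-action; the nonzero-weight subcomplexes are acyclic by the same Cartan homotopy, so $H^{\ast}(\mathcal{R},\mathcal{R})$ is computed by the weight-zero (invariant) subcomplex. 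On that subcomplex, using $\wedge^{p}\mathcal{R}\cong\bigoplus_{a+b=p}\wedge^{a}\mathcal{Q}\otimes\wedge^{b}\mathcal{N}$, the component of $d$ raising the $\mathcal{Q}$-degree by one is exactly the operator $\varphi\mapsto x.\varphi$ of \eqref{eq333}, which vanishes on invariants, and there is no component raising it by two because $[\mathcal{Q},\mathcal{Q}]=0$; hence $d$ reduces to $1\otimes d_{\mathcal{N}}$ on $\wedge^{a}\mathcal{Q}^{\ast}\otimes C^{b}(\mathcal{N},\mathcal{R})^{\mathcal{Q}}$ and the direct-sum decomposition \eqref{eq222} follows at once (invariants commute with cohomology again by semisimplicity). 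With this substitution your argument becomes a complete proof; as written, the key step is asserted rather than established.
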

Observe that $H^a(\mathcal{Q},\mathbb{F})=\bigwedge ^a \mathcal{Q},$ hence $H^p(\mathcal{R},\mathcal{R})$ vanishes if and only if $H^b(\mathcal{N},\mathcal{R})^{\mathcal{Q}}=0$ for $0\leq b \leq p$.

\section{Description of some maximal solvable extensions of nilpotent Lie algebras}\label{sec3}

Let us present first an example which shows that in \v{S}nobl's conjecture the condition of the ground field to be $\mathbb{C}$ is essential.
P\begin{exam} Consider three-dimensional Heisenberg Lie algebra
\begin{equation}
\label{exm1} H_1: \quad [e_2,e_3]=-[e_3, e_2]=e_1.
\end{equation}
Due to Theorem \ref{mainthm} up to isomorphism there exists a unique complex solvable five-dimensional Lie algebra with the nilradical $H_1$ and the table of multiplications
\begin{equation}\label{exm2}
[e_2,e_3]=e_1,\ [e_2,x_1]=e_2,\ [e_1,x_1]=e_1,\ [e_3,x_2]=e_3,\ [e_1,x_2]=e_1.
\end{equation}

However, in the paper \cite{mubor} it was showed the existence of the following two non-isomorphic to each other real solvable Lie algebras:
$$\emph{g}_{5,36}: \ [e_2,e_3]=e_1,\ [e_1,x_1]=e_1,\ [e_2,x_1]=e_2,\ [e_2,x_2]=-e_2,\ [e_3,x_2]=e_3,$$
$$\emph{g}_{5,37}: \ [e_2,e_3]=e_1,\ [e_1,x_1]=2e_1,\ [e_2,x_1]=e_2,\ [e_3,x_1]=e_3,\ [e_2,x_2]=-e_3,\ [e_3,x_2]=e_2.$$
\end{exam}

Here are two non-isomorphic maximal solvable extensions of a given niradical from \cite{Gorbatsevich}.
\begin{exam} Let $\mathcal{R}_1$ and $\mathcal{R}_2$ be solvable Lie algebras with the following tables of multiplications:
$$\mathcal{R}_1: \ \left\{\begin{array}{lll}
[e_1,e_2]=e_3, & [e_2,e_3]=e_6, &\\[1mm]
[e_1,e_3]=e_4, & [e_2,e_4]=e_7, & \\[1mm]
[e_1,e_4]=e_5, & [e_2,e_5]=e_7, \\[1mm]
[e_1,e_5]=e_6, & [e_3,e_4]= -e_7, \\[1mm]
[e_1,e_6]=e_7, & [e_8,x]=e_8,\\[1mm]
\end{array}\right. \quad \mathcal{R}_2: \
\left\{\begin{array}{lll}
[e_1,e_2]=e_3, & [e_2,e_3]=e_6, & \\[1mm]
[e_1,e_3]=e_4, & [e_2,e_4]=e_7, \\[1mm]
[e_1,e_4]=e_5, & [e_2,e_5]=e_7, & \\[1mm]
[e_1,e_5]=e_6, & [e_3,e_4]= -e_7, \\[1mm]
[e_1,e_6]=e_7, & [e_2,x]=e_7,\\[1mm]
               & [e_8,x]=e_8.\\[1mm]     \end{array}\right.$$

It is easy to check that the solvable Lie algebras $\mathcal{R}_1$ and $\mathcal{R}_2$ have split nilradical, which is a direct sum of characteristically nilpotent ideal with the basis $\{e_1, \dots, e_7\}$ and one-dimensional ideal with the basis $\{e_8\}$. Clearly, the algebra $\mathcal{R}_1$ is the direct sum of $2$-dimensional solvable ideal $<x, e_8>$ and $7$-dimensional characteristically nilpotent ideal $\mathcal{N}_7=<e_1, \dots, e_7>$ (it is obtained by deriving $ad(x)$), while $\mathcal{R}_2$ is the extension corresponding to deriving $ad(x) + d$, where $d(e_2)=e_7$ is the outer derivation of $\mathcal{N}_7$. Due to $13=dimDer(\mathcal{R}_1) > dimDer(\mathcal{R}_2)=12$ these solvable Lie extensions are not isomorphic. Thus, we obtain that \v{S}nobl's conjecture is not true in general.
\end{exam}

In order to exclude the nilradicals for which there exist non-isomorphic maximal solvable extensions, we introduce the notion of pure non-characteristically nilpotent algebra.

\begin{defn} \label{pure} An nilpotent Lie algebra is called pure non-characteristically nilpotent algebra if either it is non-split and non-characteristic nilpotent or in  the case of split it does not contain characteristically nilpotent direct factors.
\end{defn}
For characteristically nilpotent Lie algebras we refer to the survey paper \cite{Ancochea111}.

So, starting from now all nilpotent Lie algebras are assumed pure non-characteristically nilpotent Lie algebras.

Since the square of a solvable Lie algebra is nilpotent, any solvable Lie algebra has non-trivial nilradical. Therefore, for a solvable Lie algebra $\mathcal{R}$ we have a decomposition $\mathcal{R}=\mathcal{N}\oplus \mathcal{Q}$, where $\mathcal{N}$ is the nilradical of $\mathcal{R}$ and $\mathcal{Q}$ is the subspace complementary to $\mathcal{N}$.

Due $[ad_x,ad_y]=ad_{[x,y]}$ for any $x,y\in \mathcal{R}$ the solvability of $\mathcal{R}$ is equivalent to the solvability of the subalgebra $ad(\mathcal{R})$ of $Der(\mathcal{N})$. Applying Lie's Theorem \cite{Jac} to the algebra $ad(\mathcal{R})$, we conclude that there exists a basis of $\mathcal{R}$ such that all operators ${ad_{z}}, \ z\in \mathcal{R}$ have upper-triangular matrix form. Particular, all operators $ad_{x|\mathcal{N}}$ $(x\in \mathcal{Q})$ in the basis of $\mathcal{N}$ have upper-triangular matrix form too. Note that the basis $\{e_1,\dots,e_k,e_{k+1},\dots,e_n\}$ of $\mathcal{N}$ (here $e_i, \ 1\leq i \leq k$ are generators of $\mathcal{N}$) can be chosen as follows: any non generator basic element is the right-normed word of the alphabet $\{e_1, \dots, e_k\}$. Such a basis of $\mathcal{N}$ is called {\it the natural basis}.

Further the basis $\{e_1, \dots, e_n$\} means the natural basis of $\mathcal{N}$.

In fact, according to \cite{Gant} for the operator $ad_{{x}|\mathcal{N}}, \ x\in \mathcal{Q}$ one has a decomposition $ad_{{x}|\mathcal{N}}=d_{0}+d_{1},$
where $d_{0}$ is diagonalizable and $d_{1}$ is nilpotent derivations of $\mathcal{N}$ such that $[d_{0}, d_{1}]=0$.

Let set
$$ad_{{x}|\mathcal{N}}=\left(\begin{array}{cccccccc}
\alpha_{1}&*&*&\dots &*&*\\[1mm]
0&\alpha_{2}&*&\dots &*&*\\[1mm]
0&0&\alpha_{3}&\dots &*&*\\[1mm]
\vdots&\vdots&\vdots&\ddots &\vdots&\vdots\\[1mm]
0&0&0&\dots &\alpha_{n-1}&*\\[1mm]
0&0&0&\dots &0&\alpha_{n}\\[1mm]
\end{array}\right).$$

Due to the upper-triangularity of $ad_{{x}|\mathcal{N}}$ the matrix of $d_{0}$ has the diagonal form
$d_{0}=diag(\alpha_{1},  \dots, \alpha_{n}).$

The Leibniz's rule for derivation $d_0$ implies the restrictions on diagonal elements $\alpha_{i}$ with $1\leq i\leq n:$
$$\sum c_{i,j}^t\alpha_{t}e_t=\sum c_{i,j}^td_{0}(e_t)=d_{0}(\sum c_{i,j}^te_t)=d_{0}([e_i,e_j])=[d_{0}(e_i),e_j]$$
$$+[e_i,d_{0}(e_j)]=\alpha_{i}[e_i,e_j]+\alpha_{j}[e_i,e_j]=(\alpha_{i}+\alpha_{j})[e_i,e_j]=
(\alpha_{i}+\alpha_{j})\sum c_{i,j}^te_t.$$

Thus, for $i, j, t$ such that $c_{i,j}^t\neq 0$ we obtain the system of equations:
$$S_e: \quad \left\{\alpha_{i}+\alpha_{j}=\alpha_{t}.\right.$$

Now and onward we regard a solvable Lie algebra $\mathcal{R}$ as $\mathcal{R}=\mathcal{N}\oplus \mathcal{Q}$, the vector space decomposition such that the codimesion of $\mathcal{N}$ is maximal, i.e., $\mathcal{R}$ is the maximal solvable extension of the nilpotent Lie algebra $\mathcal{N}$.

\begin{lem} \label{lem111} Let $\mathcal{R}$ be the maximal solvable extension of the nilpotent Lie algebra $\mathcal{N}$. Then
$$r\{\mathcal{N}\}=r\{e_1, \dots, e_n\}=n-dim \mathcal{Q}.$$
\end{lem}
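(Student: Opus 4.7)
The plan is to prove the two equalities separately, first pinning down $\dim \mathcal{Q}$ via Mubarakzjanov's bound (Theorem~\ref{thm22}) and the maximal torus formula, then exploiting the natural (upper-triangularizing) basis to identify the rank of $S_e$ for this concrete basis with the global invariant $r\{\mathcal{N}\}$.

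First I would establish $r\{\mathcal{N}\} = n - \dim\mathcal{Q}$. By Theorem~\ref{thm22}, $\dim\mathcal{Q}$ is at most the maximal number of nil-independent derivations of $\mathcal{N}$, and by the Leger--Togo-type result quoted in Section~2 this number equals $\dim\mathcal{T}_{\max} = n - r\{\mathcal{N}\}$. For the reverse inequality, the construction $R_{\mathcal{T}_{\max}} = \mathcal{N} \oplus \mathcal{T}_{\max}$ recalled at the start of Section~2 is already a solvable extension of $\mathcal{N}$ whose complement has dimension $\dim\mathcal{T}_{\max}$; by the assumed maximality of $\mathcal{R}$, $\dim\mathcal{Q} \geq \dim\mathcal{T}_{\max} = n - r\{\mathcal{N}\}$. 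Combining yields the first equality.

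Next, to prove $r\{e_1,\dots,e_n\} = n - \dim\mathcal{Q}$ for the natural basis, pick a basis $x_1,\dots,x_k$ of $\mathcal{Q}$ with $k = \dim\mathcal{Q}$ and consider the Jordan decompositions $ad_{x_i}|_{\mathcal{N}} = d_{i,0} + d_{i,1}$ derived in the paragraphs preceding the lemma, where each $d_{i,0}$ is diagonal in the natural basis with entries $(\alpha_1^{(i)},\dots,\alpha_n^{(i)})$. As shown in the excerpt, every such tuple satisfies the linear system $S_e$. The crucial step is to show that these $k$ tuples are linearly independent over $\mathbb{C}$: if $\sum_i \beta_i d_{i,0} = 0$, then $\sum_i \beta_i\, ad_{x_i}|_{\mathcal{N}} = \sum_i \beta_i d_{i,1}$ is a sum of strictly upper-triangular matrices (in the natural basis), hence nilpotent; by nil-independence of the derivations $ad_{x_i}|_{\mathcal{N}}$ (which follows from the dimension count $\dim\mathcal{Q} = n - r\{\mathcal{N}\}$ matching the maximal nil-independent count) all $\beta_i$ vanish. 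Consequently the solution space of $S_e$ has dimension at least $k$, so $n - r\{e_1,\dots,e_n\} \geq k = n - r\{\mathcal{N}\}$, i.e.\ $r\{e_1,\dots,e_n\} \leq r\{\mathcal{N}\}$. The opposite inequality is immediate from the definition of $r\{\mathcal{N}\}$ as the minimum over all bases, so equality holds.

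The main obstacle is the linear-independence step for the diagonal parts $d_{i,0}$. The argument above exploits a pleasant feature of the natural basis (the nilpotent parts $d_{i,1}$ are automatically strictly upper-triangular, so any linear combination remains nilpotent regardless of whether the $d_{i,1}$ commute), which sidesteps the delicate general question of how Jordan components behave under sums of non-commuting operators. The only remaining point to check carefully is that in the maximal extension a basis of $\mathcal{Q}$ does give nil-independent restrictions to $\mathcal{N}$; this is essentially forced by the equality $\dim\mathcal{Q} =$ maximal number of nil-independent derivations already obtained in the first step of the proof.
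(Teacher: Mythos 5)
Your proposal is correct in substance and assembles exactly the same ingredients as the paper's proof: the competing extension $R_{\mathcal{T}_{max}}=\mathcal{N}\oplus\mathcal{T}_{max}$ together with maximality of $\mathcal{R}$ gives $\dim\mathcal{Q}\geq\dim\mathcal{T}_{max}=n-r\{\mathcal{N}\}$, while the decomposition $ad_{{x_i}|\mathcal{N}}=d_{i,0}+d_{i,1}$ in the triangularizing basis, with the diagonal parts solving $S_e$ and being linearly independent, gives the reverse inequalities. The paper packages this as a three-case argument by contradiction; you run a direct chain of inequalities, but the mathematics is the same.

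Two of your justifications are off, though only one is load-bearing. First, the result quoted in Section 2 is only $\dim\mathcal{T}_{max}=n-r\{\mathcal{N}\}$; it does not assert that the maximal number of nil-independent derivations equals $\dim\mathcal{T}_{max}$, so your first step appeals to a fact the paper never states. This is harmless, because the step is not needed: your second step already yields $\dim\mathcal{Q}\leq n-r\{e_1,\dots,e_n\}\leq n-r\{\mathcal{N}\}$, which closes the chain against the lower bound coming from maximality. Second, and more seriously, the nil-independence of $ad_{{x_1}|\mathcal{N}},\dots,ad_{{x_k}|\mathcal{N}}$ does not follow from ``the dimension count matching the maximal nil-independent count'': even if $\dim\mathcal{Q}$ equals the maximal number of nil-independent derivations, nothing forces these particular derivations to be nil-independent (a priori some $x\in\mathcal{Q}$ could act nilpotently on $\mathcal{N}$), and as written the justification is also circular, since it invokes the conclusion of your first step inside your second. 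The correct justification is the standard fact underlying the proof of Theorem \ref{thm22}: if $\sum_i\beta_i\, ad_{{x_i}|\mathcal{N}}$ were nilpotent with some $\beta_i\neq 0$, then $\mathcal{N}+\mathbb{C}\sum_i\beta_i x_i$ would be a nilpotent ideal of $\mathcal{R}$ properly containing the nilradical. With that repair (which the paper itself uses, equally tersely, e.g.\ when justifying \eqref{eq2}), your argument is complete.
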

\begin{proof} Let $dim \mathcal{Q}=s$. Assume that $r\{\mathcal{N}\}\neq r\{e_1, \dots, e_n\}$. Then there exists a basis $\{f_1, \dots, f_n\}$ of $\mathcal{N}$ such that $r\{f_1, \dots, f_n\}< r\{e_1, \dots, e_n\}$ and the solutions to the system $S_f: \left\{\beta_{i}+\beta_{j}=\beta_{t}\right.$, depend on $p\ (p>s)$ free variables, say $\beta_1, \dots, \beta_p$. We get the fundamental solutions of $S_f$:
$(\beta_{1,1}, \beta_{2,1}, \dots, \beta_{n,1}), (\beta_{1,2}, \beta_{2,2}, \dots, \beta_{n,2}), \ldots, (\beta_{1,p}, \beta_{2,p}, \dots, \beta_{n,p}),$
with
$$det\left(\begin{array}{cccccc}
\beta_{1,1}&\beta_{2,1}&\dots&\beta_{p,1}\\[1mm]
\beta_{1,2}&\beta_{2,2}&\dots&\beta_{p,2}\\[1mm]
\vdots&\vdots&\ddots&\vdots\\[1mm]
\beta_{1,p}&\beta_{2,p}&\dots&\beta_{p,p}\\[1mm]
\end{array}\right)\neq 0.$$

Now we consider the solvable Lie algebra $R_{\mathcal{T}}=\mathcal{N}\oplus \mathcal{T}$ with
$$\mathcal{T}=Span\{diag(\beta_{1,1}, \beta_{2,1}, \dots, \beta_{n,1}), diag(\beta_{1,2}, \beta_{2,2}, \dots, \beta_{n,2}), \dots, diag(\beta_{1,p}, \beta_{2,p}, \dots, \beta_{n,p})\}.$$
Since $dim \mathcal{R}_{\mathcal{T}} > dim \mathcal{R}$ we get a contradiction with the assumption that $r\{\mathcal{N}\}\neq r\{e_1, \dots, e_n\}$.

Let us now assume that $r\{e_1, \dots, e_n\}\neq n-s$. This implies that the solutions to the system $S_e$ depend on $q \ (q\neq s)$ free parameters, say $\alpha_1, \alpha_2, \dots, \alpha_q$.

If $q>s$, then applying the same arguments as used above we get a contradiction with the maximality of $\mathcal{R}$.

If $q<s$, then similarly to that above we get diagonal nil-independent derivations $D_1, \dots, D_q$ such that $ad_{{x}|\mathcal{N}}-\sum\limits_{i=1}^{q}\gamma_iD_i$ is nilpotent for some $\gamma_i\in \mathbb{C}$.
Then Theorem \ref{thm22} implies that $dim \mathcal{Q}=s \leq q$, which is a contradiction.
\end{proof}

Taking into account the result of \cite{Leger1} and Lemma \ref{lem111} we derive $dim \mathcal{Q}=dim \mathcal{T}_{max}.$

We denote the free parameters in the solutions to the system $S_e$ by $\alpha_{i_1},\dots, \alpha_{i_s}$. Making renumeration the basis elements of $\mathcal{N}$ we can assume that $\alpha_{1},\dots, \alpha_{s}$ are free parameters of $S_e$. Then we get
\begin{equation}\label{eq1}
\alpha_{i}=\sum_{j=1}^{s}\lambda_{i,j}\alpha_{j},\ s+1\leq i\leq n.
\end{equation}

Let $\mathcal{Q}=Span_\mathbb{C}\{x_1,\dots,x_s\}$. Consider
$$ad_{x_i}=\left(\begin{array}{cccccccc}
\alpha_{1,i}&*&*&\dots &*&*\\[1mm]
0&\alpha_{2,i}&*&\dots &*&*\\[1mm]
0&0&\alpha_{3,i}&\dots &*&*\\[1mm]
\vdots&\vdots&\vdots&\ddots &\vdots&\vdots\\[1mm]
0&0&0&\dots &\alpha_{n-1,i}&*\\[1mm]
0&0&0&\dots &0&\alpha_{n,i}\\[1mm]
\end{array}\right), \quad 1\leq i\leq s.$$

Similarly, we have $ad_{{x_i}|\mathcal{N}}=d_{0,i}+d_{1,i},$
where $d_{0,i}=diag(\alpha_{1,i},  \dots, \alpha_{n,i})$ and $d_{1,i}$ is nilpotent derivation of $N$ such that $[d_{0,i}, d_{1,i}]=0$.

Without loss of generality, from the proof of Lemma \ref{lem111} we conclude that the vectors
$(\alpha_{1,i},  \dots, \alpha_{n,i}), \ 1\leq i \leq s$ form a basis of the fundamental solutions to the system $S_e$.

\begin{exam}\label{counter}
Consider 11-dimensional nilpotent Lie algebra $\mathcal{N}$ with the following non-zero products:
\begin{align*}
[e_2, e_1]&=e_8,   & [e_1, e_4]&=e_{10}, &[e_5,e_7]&=e_{10}, &[e_1, e_3]&=e_{9},\\
[e_5, e_6]&=e_{9}, & [e_4, e_3]&=e_{11}, &[e_7,e_6]&=e_{11}, &[e_2, e_3]&=e_{10}.
\end{align*}
One can check that a derivation of $\mathcal{N}$ has the following matrix form:
$$\left(\begin{smallmatrix}
 a_{1,1} & a_{1,2} & a_{1,3} & a_{1,4} & 0 & 0 & 0 & a_{1,8} & a_{1,9} & a_{1,10} & a_{1,11} \\
 0 & a_{2,2} & 0 & -a_{1,3} & 0 & 0 & 0 & a_{2,8} & a_{2,9} & a_{2,10} & a_{2,11} \\
 0 & 0 & a_{3,3} & a_{3,4} & 0 & 0 & 0 & a_{3,8} & a_{3,9} & a_{3,10} & a_{3,11} \\
 0 & 0 & 0 & a_{2,2}+a_{3,3}-a_{1,1} & 0 & 0 & 0 & a_{4,8} & a_{4,9} & a_{4,10} & a_{4,11} \\
 0 & 0 & 0 & 0 & a_{1,1} & a_{1,3} & a_{1,4} & a_{5,8} & a_{5,9} & a_{5,10} & a_{5,11} \\
 0 & 0 & 0 & 0 & 0 & a_{3,3} & a_{1,2}+a_{3,4} & a_{6,8} & a_{6,9} & a_{6,10} & a_{6,11} \\
 0 & 0 & 0 & 0 & 0 & 0 & a_{2,2}+a_{3,3}-a_{1,1} & a_{7,8} & a_{7,9} & a_{7,10} & a_{7,11} \\
 0 & 0 & 0 & 0 & 0 & 0 & 0 & a_{1,1}+a_{2,2} & -2 a_{1,3} & 0 & 0 \\
 0 & 0 & 0 & 0 & 0 & 0 & 0 & 0 & a_{1,1}+a_{3,3} & 0 & a_{1,3} \\
 0 & 0 & 0 & 0 & 0 & 0 & 0 & 0 & 0 & a_{2,2}+a_{3,3} & a_{2,3} \\
 0 & 0 & 0 & 0 & 0 & 0 & 0 & 0 & 0 & 0 & a_{2,2}+2 a_{3,3}-a_{1,1} \\
\end{smallmatrix}\right).$$
Let $\mathcal{R}=\mathcal{N}\oplus \mathcal{Q}$ be a maximal solvable extension of $\mathcal{N}.$ Then, $\dim \mathcal{Q}=3$ and direct computations show that $\mathcal{Q}$ admits a basis $\{x_1,x_2,x_3\}$  such that $[\mathcal{Q}, \mathcal{Q}]=0$ and
$$\begin{array}{lll}
ad_{{x_1}|\mathcal{N}}=diag(1, \ 0, \ 0,    -1, \ 1, \ 0,    -1, \ 1, \ 1, \ 0,    -1),\\[1mm]
ad_{{x_2}|\mathcal{N}}=diag(0, \ 1, \ 0, \ \ 1, \ 0, \ 0, \ \ 1, \ 1, \ 0, \ 1, \ \ 1), \\[1mm]
ad_{{x_3}|\mathcal{N}}=diag(0, \ 0, \ 1, \ \ 1, \ 0, \ 1, \ \ 1, \ 0, \ 1, \ 1, \ \ 2). \\[1mm]
\end{array}$$
Note that $\alpha_{4, i}=\alpha_{7,i}$ for any $i=1,2,3$.
\end{exam}

Let $\mathcal{R}=\mathcal{N}\oplus \mathcal{Q}$ be a maximal solvable extension of non-split  finite-dimensional nilpotent Lie algebra $\mathcal{N}$. Let $\{e_1,\dots, e_k, \dots,e_n\}$ be a natural basis of $\mathcal{N}$ (here $k$ is number of generator elements) and $\{x_1, \dots, x_s\}$ be a basis of $\mathcal{Q}$ such that $ad_{{x_i}|\mathcal{N}}$ have upper-triangular matrix forms for any $i\in \{1, \dots, s\}$, that is, for any $i\in\{1, \dots, n\}$ and $j\in \{1, \dots, s\}$ we one can write $$ad_{{x_j}|\mathcal{N}}(e_{i})=a_{i,j}e_{i}+\sum_{t=i+1}^{n}a_{i,j}^{t}e_{t}.$$

We have the following possible two cases:
\begin{itemize}
\item[1.] for any $i, j\in\{s+1,\dots, k\}, \ i\neq j$ there exists $t\in\{1,\dots, s\}$ such that
$\alpha_{i,t} \neq \alpha_{j,t}$ (see examples in Section 5);
\item[2.] there exist $i, j\in\{s+1,\dots, k\}, \ i\neq j$ such that $\alpha_{i,t} = \alpha_{j,t}$ for any $t\in\{1,\dots, s\}$ (see Example \ref{counter}).
\end{itemize}

Further we shall focus on a maximal solvable extension $\mathcal{R}=\mathcal{N}\oplus \mathcal{Q}$ of an nilpotent Lie algebra $\mathcal{N}$ with satisfy the following condition:
\begin{itemize}
\item[{\bf A})] for any $i, j\in\{s+1,\dots, k\}, \ i\neq j$ there exists $t\in\{1,\dots, s\}$ such that
$\alpha_{i,t} \neq \alpha_{j,t}.$
\end{itemize}

In order to prove the main result of this section we need the following auxiliary lemmas.

\begin{lem}\label{lem5} Let $\{u, y_1, \dots, y_{\eta}, f_1, \dots, f_{\tau}\}$ be a set of linearly independent elements of a solvable Lie algebra with the products:
$$\left\{\begin{array}{ll}
[u,y_j]=\sum\limits_{t=1}^{\tau}\beta_{j,t}f_{t},& 1\leq j\leq \eta,\\[1mm]
[f_t,y_j]=\mu_{t,j}f_t,& 1\leq j\leq \eta,\ 1\leq t\leq \tau,\\[1mm]
[u,[y_i,y_j]]=0, & 1\leq i,j\leq \eta,\\[1mm]
\end{array}\right.$$
and the coefficients $\mu_{t,j}$ satisfy the conditions:

\begin{itemize}
  \item for any $t \ (1\leq t\leq \tau)$ there exists $i \ (1\leq i \leq \eta)$ such that $\mu_{t,i}\neq0;$

\item for any $t \ (1\leq t\leq \tau)$ there exist $i,j \ \ (1\leq i\neq j \leq \eta)$ such that $\mu_{t,j}\neq \mu_{t,i}.$
    \end{itemize}
Then $$[u,y_j]=0,\ 1\leq j\leq \eta.$$
\end{lem}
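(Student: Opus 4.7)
The plan is to apply the Jacobi identity to the triples $(u,y_j,y_i)$ so as to extract a bilinear system in the $\beta_{j,t}$'s and $\mu_{t,i}$'s, and then to collapse that system using the two hypotheses on the $\mu$'s. Concretely, the identity
\[
[[u,y_j],y_i]-[[u,y_i],y_j]=[u,[y_j,y_i]]
\]
combined with the given vanishing $[u,[y_i,y_j]]=0$ gives $[[u,y_j],y_i]=[[u,y_i],y_j]$. Expanding each side with $[u,y_j]=\sum_t\beta_{j,t}f_t$ and $[f_t,y_i]=\mu_{t,i}f_t$, and using linear independence of $\{f_1,\ldots,f_\tau\}$, one obtains the key identity
\[
\beta_{j,t}\,\mu_{t,i}=\beta_{i,t}\,\mu_{t,j}\qquad(1\le i,j\le\eta,\ 1\le t\le\tau). \tag{$\ast$}
\]

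Next, fix $t$. By hypothesis the first bullet supplies $i_0$ with $\mu_{t,i_0}\neq 0$; substituting $i=i_0$ into $(\ast)$ rearranges to
\[
\beta_{j,t}=c_t\,\mu_{t,j}\quad\text{for all }j,\qquad c_t:=\beta_{i_0,t}/\mu_{t,i_0},
\]
so the vector $(\beta_{1,t},\ldots,\beta_{\eta,t})$ is a scalar multiple of $(\mu_{t,1},\ldots,\mu_{t,\eta})$. This uses only the first condition on the $\mu$'s.

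The remaining task is to conclude $c_t=0$ for every $t$, which is where the non-constancy hypothesis (the second bullet) must enter. The proportionality $\beta_{j,t}=c_t\mu_{t,j}$ is exactly $[u,y_j]=[\sum_s c_s f_s,\,y_j]$ for every $j$, so the element $u':=u-\sum_s c_s f_s$ centralizes every $y_j$; since $\{u,f_1,\ldots,f_\tau\}$ is linearly independent, $u'\neq 0$ whenever some $c_t\neq 0$. I would close the argument by showing that the distinct eigenvalues guaranteed by the second bullet are incompatible with the existence of such a nontrivial common centralizer inside $\mathrm{span}\{u,f_1,\ldots,f_\tau\}$ (the $f_t$-part of $u'$ is already forced to be zero by the first bullet, so a nonzero $u'$ would sit off the $f_t$-subspace, contradicting the weight structure on $\{y_j\}$ imposed by non-constancy), forcing every $c_t$, and hence every $\beta_{j,t}$, to vanish. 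The main obstacle is precisely this last step: the Jacobi step $(\ast)$ and the proportionality are routine, whereas extracting the full strength of the non-constancy hypothesis --- in order to rule out the nontrivial coboundary-like element $\sum_s c_s f_s$ --- is the delicate part of the proof.
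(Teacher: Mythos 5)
Your steps (1)--(3) are correct, and they already capture the entire mathematical content of the paper's proof by a cleaner route: the paper reaches the same conclusion through an induction on $h$, correcting $u$ by an element of $\mathrm{span}\{f_1,\dots,f_\tau\}$ at each stage and re-running the Jacobi identities, whereas your relation $(\ast)$ together with the first condition on the $\mu_{t,j}$ gives the proportionality $\beta_{j,t}=c_t\mu_{t,j}$ in one shot, so that the single substitution $u\mapsto u'=u-\sum_s c_s f_s$ kills all the brackets simultaneously. (Incidentally, neither your argument nor the paper's uses the second condition on the $\mu_{t,j}$ here; it is carried along for the companion Lemma \ref{lem6}.)

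The step you are stuck on --- forcing $c_t=0$ so that the \emph{original} $u$ satisfies $[u,y_j]=0$ --- cannot be completed, because that stronger claim is false. Take $\tau=1$, $\eta=2$ and the four-dimensional solvable Lie algebra spanned by $u,y_1,y_2,f_1$ whose only nonzero brackets are $[u,y_2]=f_1$ and $[f_1,y_2]=f_1$ (so $\mu_{1,1}=0$, $\mu_{1,2}=1$). Every hypothesis of the lemma holds, the relation $(\ast)$ only forces $\beta_{1,1}=0$, and yet $[u,y_2]=f_1\neq0$; here $c_1=1$ and it is $u'=u-f_1$ that centralizes $y_1,y_2$. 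In particular there is no ``weight-structure contradiction'' to be extracted: a nonzero $u'$ off the $f$-subspace can perfectly well commute with all the $y_j$. The resolution is that the lemma is a normalization statement. The paper's own proof repeatedly writes ``thus, we can assume that'', i.e.\ it silently replaces $u$ by the corrected element, and every application of the lemma in the paper occurs where such a modification of $u$ by a vector in $\mathrm{span}\{f_1,\dots,f_\tau\}$ is an admissible change of basis. Read this way, your proof is already complete at the end of step (3), and your worry in step (4) is precisely the observation that the statement, taken literally for the original $u$, is not what is being proved.
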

\begin{proof} Let us take the change
$$u'=u-\sum\limits_{t=1,\ \mu_{t,1}\neq 0}^{\tau}\frac{\beta_{1,t}}{\mu_{t,1}}f_t.$$
Then we derive
$$[u',y_1]=\sum\limits_{t=1}^{\tau}\beta_{1,t}f_{t}-
\sum\limits_{t=1,\ \mu_{t,1}\neq 0}^{\tau}\frac{\beta_{1,t}}{\mu_{t,1}}\mu_{t,1}f_t=\sum\limits_{t=1}^{\tau}\beta_{1,t}f_{t}-\sum\limits_{t=1,\ \mu_{t,1}\neq 0}^{\tau}\beta_{1,t}f_t=\sum\limits_{t=1}^{\tau}\delta_{\mu_{t,1},0}\beta_{1,t}f_{t},$$
where $\delta_{-,-}$ is the Kronecker delta. Thus, we can assume that
$[u,y_1]=\sum\limits_{t=1}^{\tau}\delta_{\mu_{t,1},0}\beta_{1,t}f_{t}.$

For $2\leq j\leq \eta$ the following chain of equalities
$$0=[u,[y_1,y_j]]=[[u,y_1],y_j]-[[u,y_j],y_1]$$
$$=[\sum\limits_{t=1}^{\tau}\delta_{\mu_{t,1},0}\beta_{1,t}f_{t},y_j]-[\sum\limits_{t=1}^{\tau}\beta_{j,t}f_{t},y_1]
=\sum\limits_{t=1}^{\tau}\delta_{\mu_{t,1},0}\mu_{t,j}\beta_{1,t}f_{t}-
\sum\limits_{t=1}^{\tau}\mu_{t,1}\beta_{j,t}f_{t},$$
implies
$$\delta_{\mu_{t,1},0}\mu_{t,j}\beta_{1,t}=0,\ \ \mu_{t,1}\beta_{j,t}=0,\ \ 2\leq j\leq \eta,\ 1\leq t\leq \tau.$$
Taking into account the conditions of the lemma on $\mu_{t,j}$ we obtain
$$\delta_{\mu_{t,1},0}\beta_{1,t}=0,\ \ \mu_{t,1}\beta_{j,t}=0,\ \ \ 2\leq j\leq \eta,\ 1\leq t\leq \tau. $$
Hence,
\begin{equation}\label{eq33}
[u,y_1]=0,\quad \quad [u,y_j]=\sum\limits_{t=1}^{\tau}\delta_{\mu_{t,1},0}\beta_{j,t}f_{t},\ 2\leq j\leq \eta.
\end{equation}

The validity of the following
\begin{equation}\label{eq34}
[u,y_j]=0,\ 1\leq j\leq h, \quad [u,y_j]=\sum\limits_{t=1}^{\tau}\prod_{q=1}^{h}\delta_{\mu_{t,q},0}\beta_{j,t}f_{t},\ h+1\leq j\leq \eta
\end{equation}
we prove by induction on $h.$

The base of the induction is clear due to \eqref{eq33}. Assume that \eqref{eq34} are true for $h$ we prove it for $h+1.$

Consider the change
$$u'=u-\sum\limits_{t=1,\ \mu_{t,h+1}\neq 0}^{\tau}\frac{\prod_{q=1}^{h}\delta_{\mu_{t,q},0}\beta_{h+1,t}}{\mu_{t,h+1}}f_t.$$
Then due to
$$[u',y_j]=
0,\ 1\le j\leq h,$$
$$[u',y_{h+1}]=
\sum\limits_{t=1}^{\tau}\prod_{q=1}^{h}\delta_{\mu_{t,q},0}\beta_{h+1,t}f_{t}-
\sum\limits_{t=1,\ \mu_{t,h+1}\neq 0}^{\tau}\frac{\prod_{q=1}^{h}\delta_{\mu_{t,q},0}\beta_{h+1,t}}{\mu_{t,h+1}}\mu_{t,h+1}f_t$$
$$=\sum\limits_{t=1}^{\tau}\prod_{q=1}^{h}\delta_{\mu_{t,q},0}\beta_{h+1,t}f_{t}-
\sum\limits_{t=1,\ \mu_{t,h+1}\neq 0}^{\tau}\prod_{q=1}^{h}\delta_{\mu_{t,q},0}\beta_{h+1,t}f_t=
\sum\limits_{t=1}^{\tau}\prod_{q=1}^{h+1}\delta_{\mu_{t,q},0}\beta_{h+1,t}f_{t},$$
one can assume that
$$[u,y_j]=0,\ 1\le j\leq h, \quad  [u,y_{h+1}]=\sum\limits_{t=1}^{\tau}\prod_{q=1}^{h+1}\delta_{\mu_{t,q},0}\beta_{h+1,t}f_{t}.$$

The conditions on $\mu_{t,j}$ and the following chain of equalities with $h+2\leq j\leq \eta$:
$$0=[u,[y_{h+1},y_j]]=[[u,y_{h+1}],y_j]-[[u,y_j],y_{h+1}]=
[\sum\limits_{t=1}^{\tau}\prod_{q=1}^{h+1}\delta_{\mu_{t,q},0}\beta_{h+1,t}f_{t},y_j]$$
$$-[\sum\limits_{t=1}^{\tau}\prod_{q=1}^{h}\delta_{\mu_{t,q},0}\beta_{j,t}f_{t},y_{h+1}]=
\sum\limits_{t=1}^{\tau}\prod_{q=1}^{h+1}\delta_{\mu_{t,q},0}\beta_{h+1,t}\mu_{t,j}f_{t}-
\sum\limits_{t=1}^{\tau}\prod_{q=1}^{h}\delta_{\mu_{t,q},0}\beta_{j,t}\mu_{t,h+1}f_{t}$$
imply
$$\prod_{q=1}^{h+1}\delta_{\mu_{t,q},0}\beta_{h+1,t}=0,\ \ \prod_{q=1}^{h}\delta_{\mu_{t,q},0}\beta_{j,t}\mu_{t,h+1}=0,\ \ h+2\leq j\leq \eta,\ 1\leq t\leq \tau.$$

Hence, we get
$$[u,y_{h+1}]=0, \quad  [u,y_j]=\sum\limits_{t=1}^{\tau}\prod_{q=1}^{h+1}\delta_{\mu_{t,q},0}\beta_{j,t}f_{t},\ h+2\leq j\leq \eta.$$

Thus, we obtain the products \eqref{eq34} for $h+1$. Taking the last possible value of $h=\eta$ in \eqref{eq34} we complete the proof.
\end{proof}

\begin{lem}\label{lem6} Let $\{e_1, \dots, e_{s}, x_1, \dots, x_{s}, f_1, \dots, f_{\tau}\}$
be elements of a basis of a solvable Lie algebra with the products:
$$\left\{\begin{array}{lll}
[e_i,x_i]=e_i+\sum\limits_{t=1}^{\tau}\beta_{i,i}^tf_{t},& 1\leq i\leq s,\\[1mm]
[e_i,x_j]=\sum\limits_{t=1}^{\tau}\beta_{i,j}^tf_{t},& 1\leq i\neq j\leq s,\\[1mm]
[f_t,x_j]=\mu_{t,j}f_t,& 1\leq j\leq s,\ 1\leq t\leq \tau,\\[1mm]
[e_i,[x_j,x_k]]=0, & 1\leq i,j,k \leq s,\\[1mm]
\end{array}\right.$$
where the structure constants $\mu_{t,j}$ satisfy the same conditions as in Lemma \ref{lem5}.

Then
$$[e_i,x_i]=e_i,\ 1\leq i\leq s,\quad [e_i,x_j]=0,\ 1\leq i\neq j\leq s.$$
\end{lem}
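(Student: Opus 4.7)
The plan is to extract algebraic identities from the Jacobi relations $[e_i,[x_j,x_k]]=0$ and then to absorb the unwanted $f_t$-contributions via a suitable basis change of the $e_i$'s.

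Fix $i\in\{1,\ldots,s\}$. First, I would expand $[e_i,[x_i,x_j]]=0$ for $j\ne i$ using the given products and the eigenvector relation $[f_t,x_j]=\mu_{t,j}f_t$; comparing coefficients of $f_t$ yields, for every $t$,
\[
\beta_{i,j}^t(1-\mu_{t,i})+\beta_{i,i}^t\mu_{t,j}=0. \qquad (\star)
\]
The parallel expansion of $[e_i,[x_j,x_k]]=0$ for distinct $j,k\ne i$ produces
\[
\beta_{i,j}^t\mu_{t,k}=\beta_{i,k}^t\mu_{t,j}. \qquad (\star\star)
\]

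The next step is to combine $(\star)$ and $(\star\star)$ with the two conditions on $\mu_{t,j}$ inherited from Lemma~\ref{lem5} in order to exhibit, for each $t$, a single scalar $\lambda_t^{(i)}$ satisfying $\beta_{i,i}^t=\lambda_t^{(i)}(\mu_{t,i}-1)$ and $\beta_{i,j}^t=\lambda_t^{(i)}\mu_{t,j}$ for all $j\ne i$. When $\mu_{t,i}\ne 1$ this is immediate from $(\star)$ with $\lambda_t^{(i)}:=\beta_{i,i}^t/(\mu_{t,i}-1)$. When $\mu_{t,i}=1$, the $\mu$-conditions produce some $j_0\ne i$ with $\mu_{t,j_0}\ne 0$; then $(\star)$ forces $\beta_{i,i}^t=0$, while $(\star\star)$ forces the ratio $\beta_{i,j}^t/\mu_{t,j}$ to be independent of $j$ where the denominator is nonzero, the vanishing $\beta_{i,j}^t=0$ being automatic when $\mu_{t,j}=0$.

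Once this uniform description is in hand, I would perform the change of basis $e_i':=e_i-\sum_{t=1}^{\tau}\lambda_t^{(i)}f_t$, carried out independently for each $i$. A direct calculation using $[f_t,x_j]=\mu_{t,j}f_t$ and the identities $\beta_{i,i}^t=\lambda_t^{(i)}(\mu_{t,i}-1)$, $\beta_{i,j}^t=\lambda_t^{(i)}\mu_{t,j}$ then yields $[e_i',x_i]=e_i'$ and $[e_i',x_j]=0$ for $j\ne i$, which is precisely the desired conclusion. The auxiliary relations $[f_t,x_j]=\mu_{t,j}f_t$ and $[e_i',[x_j,x_k]]=0$ are preserved under this substitution because the $f_t$ are unchanged and $[f_t,[x_j,x_k]]=0$ already follows by Jacobi from the eigenvalue identity.

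The main technical obstacle is the case $\mu_{t,i}=1$: one must carefully handle the pathological subcase in which $\mu_{t,j}=0$ for every $j\ne i$, where $(\star)$ and $(\star\star)$ become vacuous. Ruling this configuration out, or otherwise pinning down a single common scalar $\lambda_t^{(i)}$ that simultaneously matches $\beta_{i,i}^t$ and all of the $\beta_{i,j}^t$, is the delicate heart of the argument and requires the full strength of both $\mu$-hypotheses, mirroring the iterative cancellation performed in the proof of Lemma~\ref{lem5}.
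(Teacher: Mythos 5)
Your route is genuinely different from the paper's, and where it closes it is cleaner. From $[e_i,[x_i,x_j]]=0$ and $[e_i,[x_j,x_k]]=0$ you correctly extract $(\star)$ and $(\star\star)$, deduce that for each $t$ the vector $\bigl(\beta_{i,i}^t,(\beta_{i,j}^t)_{j\neq i}\bigr)$ is proportional to $\bigl(\mu_{t,i}-1,(\mu_{t,j})_{j\neq i}\bigr)$, and remove everything by the single substitution $e_i\mapsto e_i-\sum_t\lambda_t^{(i)}f_t$. The paper instead performs a long chain of partial normalizations (first of $[e_i,x_i]$, then of $[e_i,x_1]$, then inductively of $[e_i,x_h]$), each followed by fresh Jacobi computations and two invocations of Lemma \ref{lem5}. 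I checked $(\star)$, $(\star\star)$, the construction of $\lambda_t^{(i)}$ in the subcases $\mu_{t,i}\neq1$ and $\mu_{t,i}=1$ with some $\mu_{t,j_0}\neq0$ ($j_0\neq i$), and the final computation $[e_i',x_i]=e_i'$, $[e_i',x_j]=0$; all of this is correct, as is your remark that the relations $[f_t,x_j]=\mu_{t,j}f_t$ and the Jacobi hypotheses are preserved.

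The gap is the one you flag yourself, and it is genuine: the configuration $\mu_{t,i}=1$ and $\mu_{t,j}=0$ for all $j\neq i$ is \emph{not} excluded by the two conditions inherited from Lemma \ref{lem5} (take $s=2$, $\tau=1$, $\mu_{1,1}=1$, $\mu_{1,2}=0$: both conditions hold). In that configuration $(\star)$ and $(\star\star)$ carry no information about $\beta_{i,i}^t$, and no scalar $\lambda_t^{(i)}$ of the required form exists unless $\beta_{i,i}^t=0$. Worse, the conclusion of the lemma can actually fail there: the brackets $[e_1,x_1]=e_1+f_1$, $[f_1,x_1]=f_1$, $[e_2,x_2]=e_2$, all others zero, define a solvable Lie algebra satisfying every stated hypothesis, yet $\mathrm{ad}_{x_1}$ has a Jordan block on $\mathrm{span}\{e_1,f_1\}$, so no change of basis yields $[e_1',x_1]=e_1'$. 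Hence the gap cannot be closed by a cleverer argument; what is really needed is the extra assumption that no $f_t$ has the same $\mu$-weight vector as some $e_i$. For what it is worth, the paper's own proof makes the same silent assumption at the step where it passes from $\delta_{\mu_{t,i},1}\mu_{t,j}\beta_{i,i}^t=0$ (for all $j\neq i$) to $\delta_{\mu_{t,i},1}\beta_{i,i}^t=0$ ``taking into account the condition on the coefficients''; you have located precisely the soft spot of the lemma, but your proposal as written does not repair it.
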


\begin{proof} Taking the change
$$e_i'=e_i+\sum\limits_{t=1,\mu_{t,i}\neq1}^{\tau}\frac{\beta_{i,i}^t}{1-\mu_{t,i}}f_t,\ 1\leq i\leq s,$$
we get
$$\begin{array}{ll}
[e_i',x_i]&=e_i+\sum\limits_{t=1}^{\tau}\beta_{i,i}^tf_t+\sum\limits_{t=1,\mu_{t,i}\neq1}^{\tau}\frac{\beta_{i,i}^t}{1-\mu_{t,i}}\mu_{t,i}f_t\\[1mm]
&=e_i'-\sum\limits_{t=1,\mu_{t,i}\neq1}^{\tau}\frac{\beta_{i,i}^t}{1-\mu_{t,i}}f_t+\sum\limits_{t=1}^{\tau}\beta_{i,i}^tf_t+
\sum\limits_{t=1,\mu_{t,i}\neq1}^{\tau}\frac{\beta_{i,i}^t}{1-\mu_{t,i}}\mu_{t,i}f_t\\[1mm]
&=e_i'+\sum\limits_{t=1}^{\tau}\beta_{i,i}^tf_t+\sum\limits_{t=1,\mu_{t,i}\neq1}^{\tau}\frac{\beta_{i,i}^t}{1-\mu_{t,i}}(\mu_{t,i}-1)f_t\\[1mm]
&=e_i'+\sum\limits_{t=1}^{\tau}\beta_{i,i}^tf_t-\sum\limits_{t=1,\mu_{t,i}\neq1}^{\tau}\beta_{i,i}^tf_t\\[1mm]
&=e_i'+\sum_{t=1}^{\tau}\delta_{\mu_{t,i},1}\beta_{i,i}^tf_t.
\end{array}$$
Therefore, we can assume
$$[e_i,x_i]=e_i+\sum_{t=1}^{\tau}\delta_{\mu_{t,i},1}\beta_{i,i}^tf_t,\quad 1\leq i\leq s.$$

The chain of the equalities with $1\leq i\neq j \leq s$ below
$$0=[e_i,[x_i,x_j]]=[[e_i,x_i],x_j]-[[e_i,x_j],x_i]=
[e_i+\sum_{t=1}^{\tau}\delta_{\mu_{t,i},1}\beta_{i,i}^tf_t,x_j]-
[\sum\limits_{t=1}^{\tau}\beta_{i,j}^tf_{t},x_i]$$
$$=\sum\limits_{t=1}^{\tau}\beta_{i,j}^tf_{t}+\sum_{t=1}^{\tau}\delta_{\mu_{t,i},1}\beta_{i,i}^t\mu_{t,j}f_t-
\sum\limits_{t=1}^{\tau}\beta_{i,j}^t\mu_{t,i}f_{t}=\sum\limits_{t=1}^{\tau}\beta_{i,j}^t(1-\mu_{t,i})f_{t}+\sum_{t=1}^{\tau}\delta_{\mu_{t,i},1}\beta_{i,i}^t\mu_{t,j}f_t,$$
implies
$$\beta_{i,j}^t(1-\mu_{t,i})=0,\quad \delta_{\mu_{t,i},1}\mu_{t,j}\beta_{i,i}^t=0,\ 1\leq t\leq \tau, \ 1\leq i\neq j \leq s.$$

Taking into account the condition on the coefficients $\mu_{t,j}$ we derive
$$\beta_{i,j}^t(1-\mu_{t,i})=0,\quad \delta_{\mu_{t,i},1}\beta_{i,i}^t=0,\ 1\leq t\leq \tau, \quad 1\leq i\neq j \leq s,$$
that means
$$[e_i,x_i]=e_i,\quad 1\leq i\leq s, \quad [e_i,x_j]=\sum\limits_{t=1}^{\tau}\delta_{\mu_{t,i},1}\beta_{i,j}^tf_t,\quad 1\leq i\neq j \leq s.$$

Making the change
$$e_i'=e_i-\sum\limits_{t=1,\ \mu_{t,1}\neq0}^{\tau}\frac{\delta_{\mu_{t,i},1}\beta_{i,1}^t}{\mu_{t,1}}f_t, \quad 2\leq i\leq s$$
one can assume
$$[e_i,x_i]=e_i, \quad [e_i,x_1]=\sum\limits_{t=1}^{\tau}\delta_{\mu_{t,1},0}\delta_{\mu_{t,i},1}\beta_{i,1}^tf_t, \quad 2\leq i\leq s.$$

From the equalities with $2\leq i\neq j\leq s$:
$$0=[e_i,[x_1,x_j]]=[[e_i,x_1],x_j]-[[e_i,x_j],x_1]=
\sum\limits_{t=1}^{\tau}\delta_{\mu_{t,1},0}\delta_{\mu_{t,i},1}\mu_{t,j}\beta_{i,1}^tf_t-
\sum\limits_{t=1}^{\tau}\delta_{\mu_{t,i},1}\mu_{t,1}\beta_{i,j}^tf_t$$
we obtain
$$\delta_{\mu_{t,1},0}\delta_{\mu_{t,i},1}\mu_{t,j}\beta_{i,1}^t=0, \quad \delta_{\mu_{t,i},1}\mu_{t,1}\beta_{i,j}^t=0,\ 1\leq t\leq \tau.$$
These imply
$$\delta_{\mu_{t,1},0}\delta_{\mu_{t,i},1}\beta_{i,1}^t=0, \quad \delta_{\mu_{t,i},1}\mu_{t,1}\beta_{i,j}^t=0,\ 1\leq t\leq \tau,$$
therefore,
$$[e_i,x_1]=0,\quad 2\leq i \leq s, \quad [e_i,x_j]=\sum\limits_{t=1}^{\tau}\delta_{\mu_{t,1},0}\delta_{\mu_{t,i},1}\beta_{i,j}^tf_t,\quad 2\leq i \neq j\leq s.$$

Note that we have
$$[e_1,x_j]=\sum\limits_{t=1}^{\tau}\delta_{\mu_{t,i},1}\beta_{1,j}^tf_t,\quad 2\leq j \leq s.$$

Applying now Lemma \ref{lem5} with $u:=e_1, \ y_i:=x_{i+1}$ and $\eta=s-1$ we obtain $[e_1,x_j]=0, \quad 2\leq j \leq s.$

Thus, we get
\begin{equation}\label{eq35}
\left\{\begin{array}{lll}
[e_i,x_i]=e_i,&1\leq i\leq s,\\[1mm]
[e_i,x_1]=0,& [e_1,x_j]=0, & 2\leq i, j\leq s,\\[1mm]
[e_i,x_j]=\sum\limits_{t=1}^{\tau}\delta_{\mu_{t,1},0}\delta_{\mu_{t,i},1}\beta_{i,j}^tf_t, & 2\leq i \neq j\leq s.\\[1mm]
\end{array}\right.
\end{equation}

The following also we prove by induction on $h$.
\begin{equation}\label{eq36}
\left\{\begin{array}{lll}
[e_i,x_i]=e_i,& 1\leq i\leq s,\\[1mm]
[e_i,x_j]=0,& 1\leq i\leq s, & 1\leq j\leq h,  i\neq j,\\[1mm]
[e_i,x_j]=0,& 1\leq j\leq s, & 1\leq i\leq h,  i\neq j,\\[1mm]
[e_i,x_j]=\sum\limits_{t=1}^{\tau}\delta_{\mu_{t,i},1}\delta_{\mu_{t,1},0}\beta_{i,j}^tf_t, & 2\leq i \neq j\leq s,\\[1mm]
[e_i,x_j]=\sum\limits_{t=1}^{\tau}\delta_{\mu_{t,i},1}\prod_{q=1}^{h}\delta_{\mu_{t,q},0}\beta_{i,j}^tf_t,& h+1\leq i \neq j\leq s.\\[1mm]
\end{array}\right.
\end{equation}

%
%

From \eqref{eq35} we get the base of the induction. Let us prove \eqref{eq36} for $h+1$.

Consider the change
$$e_{i}'=e_{i}-\sum\limits_{t=1,\ \mu_{t,h+1}\neq0}^{\tau}\frac{\delta_{\mu_{t,i},1}\prod_{q=1}^{h}\delta_{\mu_{t,q},0}\beta_{i,h+1}^t}{\mu_{t,h+1}}f_t,\ h+1\leq i\leq s.$$

Then we derive
$$[e_i',x_i]=e_i', \quad [e_i',x_j]=0,\ 1\leq j\leq h, \quad h+1\leq i\leq s$$
$$[e_i',x_{h+1}]=\sum\limits_{t=1}^{\tau}\delta_{\mu_{t,i},1}\prod_{q=1}^{h+1}\delta_{\mu_{t,q},0}\beta_{i,h+1}^tf_t,\ h+2\leq i\leq s.$$

The conditions of the lemma on $\mu_{t,j}$ and the equalities with $i,j \ (h+2\leq i,j\leq s)$
$$0=[e_i,[x_{h+1},x_j]]=[[e_i,x_{h+1}],x_j]-[[e_i,x_j],x_{h+1}]$$
$$=\sum\limits_{t=1}^{\tau}\Big(\delta_{\mu_{t,i},1}\prod_{q=1}^{h+1}\delta_{\mu_{t,q},0}\beta_{i,h+1}^t\mu_{t,j}-
\delta_{\mu_{t,i},1}\prod_{q=1}^{h}\delta_{\mu_{t,q},0}\beta_{i,j}^t\mu_{t,h+1}\Big)f_t$$
imply
$$\delta_{\mu_{t,i},1}\prod_{q=1}^{h+1}\delta_{\mu_{t,q},0}\beta_{i,h+1}^t=0,\quad
\delta_{\mu_{t,i},1}\prod_{q=1}^{h}\delta_{\mu_{t,q},0}\beta_{i,j}^t\mu_{t,h+1}=0,\ 1\leq t\leq \tau.$$
Therefore we get
$$\left\{\begin{array}{llll}
[e_i,x_{h+1}]=0, & 1\leq i\leq h,\\[1mm]
[e_{h+1},x_j]=\sum\limits_{t=1}^{\tau}\delta_{\mu_{t,h+1},1}
\prod_{q=1}^{h}\delta_{\mu_{t,q},0}\beta_{h+1,j}^tf_t, & h+2\leq j\leq s,\\[1mm]
[e_i,x_j]=\sum\limits_{t=1}^{\tau}\delta_{\mu_{t,i},1}
\prod_{q=1}^{h+1}\delta_{\mu_{t,q},0}\beta_{i,j}^tf_t, & h+2\leq i \neq j\leq s.\\[1mm]
\end{array}\right.$$

Applying Lemma \ref{lem5} with $u:=e_{h+1}, \ y_i:=x_i,\ h+2\leq i\leq s$ we derive $[e_{h+1},x_j]=0,\quad  h+2\leq j\leq s,$ which completes the proof of \eqref{eq36}.
%

Putting now $h=s$ in \eqref{eq36} we get the proof of the lemma.
\end{proof}

Let
$$[e_i,x_j]=\sum\limits_{t=i}^{n}\alpha_{i,j}^te_t,\quad 1\leq i \leq k, \quad 1\leq j \leq s.$$

Note that

\begin{equation}\label{eq2}
det\left(\begin{array}{cccccc}
\alpha_{1,1}^1&\alpha_{1,2}^1&\dots&\alpha_{1,s}^1\\[1mm]
\alpha_{2,1}^2&\alpha_{2,2}^2&\dots&\alpha_{2,s}^2\\[1mm]
\vdots&\vdots&\ddots&\vdots\\[1mm]
\alpha_{s,1}^s&\alpha_{s,2}^s&\dots&\alpha_{s,s}^s\\[1mm]
\end{array}\right)\neq 0.
\end{equation}

Indeed, if the determinant would be zero then ${ad_{x_i}}_{|N}$ would be nil-dependent which is not the case.

Thanks to \eqref{eq2} we obtain the system of equations:
$$\sum\limits_{j=1}^{s}A_{i,j}\alpha_{i,j}^i=1,\quad \sum\limits_{p=1}^{s}A_{i,p}\alpha_{j,p}^j=0, \quad 1\leq j\neq i\leq s,$$
that has a unique non trivial solution with respect to unknowns $A_{i,j}, \ 1\leq i,j \leq s.$

Let us take the change of basis elements $x_i, \ 1\leq i\leq s$ as follows
$x_i'=\sum_{j=1}^{s}A_{i,j}x_j, \ 1\leq i \leq s.$

Then we get
$$[e_i,x_i']=\sum\limits_{t=i}^{n}(\sum\limits_{j=1}^{s}A_{i,j}\alpha_{i,j}^t)e_t, \quad [e_j,x_i']=\sum\limits_{t=j}^{n}(\sum\limits_{p=1}^{s}A_{i,p}\alpha_{j,p}^t)e_t.$$

Thus, one can assume that
$$[e_i,x_i]=e_i+\sum\limits_{t=i+1}^{n}\alpha_{i,i}^te_t,\quad 1\leq i\leq s, \quad [e_i,x_j]=\sum\limits_{t=i+1}^{n}\alpha_{i,j}^te_t,\quad 1\leq i\neq j \leq s.$$

Consider sequentially the changes of basis elements $e_i$ for $i=1, 2, \dots, s$ as follows
$$e_i'=e_i+\sum_{t=i+1}^{s}C_{t}e_{t} \quad \mbox{with} \quad C_{i+1}=\alpha_{i,i}^{i+1},\ \quad C_t=\alpha_{i,i}^t+\sum\limits_{j=i+1}^{t-1}C_j\alpha_{j,i}^t,\quad i+2\leq t\leq s.$$

Then
$$\begin{array}{llll}
[e_i',x_i]&=e_i+\sum\limits_{t=i+1}^{n}\alpha_{i,i}^te_t+\sum\limits_{t=i+1}^{s}C_t[e_t,x_i]\\[1mm]
&=e_i+\sum\limits_{t=i+1}^{n}\alpha_{i,i}^te_t+\sum\limits_{t=i+1}^{s}C_t\sum\limits_{p=t+1}^{n}\alpha_{t,i}^pe_p\\[1mm]
& =e_i+\sum\limits_{t=i+1}^{s}\alpha_{i,i}^te_t+\sum\limits_{t=s+1}^{n}\alpha_{i,i}^te_t+
\sum\limits_{t=i+1}^{s}\Big(\sum\limits_{p=t+1}^{s}C_t\alpha_{t,i}^pe_p+\sum\limits_{p=s+1}^{n}C_t\alpha_{t,i}^pe_p\Big)e_i\\[1mm]
&+\sum\limits_{t=i+1}^{s}\alpha_{i,i}^te_t+\sum\limits_{t=s+1}^{n}\alpha_{i,i}^te_t+
\sum\limits_{t=i+2}^{s}\sum\limits_{p=i+1}^{t-1}C_p\alpha_{p,i}^te_t+\sum\limits_{t=s+1}^{n}\sum\limits_{p=i+1}^{s}C_p\alpha_{p,i}^te_t\\[1mm]
&=e_i+\alpha_{i,i}^{i+1}e_{i+1}+\sum\limits_{t=i+2}^{s}(\alpha_{i,i}^t+\sum\limits_{p=i+1}^{t-1}C_p\alpha_{p,i}^t)e_t+
\sum\limits_{t=s+1}^{n}(\alpha_{i,i}^te_t+\sum\limits_{p=i+1}^{s}C_p\alpha_{p,i}^t)e_t\\[1mm]
& =e_i'+\sum_{t=s+1}^{n}(\alpha_{i,i}^te_t+\sum\limits_{p=i+1}^{s}C_p\alpha_{p,i}^t)e_t.
\end{array}$$

Therefore, one can assume that
$$[e_i,x_i]=e_i+\sum\limits_{t=s+1}^{n}\alpha_{i,i}^te_t, \quad 1\leq i\leq s.$$

Consider now the products in $R$ modulo $N^2$. Then
\begin{equation}\label{modN2}
\left\{\begin{array}{llll}
[e_i,x_i]\equiv e_i+\sum\limits_{t=s+1}^{k}\alpha_{i,i}^te_t & (mod \ N^2), & 1\leq i\leq s, \\[1mm]
[e_i,x_j]\equiv \sum\limits_{t=i+1}^{k}\alpha_{i,j}^te_t & (mod \ N^2), & 1\leq i\neq j \leq s, \\[1mm]
[e_i,x_j]\equiv \alpha_{i,j}e_i+\sum\limits_{t=i+1}^{k}\alpha_{i,j}^te_t & (mod \ N^2), & s+1\leq i\leq k, \ 1\leq j \leq s.$$
\end{array}\right.
\end{equation}

The next lemma gives the explicit expressions for the products $[e_i,x_j]$ modulo $N^2$.

\begin{lem}\label{lem7} The following congruence equalities take place
\begin{equation}\label{eqlem4}
\left\{\begin{array}{lll}
[e_i,x_i]\equiv e_i & (mod \ N^2), & 1\leq i\leq s,\\[1mm]
[e_i,x_j]\equiv \alpha_{i,j}e_i & (mod \ N^2), & s+1\leq i\leq k, \  1\leq j \leq s, \\[1mm]
[e_i,x_j]\equiv 0 & (mod \ N^2),&1\leq i\neq j\leq s.\\[1mm]
\end{array}\right.
\end{equation}
\end{lem}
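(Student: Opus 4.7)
The plan is to pass to the quotient $\bar{\mathcal R} := \mathcal R/\mathcal N^2$ and reduce the claim to a direct application of Lemma~\ref{lem6}. Since $\mathcal N$ is the nilradical, $[x_j, x_p] \in \mathcal N$, whence $[e_i, [x_j, x_p]] \in \mathcal N^2$ and the Jacobi identity simplifies to $[[e_i, x_j], x_p] \equiv [[e_i, x_p], x_j] \pmod{\mathcal N^2}$ for all admissible $i, j, p$. Only the generators $\bar e_1, \ldots, \bar e_k$ survive in $\mathcal N/\mathcal N^2$, so the relevant structure constants are precisely those displayed in \eqref{modN2}.

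The critical preparation is to diagonalize the $\mathrm{ad}_{x_j}$-action on the span of $\bar e_{s+1}, \ldots, \bar e_k$, i.e.\ to establish $[e_i, x_j] \equiv \alpha_{i,j} e_i \pmod{\mathcal N^2}$ for $s+1 \le i \le k$. I would argue by descending induction on $i$ from $k$ down to $s+1$. Assuming $[e_t, x_j] \equiv \alpha_{t,j} e_t \pmod{\mathcal N^2}$ for every $t$ with $i < t \le k$, expanding both sides of the Jacobi relation modulo $\mathcal N^2$ and comparing coefficients of $e_t$ yields
\[
\alpha_{i,p}^t\,(\alpha_{i,j} - \alpha_{t,j}) = \alpha_{i,j}^t\,(\alpha_{i,p} - \alpha_{t,p}),\qquad i+1 \le t \le k,\ 1 \le j,p \le s.
\]
Condition~\textbf{A)} applied to the pair $(i,t) \in \{s+1,\ldots,k\}^2$ furnishes a $p_0$ with $\alpha_{i,p_0} \ne \alpha_{t,p_0}$; then $c_t := \alpha_{i,p_0}^t/(\alpha_{i,p_0} - \alpha_{t,p_0})$ is a well-defined constant, the displayed relation identifies this ratio with $\alpha_{i,j}^t/(\alpha_{i,j} - \alpha_{t,j})$ whenever the denominator is nonzero, and it also forces $\alpha_{i,j}^t = 0$ whenever the denominator vanishes. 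The single basis change $e_i' := e_i + \sum_{t=i+1}^{k} c_t\, e_t$ therefore simultaneously annihilates the off-diagonal contributions for every $j \in \{1, \ldots, s\}$, closing the inductive step without disturbing the normalizations already in force for $t > i$.

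With this diagonalization in hand, the remaining congruences of \eqref{modN2} fit the template of Lemma~\ref{lem6} under the identifications $f_t := \bar e_{s+t}$, $\tau := k - s$ and $\mu_{t,j} := \alpha_{s+t, j}$; a short analogous Jacobi-plus-Condition-\textbf{A)} clean-up within $\{\bar e_1, \ldots, \bar e_s\}$ removes any residual cross-contributions so that $[e_i, x_j]$ for $1 \le i \ne j \le s$ is expressed purely in terms of the $f_t$'s. The two hypotheses on $\mu_{t,j}$ required by Lemma~\ref{lem6} follow from Condition~\textbf{A)} together with the maximality of $\dim \mathcal Q$ established in Lemma~\ref{lem111}: a row $\mu_{t, \cdot}$ that vanished identically, or was constant in $j$, would contradict the fact that the columns $(\alpha_{1, j}, \ldots, \alpha_{n, j})$, $1 \le j \le s$, form a basis of fundamental solutions to $S_e$. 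Lemma~\ref{lem6} then delivers $[e_i, x_i] \equiv e_i$ and $[e_i, x_j] \equiv 0 \pmod{\mathcal N^2}$ for $1 \le i \ne j \le s$, completing the proof. The main obstacle is the middle paragraph: a single basis change must diagonalize the actions of all $s$ operators $\mathrm{ad}_{x_j}$ at once, and the proportionality produced by the Jacobi identity, made possible by Condition~\textbf{A)}, is exactly the ingredient that makes this miracle work.
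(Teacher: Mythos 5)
Your proposal is correct and reaches the paper's conclusion through the same overall skeleton --- work modulo $\mathcal{N}^2$, first diagonalize the action of $\mathcal{Q}$ on the generators $e_{s+1},\dots,e_k$, then feed the block $e_1,\dots,e_s$ into Lemma~\ref{lem6} --- but your diagonalization step is organized genuinely differently. The paper runs an ascending induction on the torus index $h$: it normalizes $[e_i,x_1]$, then $[e_i,x_2]$, and so on, carrying the bookkeeping factors $\prod_{q=1}^{h}\delta_{\alpha_{i,q},\alpha_{t,q}}$ through every step until condition \textbf{A)} annihilates them all at $h=s$. You instead run a descending induction on the generator index $i$ and extract from the Jacobi identity the proportionality $\alpha_{i,p}^t(\alpha_{i,j}-\alpha_{t,j})=\alpha_{i,j}^t(\alpha_{i,p}-\alpha_{t,p})$, which, via the $p_0$ supplied by condition \textbf{A)}, shows that every off-diagonal coefficient has the form $\alpha_{i,j}^t=c_t(\alpha_{i,j}-\alpha_{t,j})$ for a single constant $c_t$ independent of $j$; hence one basis change $e_i\mapsto e_i+\sum_t c_t e_t$ diagonalizes all $s$ operators simultaneously. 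I verified the coefficient computation: the inductive hypothesis $[e_t,x_p]\equiv\alpha_{t,p}e_t$ for $t>i$ is exactly what closes the relation, and replacing the paper's Kronecker-delta products by this proportionality is a real simplification. Two caveats. First, your justification of the two hypotheses of Lemma~\ref{lem6} on $\mu_{t,j}=\alpha_{s+t,j}$ (that a zero or constant row ``would contradict the columns forming a basis of fundamental solutions'') does not actually follow: the columns being a basis of the solution space of $S_e$ imposes no constraint on an individual row being zero or constant in $j$. The paper, however, invokes Lemma~\ref{lem6} with exactly the same unverified hypotheses, so you are not introducing a new gap --- just be aware that this point is assumed rather than proved on both sides. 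Second, condition \textbf{A)} plays no role in the preliminary clean-up of the block $1\le i\ne j\le s$; the vanishing of $\alpha_{i,j}^t$ for $t\le s$ follows from the Jacobi identity for $e_i,x_i,x_j$ and the normalization $[e_i,x_i]\equiv e_i+\cdots$ alone, so mentioning it there is harmless but misleading.
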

\begin{proof}
From the chain of congruences modulo $N^2$ for $1\leq i\neq j\leq s$ below
$$\begin{array}{ll}
0&\equiv[e_i,[x_i,x_j]]\equiv[[e_i,x_i],x_j]-[[e_i,x_j],x_i]\\[1mm]
&\equiv \sum\limits_{t=i+1}^{k}\alpha_{i,j}^te_t+\sum\limits_{t=s+1}^{k}\alpha_{i,i}^t(\alpha_{t,j}e_t+\sum\limits_{p=t+1}^{k}\alpha_{t,j}^pe_p)
-\sum\limits_{t=i+1}^{s}\alpha_{i,j}^t(\sum\limits_{p=t+1}^{k}\alpha_{t,i}^pe_p)\\[1mm]
&-\sum\limits_{t=s+1}^{k}\alpha_{i,j}^t(\alpha_{t,i}e_t+\sum\limits_{p=t+1}^{k}\alpha_{t,i}^pe_p)\ (mod \ N^2)\\[1mm]
&\equiv \sum\limits_{t=i+1}^{s}\alpha_{i,j}^te_t+\sum\limits_{t=s+1}^{k}\alpha_{i,j}^te_t
+\sum\limits_{t=s+1}^{k}\alpha_{i,i}^t(\alpha_{t,j}e_t+
\sum\limits_{p=t+1}^{k}\alpha_{t,j}^pe_p)-\sum\limits_{t=i+1}^{s}\alpha_{i,j}^t\sum\limits_{p=t+1}^{k}
\alpha_{t,i}^pe_p\\[1mm]
&-\sum\limits_{t=s+1}^{k}\alpha_{i,j}^t(\alpha_{t,i}e_t+\sum\limits_{p=t+1}^{k}\alpha_{t,i}^pe_p) \quad (mod \ N^2),
\end{array}$$
we derive $\alpha_{i,j}^t=0$ for any $1\leq i\neq j \leq s, \ i+1\leq t \leq s$. Hence, we get
\begin{equation}\label{eixjineqj}
[e_i,x_j]\equiv \sum\limits_{t=s+1}^{k}\alpha_{i,j}^te_t\  (mod \ N^2),\quad 1\leq i\neq j \leq s.
\end{equation}

Consider sequentially the changes of basis elements $e_i$ for $i=s, \dots, k$ as follows
$$e_i'=e_i+\sum_{t=i+1}^{k}C_{t}e_{t},$$
where
$$C_{t}=\left\{\begin{array}{ll}
0,&\mbox{if} \quad \alpha_{t,1}=\alpha_{i,1},\\[1mm]
\frac{\alpha_{i,1}^t+\sum\limits_{p=i+1}^{t-1}C_{p}\alpha_{p,1}^t}{\alpha_{i,1}-\alpha_{t,1}},&\mbox{if} \quad \alpha_{t,1}\neq \alpha_{i,1}.\\[1mm]
\end{array}\right. $$

Then
$$\begin{array}{lll}
[e_i',x_1]&\equiv\alpha_{i,1}e_i+\sum\limits_{t=i+1}^{k}\alpha_{i,1}^te_t+
\sum\limits_{t=i+1}^{k}C_{t}(\alpha_{t,1}e_t+\sum\limits_{p=t+1}^{k}\alpha_{t,1}^pe_p)\\[1mm]
&\equiv \alpha_{i,1}e_i+\sum\limits_{t=i+1}^{k}(\alpha_{i,1}^t+C_{t}\alpha_{t,1})e_t+
\sum\limits_{p=i+2}^{k}\sum\limits_{t=i+1}^{p-1}C_{t}\alpha_{t,1}^pe_p\\[1mm]
&\equiv \alpha_{i,1}e_i+\sum\limits_{t=i+1}^{k}(\alpha_{i,1}^t+C_{t}\alpha_{t,1})e_t+
\sum\limits_{t=i+2}^{k}\sum\limits_{p=i+1}^{t-1}C_{p}\alpha_{p,1}^te_t\\[1mm]
&\equiv\alpha_{i,1}e_i'+(\alpha_{i,1}^{i+1}+C_{i+1}(\alpha_{i+1,1}-\alpha_{i,1}))e_{i+1}\\[1mm]
&+\sum\limits_{t=i+2}^{k}(\alpha_{i,1}^t+C_{t}(\alpha_{t,1}-\alpha_{i,1})+
\sum\limits_{p=i+1}^{t-1}C_{p}\alpha_{p,1}^t)e_t \  (mod \ N^2).
\end{array}$$

Thus, we can suppose that
\begin{equation}\label{eix1}
[e_i,x_1]\equiv\alpha_{i,1}e_i+\sum_{t=i+1}^{k}\delta_{\alpha_{t,1}, \alpha_{i,1}}\alpha_{i,1}^te_t \  (mod \ N^2),\quad s+1\leq i\leq k,
\end{equation}
where $\delta_{-, -}$ is the Kronecker delta.

For $s+1\leq i\leq k$ and $2\leq j\leq s$ we consider the followings congruences

$$\begin{array}{lll}
0 &\equiv[e_i,[x_1,x_j]]\equiv[[e_i,x_1],x_j]-[[e_i,x_j],x_1]\\[1mm]
& \equiv\alpha_{i,1}(\alpha_{i,j}e_i+\sum\limits_{t=i+1}^{k}\alpha_{i,j}^te_t)+
\sum_{t=i+1}^{k}\delta_{\alpha_{t,1},\alpha_{i,1}}\alpha_{i,1}^t
(\alpha_{t,j}e_t+\sum\limits_{p=t+1}^{k}\alpha_{t,j}^pe_p)\\[1mm]
&-\alpha_{i,j}(\alpha_{i,1}e_i+\sum_{t=i+1}^{k}\delta_{\alpha_{t,1}, \alpha_{i,1}}\alpha_{i,1}^te_t)-
\sum\limits_{t=i+1}^{k}\alpha_{i,j}^t(\alpha_{t,1}e_t+\sum_{p=t+1}^{k}\delta_{\alpha_{p,1}, \alpha_{t,1}}\alpha_{t,1}^pe_p)\\[1mm]
&\equiv \sum\limits_{t=i+1}^{k}(\alpha_{i,1}-\alpha_{t,1})\alpha_{i,j}^te_t+
\sum_{t=i+1}^{k}\delta_{\alpha_{t,1},\alpha_{i,1}}\alpha_{i,1}^t(\alpha_{t,j}-\alpha_{i,j})e_t\\[1mm]
&-\sum_{t=i+1}^{k}\delta_{\alpha_{t,1},\alpha_{i,1}}\alpha_{i,1}^t(\sum\limits_{p=t+1}^{k}
\alpha_{t,j}^pe_p)-\sum\limits_{t=i+1}^{k}\alpha_{i,j}^t(\sum_{p=t+1}^{k}\delta_{\alpha_{p,1},\alpha_{t,1}}\alpha_{t,1}^pe_p) \\[1mm]
&\equiv \sum\limits_{t=i+1}^{k}(\alpha_{i,1}-\alpha_{t,1})\alpha_{i,j}^te_t+\sum_{t=i+1}^{k}\delta_{\alpha_{t,1},\alpha_{i,1}}\alpha_{i,1}^t(\alpha_{t,j}-\alpha_{i,j})e_t\\[1mm]
&+\sum\limits_{p=i+2}^{k}\Big(\sum_{t=i+1}^{p-1}(\delta_{\alpha_{t,1},\alpha_{i,1}}\alpha_{i,1}^t\alpha_{t,j}^p-\delta_{\alpha_{p,1},\alpha_{t,1}}\alpha_{i,j}^t\alpha_{t,1}^p)\Big)e_p \  (mod \ N^2).
\end{array}$$

These congruences imply the equalities
$$(\alpha_{i,1}-\alpha_{t,1})\alpha_{i,j}^t=0, \quad \delta_{\alpha_{t,1},\alpha_{i,1}}\alpha_{i,1}^t(\alpha_{t,j}-\alpha_{i,j})=0.$$

Due to condition {\bf A)} and the products \eqref{modN2}, \eqref{eix1} we obtain
\begin{equation}\label{eq1234}
\left\{\begin{array}{lll}
[e_i,x_1]\equiv\alpha_{i,1}e_i & (mod \ N^2),& s+1\leq i\leq k,\\[1mm]
[e_i,x_j]\equiv\alpha_{i,j}e_i+\sum\limits_{t=i+1}^{k}\delta_{\alpha_{i,1},\alpha_{t,1}}\alpha_{i,j}^te_t & (mod \ N^2),&
s+1\leq i\leq k,\quad 2\leq j \leq s.
\end{array}\right.
\end{equation}

By induction on $h$ we prove the followings
\begin{equation}\label{eqHinduction}
\left\{\begin{array}{llll}
[e_i,x_j]\equiv \alpha_{i,j}e_i & (mod \ N^2), & s+1\leq i\leq k, & 1\leq j\leq h,\\[1mm]
[e_i,x_j]\equiv \alpha_{i,j}e_i+\sum\limits_{t=i+1}^{k}\prod_{q=1}^{h}\delta_{\alpha_{i,q},\alpha_{t,q}}\alpha_{i,j}^te_t & (mod \ N^2),&
 s+1\leq i\leq k,& h+1\leq j \leq s.
\end{array}\right.
\end{equation}

The base of the induction (that is for $h=1$) is clear thanks to \eqref{eq1234}. Assuming that \eqref{eqHinduction} is true for any $h$ we prove for $h+1$.

Consider the change
$$e_i'=e_i+\sum\limits_{t=i+1,\ \alpha_{i,h+1}\neq \alpha_{t,h+1}}^{k}
\prod_{q=1}^{h}\delta_{\alpha_{i,q},\alpha_{t,q}}A_{i,t}e_t,\ s+1\leq i\leq k,$$
with
$$A_{i,t}=\frac{\alpha_{i,h+1}^t+\sum\limits_{p=i+1,\alpha_{i,h+1}\neq \alpha_{t,h+1}}^{t-1}\prod_{q=1}^{h}\delta_{\alpha_{i,q},\alpha_{t,q}}A_{i,p}\alpha_{p,h+1}^t}
{\alpha_{i,h+1}-\alpha_{t,h+1}},\ i+1\leq t\leq s.$$

Then we derive
$$\begin{array}{lll}
[e_i',x_{h+1}]&=\alpha_{i,h+1}e_i+\sum\limits_{t=i+1}^{k}\prod_{q=1}^{h}\delta_{\alpha_{i,q},\alpha_{t,q}}\alpha_{i,h+1}^te_t\\[1mm]
&+\sum\limits_{t=i+1,\ \alpha_{i,h+1}\neq \alpha_{t,h+1}}^{k}\prod_{q=1}^{h}\delta_{\alpha_{i,q},\alpha_{t,q}}A_{i,t}
(\alpha_{t,h+1}e_t+\sum\limits_{p=t+1}^{k}\prod_{q=1}^{h}\delta_{\alpha_{i,q},\alpha_{p,q}}\alpha_{t,h+1}^pe_p)\\[1mm]
&-\alpha_{i,h+1}e_i+\sum\limits_{t=i+1}^{k}\prod_{q=1}^{h}\delta_{\alpha_{i,q},\alpha_{t,q}}\alpha_{i,h+1}^te_t+
\sum\limits_{t=i+1,\ \alpha_{i,h+1}\neq \alpha_{t,h+1}}^{k}\prod_{q=1}^{h}\delta_{\alpha_{i,q},\alpha_{t,q}}A_{i,t}\alpha_{t,h+1}e_t\\[1mm]
&+\sum\limits_{p=i+2}^{k}\sum\limits_{t=i+1,\ \alpha_{i,h+1}\neq \alpha_{t,h+1}}^{p-1}\prod_{q=1}^{h}\delta_{\alpha_{i,q},\alpha_{t,q}}
\delta_{\alpha_{i,q},\alpha_{p,q}}A_{i,t}\alpha_{t,h+1}^pe_p\\[1mm]
&=\alpha_{i,h+1}e_i+\sum\limits_{t=i+1}^{k}\prod_{q=1}^{h}\delta_{\alpha_{i,q},\alpha_{t,q}}\alpha_{i,h+1}^te_t+
\sum\limits_{t=i+1,\ \alpha_{i,h+1}\neq \alpha_{t,h+1}}^{k}\prod_{q=1}^{h}\delta_{\alpha_{i,q},\alpha_{t,q}}A_{i,t}\alpha_{t,h+1}e_t\\[1mm]
&+\sum\limits_{t=i+2}^{k}\sum\limits_{p=i+1,\ \alpha_{i,h+1}\neq \alpha_{p,h+1}}^{t-1}
\prod_{q=1}^{h}\delta_{\alpha_{i,q},\alpha_{p,q}}\delta_{\alpha_{i,q},\alpha_{t,q}}A_{i,p}\alpha_{p,h+1}^te_t\\[1mm]
&=\alpha_{i,h+1}e_i'+\sum\limits_{t=i+1}^{k}\prod_{q=1}^{h+1}\delta_{\alpha_{i,q},\alpha_{t,q}}A_{i,t}(\alpha_{i,h+1}-\alpha_{t,h+1})e_t\\[1mm]
&+\sum\limits_{t=i+1,\ \ \alpha_{i,h+1}\neq \alpha_{t,h+1}}^{k}\prod_{q=1}^{h}\delta_{\alpha_{i,q},\alpha_{t,q}}(A_{i,t}(\alpha_{t,h+1}-\alpha_{i,h+1})+
A_{i,t}(\alpha_{i,h+1}-\alpha_{t,h+1}))e_t\\[1mm]
&=\alpha_{i,h+1}e_i'+\sum\limits_{t=i+1}^{k}\prod_{q=1}^{h+1}\delta_{\alpha_{i,q},\alpha_{t,q}}
A_{i,t}(\alpha_{i,h+1}-\alpha_{t,h+1})e_t.
\end{array}$$

Consequently, one can assume that
$$[e_i,x_{h+1}] \equiv\alpha_{i,h+1}e_i+\sum\limits_{t=i+1}^{k}\prod_{q=1}^{h+1}\delta_{\alpha_{i,q},\alpha_{t,q}}
\alpha_{i,h+1}^te_t \quad (mod \ N^2).$$

The following congruences with $s+1\leq i\leq k,\ h+2\leq j\leq s$:
$$\begin{array}{lll}
0&\equiv [e_i,[x_j,x_{h+1}]]=[[e_i,x_j],x_{h+1}]-[[e_i,x_{h+1}],x_j]\\[1mm]
&\equiv\sum\limits_{t=i+1}^{k}\prod_{q=1}^{h}\delta_{\alpha_{i,q},\alpha_{t,q}}\Big
(\delta_{\alpha_{i,h+1},\alpha_{t,h+1}}(\alpha_{i,j}-\alpha_{t,j})\alpha_{i,h+1}^t+
(\alpha_{t,h+1}-\alpha_{i,h+1})\alpha_{i,j}^t\Big)e_t\\[1mm]
&+\sum\limits_{t=i+1}^{k}\sum\limits_{p=t+1}^{k}\prod_{q=1}^{h}\delta_{\alpha_{i,q},\alpha_{t,q}}
\prod_{q=1}^{h}\delta_{\alpha_{t,q},\alpha_{p,q}}\Big(\delta_{\alpha_{t,h+1},\alpha_{p,h+1}}\alpha_{i,j}^t\alpha_{t,h+1}^p-
\delta_{\alpha_{i,h+1},\alpha_{t,h+1}}\alpha_{i,h+1}^t\alpha_{t,j}^p\Big)e_p \ (mod \ N^2),
\end{array}$$
give
$$\prod_{q=1}^{h+1}\delta_{\alpha_{i,q},\alpha_{t,q}}\alpha_{i,h+1}^t=0,\ \quad
\prod_{q=1}^{h}\delta_{\alpha_{i,q},\alpha_{t,q}}(\alpha_{t,h+1}-\alpha_{i,h+1})\alpha_{i,j}^t=0,\ \ i+1\leq t\leq k.$$

Therefore, we get
$$[e_i,x_{h+1}]\equiv\alpha_{i,h+1}e_i \ (mod \ N^2),\quad $$
$$[e_i,x_j]\equiv\alpha_{i,j}e_i+\sum\limits_{t=i+1}^{k}
\prod_{q=1}^{h+1}\delta_{\alpha_{i,q},\alpha_{t,q}}\alpha_{i,j}^te_t \ (mod \ N^2),\quad s+1\leq i\leq k,\quad h+2\leq j \leq s.$$
Thus, \eqref{eqHinduction} is proved. Taking $h=s$ we get

\begin{equation}\label{eixj}
[e_i,x_j]\equiv\alpha_{i,j}e_i \  (mod \ N^2),\quad s+1\leq i\leq k,\quad 1\leq j \leq s.
\end{equation}

If in the assertion of Lemma \ref{lem6} we assume $f_i=e_{s+i}$, $\tau=k-s$ and $\mu_{t,j}=\alpha_{t,j}$, then we get the first product in \eqref{modN2} and the products \eqref{eixjineqj}, \eqref{eixj}. Therefore, applying Lemma \ref{lem6} we complete the proof.
%
%
%
\end{proof}

The next result gives the vanishing of the product $[Q,Q]$ modulo $N^2$.

\begin{lem}\label{lem8} One has
$$[x_i,x_j]\equiv 0 \ (mod \ N^2), \quad 1\leq i, j \leq s.$$
\end{lem}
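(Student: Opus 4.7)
Since $\mathcal{R}$ is solvable, $[\mathcal{R},\mathcal{R}]$ is nilpotent and so $[\mathcal{R},\mathcal{R}]\subseteq\mathcal{N}$, giving $[x_i,x_j]\in\mathcal{N}$. My plan is to expand $[x_i,x_j]\equiv \sum_{t=1}^{k}\gamma_{i,j}^{t}e_t\pmod{N^2}$ and force each $\gamma_{i,j}^{t}=0$.

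The main engine is the Jacobi identity for $\{x_i,x_j,x_k\}$ computed in $\mathcal{R}/N^2$. By Lemma~\ref{lem7} the action of $\bar{x}_l$ on $\mathcal{N}/N^2$ is diagonal with $[\bar{e}_t,\bar{x}_l]=\alpha_{t,l}\bar{e}_t$ (where $\alpha_{t,l}=\delta_{t,l}$ for $t\leq s$). Reading off the $\bar{e}_t$-coefficient of the Jacobi relation yields, for every triple $(i,j,k)$,
$$\gamma_{i,j}^{t}\alpha_{t,k}+\gamma_{j,k}^{t}\alpha_{t,i}+\gamma_{k,i}^{t}\alpha_{t,j}=0.$$
Specializing $k=t$ when $t\in\{1,\ldots,s\}\setminus\{i,j\}$ immediately forces $\gamma_{i,j}^{t}=0$, since $\alpha_{t,t}=1$ and $\alpha_{t,i}=\alpha_{t,j}=0$. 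For the remaining indices $t\in\{i,j\}$ with $t\leq s$, I would cancel the corresponding components by adjusting the complementary subspace: the substitutions $x_i\mapsto x_i-\gamma_{i,j}^{j}e_j$ and $x_j\mapsto x_j+\gamma_{i,j}^{i}e_i$ kill the offending $e_i,e_j$ pieces of $[x_i,x_j]$, and the relation $[\bar{e}_m,\bar{x}_l]=\delta_{m,l}\bar{e}_m$ for $m,l\leq s$ ensures every other bracket $[x_p,x_q]$ stays unchanged modulo $N^2$. Thus the modifications can be carried out independently for each pair $i<j$.

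The delicate range is $t\in\{s+1,\ldots,k\}$. There the above relation identifies $(\gamma_{i,j}^{t})_{i<j}$ with a Chevalley--Eilenberg 2-cocycle of the abelian Lie algebra $\mathcal{Q}$ with values in the one-dimensional module $\mathbb{C}_{\lambda_t}$ of weight $\lambda_t=(\alpha_{t,1},\ldots,\alpha_{t,s})$. When $\lambda_t\neq 0$ the cohomology group $H^2(\mathcal{Q},\mathbb{C}_{\lambda_t})$ vanishes, so the cocycle is a coboundary and the residual $e_t$-components are absorbed by a basis change $x_l\mapsto x_l+\mu_l^{(t)}e_t$ with scalars $\mu_l^{(t)}$ read off from the coboundary trivialization. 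The main obstacle --- and where both the pure non-characteristically nilpotent hypothesis and Condition \textbf{A)} come into play --- is to exclude $\lambda_t=0$: a zero-weight generator $e_t$ would produce an additional diagonal derivation of $\mathcal{N}$ nil-independent from $d_{0,1},\ldots,d_{0,s}$, contradicting the equality $\dim\mathcal{Q}=\dim\mathcal{T}_{\max}$ established in Lemma~\ref{lem111} via Theorem~\ref{thm22}. Handling this last sub-case rigorously, and orchestrating the various basis adjustments across all pairs $(i,j)$ and all indices $t$ so that earlier corrections are not undone, is the technical heart of the argument.
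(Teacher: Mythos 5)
Your first two reductions coincide with the paper's: the Jacobi identity for $\{x_i,x_j,x_p\}$ read modulo $N^2$ kills the $e_p$-components for $p\le s$, $p\notin\{i,j\}$, and the shift of $x_i$ by multiples of $e_j$ ($j\ne i$, $j\le s$) absorbs the $e_i,e_j$-components without disturbing the other brackets. For the remaining components along the generators $e_t$ with $s<t\le k$ you take a genuinely different route: you package $(\gamma_{i,j}^t)_{i<j}$ as a $2$-cochain on the abelian algebra $\mathcal{Q}$ with values in the weight line $\mathbb{C}_{\lambda_t}$, observe that the Jacobi identity is exactly the cocycle condition, and invoke $H^2(\mathcal{Q},\mathbb{C}_{\lambda_t})=0$ for $\lambda_t\neq 0$ to absorb the cocycle by the shifts $x_l\mapsto x_l+\mu_l^{(t)}e_t$. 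The paper instead runs an explicit induction on $h=1,\dots,s$, repeatedly applying Lemma \ref{lem5} to get $[x_{h+1},x_j]\equiv 0$ and then using Jacobi identities to propagate Kronecker-delta factors. Your Koszul-type argument is cleaner, works uniformly in $s$ (including $s=2$, where there is no cocycle condition but $d^1$ is still surjective onto $\Lambda^2\mathcal{Q}^*$), and makes transparent that only the nonvanishing of $\lambda_t$ is needed, whereas Lemma \ref{lem5} formally also requires the weight vector to be non-constant.

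The one genuine gap is the case $\lambda_t=0$, which you flag but do not close, and your proposed exclusion does not work as stated. If a generator $e_{t_0}$ has $\alpha_{t_0,j}=0$ for all $j$, this means the coordinate $\alpha_{t_0}$ vanishes identically on the solution space of $S_e$; the diagonal map sending $e_{t_0}\mapsto e_{t_0}$ and all other generators to $0$ is then \emph{not} a solution of $S_e$, hence not a derivation, so no extra nil-independent diagonal derivation arises and there is no contradiction with $\dim\mathcal{Q}=\dim\mathcal{T}_{max}$ from Lemma \ref{lem111}. Condition {\bf A)} only forces the weight vectors of $e_{s+1},\dots,e_k$ to be pairwise distinct, so it excludes at most all but one zero-weight generator. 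You should be aware that the paper's own proof has exactly the same dependence --- it asserts without verification that the hypotheses of Lemma \ref{lem5} (each $\alpha_{t,\cdot}$ nonzero and non-constant) hold in this application --- so you have correctly located the delicate point; but as written your argument for it is incorrect rather than merely unfinished, and this sub-case must either be proved impossible from purity and maximality or handled separately.
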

\begin{proof} Let set
$$[x_i,x_j]\equiv\sum\limits_{t=1}^{k}\beta_{i,j}^te_t \ (mod \ N^2),\quad 1\leq i\neq j \leq s.$$

For $i,j,p$ such that $1\leq i,j,p\leq s,\ i\neq j,\ i\neq p,\ j\neq p$ we have
$$[x_p,[x_i,x_j]]\equiv[[x_p,x_i],x_j]-[[x_p,x_j],x_i]\equiv\beta_{p,i}^je_j+\sum\limits_{t=s+1}^{k}\beta_{p,i}^t\alpha_{t,j}e_t-
\beta_{p,j}^ie_i-\sum\limits_{t=s+1}^{k}\beta_{p,j}^t\alpha_{t,i}e_t\ (mod \ N^2).$$

On the other hand, it is clear that
$$[x_p,[x_i,x_j]]\equiv[x_p,\sum\limits_{t=1}^{k}\beta_{i,j}^te_t]\equiv-\beta_{i,j}^pe_p-\sum\limits_{t=s+1}^{k}\beta_{i,j}^t\alpha_{t,p}e_t\ (mod \ N^2).$$
Therefore,
$$\beta_{i,j}^p=0,\quad 1\leq i\neq j\neq p \leq s \Rightarrow
[x_i,x_j]\equiv\beta_{i,j}^ie_i+\beta_{i,j}^je_j+\sum\limits_{t=s+1}^{k}\beta_{i,j}^te_t \ (mod \ N^2), \quad 1\leq i \neq j \leq s.$$

Taking the change
$$x_i'=x_i-\sum\limits_{t=1}^{i-1}\beta_{i,t}^te_t-\sum\limits_{t=i+1}^{s}\beta_{i,t}^te_t,\ 1\leq i\leq s,$$
one can assume that
$$[x_i,x_j]\equiv\sum\limits_{t=s+1}^{k}\beta_{i,j}^te_t\ (mod \ N^2), \quad 1\leq i\neq j \leq s.$$

Applying induction by $h$ we find the expressions for the following products:
\begin{equation}\label{eq5}
\left\{\begin{array}{lll}
[x_i,x_j]\equiv0 & (mod \ N^2),&  1\leq i\leq h, \ 1\leq j\leq s,\\[1mm]
[x_i,x_j]\equiv\sum\limits_{t=s+1}^{k}\prod_{q=1}^{h}\delta_{\alpha_{t,q},0}\beta_{i,j}^te_t & (mod \ N^2), & h+1\leq i \neq j \leq s.\\[1mm]
\end{array}\right.
\end{equation}

For $h=1$ we have
$$[x_1,x_j]\equiv\sum\limits_{t=s+1}^{k}\beta_{1,j}^te_t \ (mod \ N^2), \quad 2\leq j \leq s.$$

It is easy to see that our products satisfy to the conditions of Lemma \ref{lem5} with
$u:=x_1, \ y_{i-1}:=x_i, \ \eta:=s+1, \ f_t:=e_t, \ \tau:=k-s, \ \mu_{t,j}:=\alpha_{t,j}$. Therefore, we obtain $[x_1,x_j]\equiv0, \quad 2\leq j \leq s.$

For $2\leq i<j\leq s$ we have
$$[x_1,[x_i,x_j]]=[[x_1,x_i],x_j]-[[x_1,x_j],x_i]\equiv0 \ (mod \ N^2).$$
On the other hand,
$$[x_1,[x_i,x_j]]\equiv[x_1,\sum\limits_{t=s+1}^{k}\beta_{i,j}^te_t]\equiv-\sum\limits_{t=s+1}^{k}\alpha_{t,1}\beta_{i,j}^te_t.$$
Consequently,
$$\alpha_{t,1}\beta_{i,j}^t=0,\ s+1\leq t\leq k,\ 2\leq i<j\leq s.$$
Hence,
$$[x_1,x_i]\equiv0\ (mod \ N^2),\ 2\leq i\leq s,\quad [x_i,x_j]\equiv\sum\limits_{t=s+1}^{k}\delta_{\alpha_{t,1},0}\beta_{i,j}^te_t\ (mod \ N^2), \ \  2\leq i< j \leq s.$$
Thus, \eqref{eq5} is true for $h=1$. Assuming now that \eqref{eq5} is true for any $h$ we prove it for $h+1$. Thanks to the induction assumption we have
$$[x_{h+1},x_{i}]\equiv\sum\limits_{t=s+1}^k\prod_{q=1}^{h}\delta_{\alpha_{t,q},0}\beta_{h+1,i}^te_t\ (mod \ N^2),\  h+2\leq i\leq s.$$

Assuming $u:=x_{h+1}, \ y_{i-h-1}:=x_i, \ \eta:=s-h-1, \ f_{t-s}=e_t,\ s+1\leq t\leq k,\ \tau=k-s$ with $h+2\leq i\leq s$ we are in the conditions of Lemma \ref{lem5}. Therefore, we derive
$$[x_{h+1},x_{i}]\equiv0\ (mod \ N^2),\  h+2\leq i\leq s.$$

The equalities with $h+2\leq j< i \leq s$
$$0=[x_{i},[x_j,x_{h+1}]]=[[x_i,x_j],x_{h+1}]-[[x_i,x_{h+1}],x_j]\equiv
\sum\limits_{t=s+1}^{k}\prod_{q=1}^{h}\delta_{\alpha_{t,q},0}\alpha_{t,h+1}\beta_{i,j}^te_t \ (mod \ N^2),$$
imply
$$\prod_{q=1}^{h}\delta_{\alpha_{t,q},0}\alpha_{t,h+1}\beta_{i,j}^t=0,\quad s+1\leq t\leq k,\ \ h+2\leq j< i \leq s.$$
Therefore, we obtain
$$\left\{\begin{array}{lll}
[x_i,x_j]\equiv0 & (mod \ N^2),&  1\leq i\leq h+1, \ 1\leq j\leq s,\\[1mm]
[x_i,x_j]\equiv\sum\limits_{t=s+1}^{k}\prod\limits_{q=1}^{h+1}\delta_{\alpha_{t,q},0}\beta_{i,j}^te_t & (mod \ N^2), & h+2\leq i < j\leq s,\\[1mm]
\end{array}\right.$$
which proves \eqref{eq5}. Putting $h=s$ we complete the proof of the lemma.
\end{proof}
Combining the results of Lemmas \eqref{lem7} and \eqref{lem8} we get the following corollary.
\begin{cor} Modulo $N^2$ the following take place
\begin{equation}\label{eq6}
\left\{\begin{array}{lll}
[e_i,x_i]\equiv e_i & (mod \ N^2), & 1\leq i\leq s,\\[1mm]
[e_i,x_j]\equiv \alpha_{i,j}e_i & (mod \ N^2), & s+1\leq i\leq k,\ \ 1\leq j \leq s,\\[1mm]
[x_i,x_j]\equiv 0 & (mod \ N^2), &  1\leq j< i \leq s.\\[1mm]
\end{array}\right.
\end{equation}
\end{cor}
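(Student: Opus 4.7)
The corollary is an immediate assembly of the two preceding lemmas, so the plan is essentially to extract the relevant congruences and point out that together they cover all three cases of the statement. The only content is a bookkeeping observation; no new calculation is required.

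First I would note that the first two listed congruences are literal restatements of the conclusion of Lemma \ref{lem7}. That lemma establishes $[e_i,x_i]\equiv e_i \ (\mathrm{mod}\ N^2)$ for $1\leq i\leq s$ and $[e_i,x_j]\equiv \alpha_{i,j}e_i \ (\mathrm{mod}\ N^2)$ for $s+1\leq i\leq k$ and $1\leq j\leq s$ (alongside a third conclusion $[e_i,x_j]\equiv 0 \ (\mathrm{mod}\ N^2)$ for $1\leq i\neq j\leq s$, which is not repeated here but is implicit in the same corollary). So these first two items require no additional argument beyond a reference.

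Next I would derive the third congruence $[x_i,x_j]\equiv 0 \ (\mathrm{mod}\ N^2)$ for $1\leq j<i\leq s$ as an instance of Lemma \ref{lem8}, which gives the stronger symmetric statement $[x_i,x_j]\equiv 0 \ (\mathrm{mod}\ N^2)$ for all $1\leq i,j\leq s$; restricting the indices to $1\leq j<i\leq s$ yields exactly the third line of the corollary. Since the products are skew-symmetric, the restriction $j<i$ is merely cosmetic.

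The main (and only) obstacle was already surmounted inside Lemmas \ref{lem7} and \ref{lem8} themselves (the induction arguments using condition {\bf A)} and the Jacobi identity together with the auxiliary Lemma \ref{lem5}). Consequently, the proof of the corollary amounts to the single sentence: combine Lemma \ref{lem7} with Lemma \ref{lem8}.
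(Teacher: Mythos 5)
Your proposal is correct and matches the paper exactly: the paper introduces this corollary with the single sentence ``Combining the results of Lemmas \ref{lem7} and \ref{lem8} we get the following corollary,'' which is precisely your argument --- the first two lines are the conclusion of Lemma \ref{lem7} and the third is the case $j<i$ of Lemma \ref{lem8}.
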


Now we give the description of maximal solvable extensions of a finite-dimensional non-split pure non-characteristically nilpotent Lie algebra $\mathcal{N}$.

\begin{thm} \label{mainthm} Let $\mathcal{R}$ be a complex maximal solvable extension of $n$-dimensional non-split nilpotent (non-characteristically nilpotent) Lie algebra $\mathcal{N}$ which satisfies the condition ${\bf A)}$. Then $\mathcal{R}$ admits a basis $\{e_1, \dots, e_s, \dots, e_{k}, \dots, e_{n}, x_1, \dots, x_s\}$ such that  the table of multiplications of $\mathcal{R}$ in this basis has the following form:
$$\left\{\begin{array}{lll}\label{eq9}
[e_i,e_j]=\sum\limits_{t=k+1}^{n}c_{i,j}^te_t,& 1\leq i\neq j\leq n,\\[1mm]
[e_i,x_j]=\alpha_{i,j}e_i,& 1\leq i\leq n,\ \ 1\leq j \leq s,\\[1mm]
\end{array}\right.$$
where $codim\mathcal{N}=s$ and $\alpha_{i,j}$ are coordinates of the basic vectors in the fundamental solutions to the system $S_e$.
\end{thm}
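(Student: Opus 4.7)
The plan is to take the mod-$\mathcal{N}^2$ structural data supplied by Lemmas \ref{lem7}--\ref{lem8} and promote it to exact relations by an inductive lifting along the lower-central-series filtration $\mathcal{N}\supseteq\mathcal{N}^2\supseteq\cdots\supseteq \mathcal{N}^c=0$, exploiting at every stage the separation of diagonal weights provided by condition {\bf A)}.

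First, the bracket $[e_i,e_j]=\sum_{t=k+1}^{n}c_{i,j}^{t}e_{t}$ is immediate from the choice of natural basis: since $e_{1},\dots,e_{k}$ are generators, the subset $\{e_{k+1},\dots,e_{n}\}$ forms a basis of $\mathcal{N}^{2}$, and $[\mathcal{N},\mathcal{N}]\subseteq\mathcal{N}^{2}$.

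Next, to establish $[e_i,x_j]=\alpha_{i,j}e_i$ exactly, I would start from Lemma \ref{lem7}, which gives this congruence mod $\mathcal{N}^{2}$ for $1\leq i\leq k$, and propagate it to non-generator basis elements via the Leibniz rule: each non-generator $e_t$ is a right-normed word in the generators, so combining the diagonal action on generators with the additive weight identity $\alpha_{t,j}=\sum_\ell \alpha_{i_\ell,j}$ (coming from the system $S_e$) yields $[e_t,x_j]\equiv \alpha_{t,j}e_t\pmod{\mathcal{N}^{r+1}}$, where $r$ is the level of $e_t$ in the lower central series. To remove the remaining error terms one performs, at each filtration depth $r$, a change $e_t\mapsto e_t+u_t$ with $u_t\in\mathcal{N}^{r+1}$ chosen so as to cancel the nondiagonal corrections simultaneously for all $j=1,\dots,s$. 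The existence of such $u_t$ is exactly the analogue, one degree deeper in the filtration, of the change-of-basis argument executed in Lemmas \ref{lem6}--\ref{lem7}, and uses condition {\bf A)} in the same manner: {\bf A)} guarantees that basis elements with distinct weight tuples $(\alpha_{t,1},\dots,\alpha_{t,s})$ are separated by at least one coordinate, so the nondiagonal part can always be absorbed into the diagonal one. Iterating through the finitely many filtration levels yields the exact equalities $[e_i,x_j]=\alpha_{i,j}e_i$ for every $i$ and $j$.

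For the (implicit) relation $[x_i,x_j]=0$, solvability of $\mathcal{R}$ with nilradical $\mathcal{N}$ forces $[x_i,x_j]\in[\mathcal{R},\mathcal{R}]\subseteq\mathcal{N}$, and Lemma \ref{lem8} refines this to $[x_i,x_j]\in\mathcal{N}^{2}$. Using the exact diagonal action just obtained, the Jacobi identity gives
\[
[[x_i,x_j],e_t]=[x_i,[x_j,e_t]]-[x_j,[x_i,e_t]]=\alpha_{t,i}\alpha_{t,j}e_t-\alpha_{t,j}\alpha_{t,i}e_t=0,
\]
so $[x_i,x_j]\in Z(\mathcal{N})\cap\mathcal{N}^{2}$. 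Decomposing $Z(\mathcal{N})\cap\mathcal{N}^{2}$ into common weight spaces under the now-diagonal action of $\mathcal{Q}$ and performing a change $x_j\mapsto x_j+v_j$ with $v_j$ chosen weight-by-weight eliminates $[x_i,x_j]$; any residual indecomposable central piece would contradict the maximality of $\mathcal{R}$ via Lemma \ref{lem111}, since it would let one enlarge the torus.

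The main obstacle is the simultaneous cancellation across all $s$ parameters at every filtration depth; this is precisely where condition {\bf A)} is indispensable, ensuring the weight separation that makes each inductive change of basis well-defined and compatible across the $x_j$'s.
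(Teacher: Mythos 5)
Your overall strategy --- establish the relations modulo $\mathcal{N}^2$ via Lemmas \ref{lem7} and \ref{lem8}, then lift them level by level along the lower central series, absorbing off-diagonal corrections by basis changes governed by weight separation --- is exactly the strategy of the paper's proof, which runs an induction on $m$ carrying the congruences modulo $\mathcal{N}^m$ up to the nilindex and invokes Lemmas \ref{lem5} and \ref{lem6} at each stage.

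There is, however, a genuine ordering problem in your write-up. You first claim to obtain the exact equalities $[e_i,x_j]=\alpha_{i,j}e_i$ by the filtration induction, and only afterwards treat $[x_i,x_j]$, using for it only the crude information $[x_i,x_j]\in\mathcal{N}^2$ from Lemma \ref{lem8}. But the ``simultaneous cancellation across all $s$ parameters'' that you correctly single out as the main obstacle is not a consequence of condition {\bf A)} alone: weight separation only lets you choose, for each offending weight component $\mu\neq(\alpha_{t,1},\dots,\alpha_{t,s})$, one index $j_0$ at which that component can be absorbed into $e_t$. The compatibility of the remaining corrections across the other indices $j$ is extracted from Jacobi identities of the form $[[e_t,x_j],x_{j'}]-[[e_t,x_{j'}],x_j]=[e_t,[x_j,x_{j'}]]$, and for these to give usable information modulo $\mathcal{N}^{m+1}$ one must already know $[x_j,x_{j'}]\equiv 0 \ (mod \ \mathcal{N}^{m})$; knowing only $[x_j,x_{j'}]\in\mathcal{N}^2$ is insufficient once $m\geq 3$. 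This is precisely why the paper's induction hypothesis carries all three families of congruences --- including $[x_i,x_j]\equiv 0$ --- simultaneously modulo $\mathcal{N}^m$ and lifts them together, repeating the argument of Lemma \ref{lem8} at each depth. Your proof becomes correct once the two inductions are interleaved in this way; as written it is circular. (Your endgame for $[x_i,x_j]$, namely that $ad_{[x_i,x_j]}{}_{|\mathcal{N}}=[ad_{x_i}{}_{|\mathcal{N}},ad_{x_j}{}_{|\mathcal{N}}]=0$ once the action is diagonal, so that $[x_i,x_j]$ is central in $\mathcal{N}$, is a clean simplification of the final step, but it cannot substitute for the congruences needed inside the induction.)
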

\begin{proof} First we prove by induction the validity of the following products modulo $N^m$ for any $m$ less or equal to the nilindex of $N$.

\begin{equation}\label{eqmain}
\left\{\begin{array}{lll}
[e_i,x_i]\equiv e_i & (mod \ N^2), & 1\leq i\leq s, \\[1mm]
[e_i,x_j]\equiv \alpha_{i,j}e_i & (mod \ N^2),& s+1\leq i\leq k,\   1\leq j \leq s,\\[1mm]
[x_i,x_j]\equiv 0 & (mod \ N^2),  & 1\leq j< i \leq s.\\[1mm]
\end{array}\right.
\end{equation}
The base of the induction holds due to \eqref{eq6}. Let us suppose that \eqref{eqmain} is true for $m$. Then we have
$$\left\{\begin{array}{lll}
[e_i,x_i]\equiv e_i+\sum\limits_{t=1}^{s_m}\alpha_{i,i}^tf_t & (mod \ N^{m+1}), & 1\leq i\leq s,\\[1mm]
[e_i,x_j]\equiv \sum\limits_{t=1}^{s_m}\alpha_{i,j}^tf_t & (mod \ N^{m+1}), & 1\leq i\neq j\leq s,\\[1mm]
[e_i,x_j]\equiv \alpha_{i,j}e_i+\sum\limits_{t=1}^{s_m}\alpha_{i,j}^tf_t & (mod \ N^{m+1}), & s+1\leq i\leq k,\ \ 1\leq j \leq s,\\[1mm]
[x_i,x_j]\equiv \sum\limits_{t=1}^{s_m}\beta_{i,j}^tf_t & (mod \ N^{m+1}), &  1\leq j< i \leq s,\\[1mm]
\end{array}\right.
$$
where $f_1,\dots,f_{s_m}$ is a basis of $N$ which lies in $N^{m}\setminus N^{m+1}$.

Without loss of generality on can take
$$f_i=[[\ldots[[e_{i_1},e_{i_2}],e_{i_3}],\ldots,e_{i_{m-1}}],e_{i_m}],\quad 1\leq i\leq s_m,\ 1\leq i_t\leq k,\ 1\leq t\leq m.$$

For $i,j$ ($1\leq i\leq s_m,\ 1\leq j\leq s$) we have
$$\begin{array}{ll}
[f_i,x_j]&=[[[\ldots[[e_{i_1},e_{i_2}],e_{i_{3}}],\ldots],e_{i_m}],x_j]\\[1mm]
&=\mbox{applying} \ (m-1)-\mbox{times the Jacoby identity one gets}\\[1mm]
&\equiv(\alpha_{i_{m},j}+\alpha_{i_{m-1},j}+\ldots+\alpha_{i_{2},j}+\alpha_{i_{1},j})[[\ldots[[e_{i_1},e_{i_2}],e_{i_{3}}],\ldots],e_{i_m}] \\[1mm]
&\equiv(\alpha_{i_{m},j}+\alpha_{i_{m-1},j}+\ldots+\alpha_{i_{2},j}+\alpha_{i_{1},j})f_i \quad (mod \ N^m).
\end{array}$$
Therefore,
$$[f_i,x_j]\equiv\lambda_{i,j}f_i \quad (mod \ N^m), \ 1\leq i\leq s_m,\ 1\leq j\leq s.$$
Note here that for any $i,j$ ($1\leq i\neq j \leq s_m)$ there exists $t$ ($1\leq t\leq s$) such that $\lambda_{i,t}\neq \lambda_{j,t}.$

Now we write out the obtained products
\begin{equation}\label{eq12345}
\left\{\begin{array}{lll}
[e_i,x_i]\equiv e_i+\sum\limits_{t=1}^{s_m}\alpha_{i,i}^tf_t & (mod \ N^{m+1}), & 1\leq i\leq s,\\[1mm]
[e_i,x_j]\equiv \sum\limits_{t=1}^{s_m}\alpha_{i,j}^tf_t & (mod \ N^{m+1}), & 1\leq i\neq j\leq s,\\[1mm]
[f_i,x_j]\equiv \lambda_{i,j}f_i & (mod \ N^{m+1}), & 1\leq i\leq s_m,\ 1\leq j\leq s.
\end{array}\right.
\end{equation}

Assuming $s_m:=\tau, \ \lambda_{t,j}:=\mu_{t,j}, \ \alpha_{i,j}^t:=\beta_{i,j}^t$ in the products \eqref{eq12345} we note that products satisfy the conditions of Lemma \ref{lem6}. Therefore, applying  Lemma \ref{lem6} we derive
$$[e_i,x_i]=e_i,\quad [e_i,x_j]=0 \quad (mod \ N^{m+1}),\quad 1\leq i\neq j\leq s.$$

Making the change as follows
$$e_i'=e_i+\sum\limits_{t=1, \lambda_{t,1}\neq \alpha_{i,1}}^{s_m}\frac{\alpha_{i,1}^t}{\alpha_{i,1}-\lambda_{t,1}}f_t,\quad s+1\leq i\leq k,$$
we obtain
$$[e_i',x_1]\equiv\alpha_{i,1}e_i'-\sum\limits_{t=1, \lambda_{t,1}\neq \alpha_{i,1}}^{s_m}\alpha_{i,1}^tf_t+\sum\limits_{t=1}^{s_m}\alpha_{i,1}^tf_t
=\alpha_{i,1}e_i'+\sum\limits_{t=1}^{s_m}\delta_{\alpha_{i,1},\lambda_{t,1}}\alpha_{i,1}^tf_t.$$

Hence,
$$[e_i,x_1]\equiv\alpha_{i,1}e_i+
\sum\limits_{t=1}^{s_m}\delta_{\alpha_{i,1},\lambda_{t,1}}\alpha_{i,1}^tf_t \quad (mod \ N^{m+1}), \quad s+1\leq i\leq k.$$

From the following equalities with $s+1\leq i\leq k,\ 2\leq j\leq s$
$$\begin{array}{ll}
0&\equiv[e_i,[x_1,x_j]]=[[e_i,x_1],x_j]-[[e_i,x_j],x_1]\\[1mm]
&\equiv\alpha_{i,1}(\alpha_{i,j}e_i+\sum\limits_{t=1}^{s_m}\alpha_{i,j}^tf_t)+\sum\limits_{t=1}^{s_m}\delta_{\alpha_{i,1},\lambda_{t,1}}\lambda_{t,j}\alpha_{i,1}^tf_t\\[1mm]
&-\alpha_{i,j}(\alpha_{i,1}e_i+\sum\limits_{t=1}^{s_m}\delta_{\alpha_{i,1},\lambda_{t,1}}\alpha_{i,1}^tf_t)-
\sum\limits_{t=1}^{s_m}\lambda_{t,1}\alpha_{i,j}^tf_t\\[1mm]
&\equiv\sum\limits_{t=1}^{s_m}\Big((\alpha_{i,1}-\lambda_{t,1})\alpha_{i,j}^t+
\delta_{\alpha_{i,1},\lambda_{t,1}}(\lambda_{t,j}-\alpha_{i,j})\alpha_{i,1}^t\Big)f_t \ (mod \ N^{m+1}),
\end{array}$$
we derive
$$(\alpha_{i,1}-\lambda_{t,1})\alpha_{i,j}^t=0, \quad \delta_{\alpha_{i,1},\lambda_{t,1}}\alpha_{i,1}^t=0,\ 1\leq t\leq s_m.$$
These imply
\begin{equation}\label{eq8}
\left\{\begin{array}{lll}
[e_i,x_1]\equiv\alpha_{i,1}e_i & (mod \ N^{m+1}), & s+1\leq i\leq k,\\[1mm]
[e_i,x_j]\equiv\alpha_{i,j}e_i+\sum\limits_{t=1}^{s_m}\delta_{\alpha_{i,1},\lambda_{t,1}}\alpha_{i,j}^tf_t & (mod \ N^{m+1}),& s+1\leq i\leq k,\ \ 2\leq j \leq s.\\[1mm]
\end{array}\right.
\end{equation}

By induction on $h$ we prove the following:
\begin{equation}\label{eq144}
\left\{\begin{array}{llll}
[e_i,x_j]\equiv\alpha_{i,j}e_i & (mod \ N^{m+1}), & s+1\leq i\leq k, \ 1\leq j\leq h,\\[1mm]
[e_i,x_j]\equiv\alpha_{i,j}e_i+\sum\limits_{t=1}^{s_m}
\prod_{q=1}^{h}\delta_{\alpha_{i,q},\lambda_{t,q}}\alpha_{i,j}^tf_t & (mod \ N^{m+1}), & s+1\leq i\leq k, \ h+1\leq j \leq s.\\[1mm]
\end{array}\right.
\end{equation}

The base of the induction holds due to \eqref{eq8}. Let us prove \eqref{eq144} for $h+1$ assuming that it is true for $h$.

Taking the change
$$e_i'=e_i+\sum\limits_{t=1, \lambda_{t,h+1}\neq \alpha_{i,h+1}}^{s_m}\frac{
\prod_{q=1}^{h}\delta_{\alpha_{i,q},\lambda_{t,q}}\alpha_{i,h+1}^t}
{\alpha_{i,h+1}-\lambda_{t,h+1}}f_t,\quad s+1\leq i\leq k,$$
we obtain
$$[e_i',x_{j}]\equiv\alpha_{i,j}e_i+\sum\limits_{t=1, \lambda_{t,h+1}\neq \alpha_{i,h+1}}^{s_m}\frac{
\prod_{q=1}^{h}\delta_{\alpha_{i,q},\lambda_{t,q}}\alpha_{i,h+1}^t}
{\alpha_{i,h+1}-\lambda_{t,h+1}}\lambda_{t,j}f_t=\alpha_{i,j}e_i',\ 1\leq j\leq h,$$

$$[e_i',x_{h+1}]\equiv\alpha_{i,h+1}e_i'+\sum\limits_{t=1}^{s_m}\prod_{q=1}^{h+1}\delta_{\alpha_{i,q},\lambda_{t,q}}\alpha_{i,h+1}^tf_t \ (mod \ N^{m+1}).$$
Hence,
$$\left\{\begin{array}{llll}
[e_i,x_j]\equiv \alpha_{i,j}e_i & (mod \ N^{m+1}),  & s+1\leq i\leq k, \ 1\leq j\leq h ,\\[1mm]
[e_i,x_{h+1}]\equiv\alpha_{i,h+1}e_i+\sum\limits_{t=1}^{s_m}\prod_{q=1}^{h+1}\delta_{\alpha_{i,q},\lambda_{t,q}}\alpha_{i,h+1}^tf_t & (mod \ N^{m+1}), & s+1\leq i\leq k,  \\[1mm]
[e_i,x_j]\equiv\alpha_{i,j}e_i+\sum\limits_{t=1}^{s_m}\prod_{q=1}^{h}\delta_{\alpha_{i,q},\lambda_{t,q}}\alpha_{i,j}^tf_t & (mod \ N^{m+1}),& s+1\leq i\leq k,\ h+2\leq j \leq s.\\[1mm]
\end{array}\right.
$$

Consider the equalities with $s+1\leq i\leq k,\ h+2\leq j\leq s$
$$0\equiv[e_i,[x_{h+1},x_j]]\equiv[[e_i,x_{h+1}],x_j]-[[e_i,x_j],x_{h+1}]$$
$$\equiv\sum\limits_{t=1}^{s_m}\prod_{q=1}^{h}\delta_{\alpha_{i,q},\lambda_{t,q}}\Big((\alpha_{i,h+1}-\lambda_{t,h+1})\alpha_{i,j}^t+
\delta_{\alpha_{i,h+1},\lambda_{t,h+1}}(\lambda_{t,j}-\alpha_{i,j})\alpha_{i,h+1}^t\Big)f_t \ (mod \ N^{m+1}) ,$$

Since for any $i,t$ ($s+1\leq i\leq k,\ 1\leq t \leq s_m$) there exists $j$ ($1\leq j\leq s$) such that $\alpha_{i,j}\neq \lambda_{t,j}$, from the above congruences we derive
$$\prod_{q=1}^{h}\delta_{\alpha_{i,q},\lambda_{t,q}}(\alpha_{i,h+1}-\lambda_{t,h+1})\alpha_{i,j}^t=0,\quad \prod_{q=1}^{h+1}\delta_{\alpha_{i,q},\lambda_{t,q}}\alpha_{i,h+1}^t=0,\quad 1\leq t\leq s_m.$$
These imply
$$[e_i,x_j]\equiv\alpha_{i,j}e_i \quad (mod \ N^{m+1}), \quad [e_i,x_{h+1}]\equiv\alpha_{i,h+1}e_i \quad (mod \ N^{m+1}), \quad s+1\leq i\leq k, \ 1\leq j\leq h,$$
$$[e_i,x_j]\equiv\alpha_{i,j}e_i+\sum\limits_{t=1}^{s_m}
\prod_{q=1}^{h+1}\delta_{\alpha_{i,q},\lambda_{t,q}}\alpha_{i,j}^tf_t \quad (mod \ N^{m+1}),\quad s+1\leq i\leq k,\ h+2\leq j \leq s.$$

Thus, \eqref{eq144} is done. Now putting $h=s$ in \eqref{eq144} we get the proof of \eqref{eqmain} for $h+1$.  Therefore, \eqref{eqmain} is proved too. Setting in \eqref{eqmain} $m$ to be equal to the nilindex of $N$ we get the products

$$[e_i,x_i]=e_i, \quad 1\leq i\leq s, \quad [e_i,x_j]=\alpha_{i,j}e_i, \quad  s+1\leq i\leq n,\ \ 1\leq j \leq s.$$

%

Applying the arguments for the products $[x_i,x_j]$ as those in Lemma \ref{lem8} we complete the proof of the theorem.
\end{proof}

Now we consider more general case, that is, the case where the nilradical is split.

\begin{prop}\label{lie} Let $\mathcal{R}=\mathcal{N}\oplus \mathcal{Q}$ be a complex maximal extension of finite-dimensional split pure non-characteristically nilpotent Lie algebra with $\mathcal{N}=\bigoplus\limits_{t=1}^{p}\mathcal{N}_t$, where each of $\mathcal{N}_t$ is a non-split and non-characteristically nilpotent ideal of $\mathcal{N}$ which satisfies the condition {\bf A)}. Then $\mathcal{R}$ is a unique (up to isomorphism) Lie algebra and it has the form $\mathcal{R}=\bigoplus\limits_{t=1}^{p}\mathcal{R}_t,$ where $\mathcal{R}_t=\mathcal{N}_t\oplus \mathcal{Q}_t$ is the maximal extension of the nilpotent algebra $\mathcal{N}_t$.
\end{prop}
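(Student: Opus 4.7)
The plan is to reduce the split case to the non-split case already handled by Theorem \ref{mainthm}. First, since $\mathcal{N}=\bigoplus_{t=1}^p\mathcal{N}_t$ as a Lie algebra, the commutator structure is block diagonal: taking the natural basis of $\mathcal{N}$ to be the union of natural bases of the summands, every structure constant $c_{i,j}^\ell$ vanishes when the indices $i,j$ straddle two different summands. Consequently, the linear system $S_e$ decouples into $p$ independent subsystems $S_e^{(t)}$ attached to each $\mathcal{N}_t$, and its rank equals the sum of the ranks. Applying Lemma \ref{lem111} to $\mathcal{N}$ and to each $\mathcal{N}_t$ then yields $\dim\mathcal{Q}=\sum_{t=1}^p\dim\mathcal{Q}_t$.

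Next I would choose a basis of $\mathcal{Q}$ adapted to the decomposition. A basis of fundamental solutions of $S_e$ can be taken as the union of bases of fundamental solutions of the subsystems $S_e^{(t)}$, each padded with zeros on the coordinates of the other summands. Lifting these solutions to diagonal derivations of $\mathcal{N}$ produces elements $x_1,\dots,x_s$ grouped into blocks $\mathcal{Q}_t$ such that the diagonalizable part of $ad_{x|\mathcal{N}_u}$ vanishes whenever $x\in\mathcal{Q}_t$ and $u\neq t$. With this adapted basis, I would run the proof of Theorem \ref{mainthm} block by block: inside each $\mathcal{R}_t=\mathcal{N}_t\oplus\mathcal{Q}_t$ the theorem produces the table $[e_i,x_j]=\alpha_{i,j}e_i$ and identifies $\mathcal{R}_t$ (up to isomorphism) as the maximal solvable extension of $\mathcal{N}_t$.

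What remains is to eliminate the cross-block products $[\mathcal{Q}_a,\mathcal{N}_b]$ and $[\mathcal{Q}_a,\mathcal{Q}_b]$ for $a\neq b$. I would do this by induction on the lower central series of $\mathcal{N}$, directly mimicking the normalization steps of Lemmas \ref{lem7} and \ref{lem8} and of Theorem \ref{mainthm}: the Jacobi identity forces each residual correction term in $\mathcal{N}^m$ to commute with torus elements of another block, and condition {\bf A)} applied within $\mathcal{N}_b$ provides an element of $\mathcal{Q}_b$ whose weights separate any two of the basis vectors of $\mathcal{N}_b$, forcing the term to vanish. The principal obstacle, and the only place where the argument is more delicate than for a single block, is verifying that the basis adjustments $e_i'=e_i+\sum C_\ell e_\ell$ used in the normalization stay internal to the block to which $e_i$ belongs. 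This is automatic because the correction elements appearing in those changes are iterated brackets of basis elements of a single $\mathcal{N}_b$ and $[\mathcal{N}_a,\mathcal{N}_b]=0$ for $a\neq b$, so no mixing between blocks can occur. Assembling the blocks then gives $\mathcal{R}=\bigoplus_{t=1}^p\mathcal{R}_t$, yielding both existence and uniqueness up to isomorphism.
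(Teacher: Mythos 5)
Your proposal is correct and follows essentially the same route as the paper: decouple the system $S_e$ into blocks, choose a torus basis adapted to the decomposition, apply Theorem \ref{mainthm} to each $\mathcal{R}_t=\mathcal{N}_t\oplus\mathcal{Q}_t$, and then kill the residual cross-block products using Jacobi identities together with the weight-separation supplied by condition \textbf{A)}. The only cosmetic difference is that the paper disposes of the cross terms in one pass (a direct Jacobi computation for $[\mathcal{Q}_a,\mathcal{Q}_a]$ valued in the other block, followed by two applications of Lemma \ref{lem5} for $[\mathcal{Q}_a,\mathcal{Q}_b]$) rather than re-running the lower-central-series induction, but the underlying mechanism is identical.
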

\begin{proof} To avoid  extra indices, we prove the proposition for the case of $p=2$.
Let $\{e_1, e_2, \dots, e_{s_1},\dots, e_{k_1},e_{k_1+1},\dots, e_{n_1}\}$ and
$\{f_1, f_2, \dots, f_{s_2},\dots, f_{k_2},f_{k_2+1},\dots, f_{n_2}\}$ be the natural bases of $\mathcal{N}_1$ and $\mathcal{N}_2,$ respectively. For arbitrary $d\in Der(\mathcal{N})$ similarly to that of Section \ref{sec3}
we have $d=d_0+d_1$ with diagonalizable and nilpotent derivations of $\mathcal{N}$. Since diagonal elements of $d_0$ (after diagonalization) satisfy the System  $S_{\{e,f\}}$ and $\mathcal{N}$ is split we conclude
$S_{\{e,f\}}=\left\{\begin{array}{ll}
         S_e,\\[1mm]
         S_f.\\[1mm]
        \end{array}\right.$

We set $$rank(S_e)=n_1-s_1, \quad rank(S_f)=n_2-s_2.$$
Then the fundamental system of solutions to $S_{\{e,f\}}$ consists of the following
$$\left(\begin{array}{cccccccccc}
            \alpha_{i_1}&\cdots & 0 & 0 &\cdots&0\\[1mm]
            \vdots&\ddots&\vdots&\vdots&\ddots&\vdots\\[1mm]
            0&\cdots&\alpha_{i_{n_1}}&0&\cdots&0\\[1mm]
            0&\cdots&0&\beta_{j_1}&\cdots&0\\[1mm]
            \vdots&\ddots&\vdots&\vdots&\ddots&\vdots\\[1mm]
            0&\cdots&0&0&\cdots&\beta_{j_{n_2}}\\[1mm]
            \end{array}\right), 1\leq i \leq n_1, 1\leq j \leq n_2.$$
We consider $\mathcal{Q}_1=Span_{\mathbb{C}}\{x_1,\dots,x_{s_1}\}$ and $\mathcal{Q}_2=Span_{\mathbb{C}}\{y_1,\dots,y_{s_2}\}$ such that
$$ad_{x_i}:\left(\begin{array}{cccccccccc}
           \alpha_{i_1}&\cdots & 0 \\[1mm]
           \vdots&\ddots&\vdots\\[1mm]
           0&\cdots&\alpha_{i_{n_1}}\\[1mm]
           \end{array}\right), \ 1\leq i \leq n_1,  \quad
  ad_{y_j}:\left(\begin{array}{cccccccccc}
           \beta_{j_1}&\cdots&0\\[1mm]
           \vdots&\ddots&\vdots\\[1mm]
           0&\cdots&\beta_{j_{n_2}}\\[1mm]
           \end{array}\right), \ 1\leq j \leq n_2, .$$
Applying Theorem \ref{mainthm} for the solvable Lie subalgebras $\mathcal{R}_i=\mathcal{N}_i\oplus \mathcal{Q}_i, \ i=1, 2$ we can assume that
\begin{equation}\left\{\begin{array}{ll}\label{3.16}
[e_i,x_j]=\alpha_{i,j}e_i,& 1\leq i\leq n_1,\  1\leq j\leq s_1, \\[1mm]
[f_i,y_j]=\beta_{i,j}f_i,& 1\leq i\leq n_2,\  1\leq j\leq s_2, \\[1mm]
[x_i,x_j]=\sum\limits_{q=1}^{n_2}\mu_{i,j}^{1,q}f_{q},& 1\leq i,j\leq s_1,\\[1mm]
[y_i,y_j]=\sum\limits_{q=1}^{n_1}\mu_{i,j}^{2,q}e_{q},& 1\leq i,j\leq s_1,\\[1mm]
[x_i,y_j]=\sum\limits_{q=1}^{n_1}\gamma_{i,j}^{q}e_q+\sum\limits_{q=1}^{n_2}\theta_{i,j}^{q}f_q,& 1\leq i\leq s_1,\  1\leq j\leq s_2.\\[1mm]
\end{array}\right.\end{equation}

From the following equalities for $1\leq i,j\leq s_1,\ 1\leq k\leq s_2$:
$$\sum\limits_{q=1}^{n_1}\gamma_{j,k}^{q}\alpha_{q,i}e_q=
[x_i,-\sum\limits_{q=1}^{n_1}\gamma_{j,k}^{q}e_q-\sum\limits_{q=1}^{n_2}
\theta_{j,k}^{q}f_q]=
[x_i,[y_k,x_j]]=[[x_i,y_k],x_j]-[[x_i,x_j],y_k]$$
$$=[\sum\limits_{q=1}^{n_1}\gamma_{i,k}^{q}e_q+\sum\limits_{q=1}^{n_2}\theta_{i,k}^{q}f_q,x_j]-
[\sum\limits_{q=1}^{n_2}\mu_{i,j}^{1,q}f_{q},y_k]=
\sum\limits_{q=1}^{n_1}\alpha_{q,j}\gamma_{i,k}^{q}e_q-\sum\limits_{q=1}^{n_2}\beta_{q,k}
\mu_{i,j}^{1,q}f_{q},$$
we derive
$$[x_i,x_j]=0,\quad  1\leq i,j\leq s_1.$$

Similarly, from the equality
$$[y_i,[x_k,y_j]]=[[y_i,x_k],y_j]-[[y_i,y_j],x_k], \quad 1\leq i,j\leq s_2,\ 1\leq k\leq s_1,$$
we deduce
$$[y_i,y_j]=0,\quad  1\leq i,j\leq s_2.$$

Considering
$$[x_i,y_j]\equiv\sum\limits_{q=1}^{n_2}\theta_{i,j}^{q}f_q, \quad  1\leq i\leq s_1,\  1\leq j\leq s_2 \quad \ (mod \ N_1),$$
we realize that we are in the conditions of Lemma \ref{lem5} (by assuming $u:=x_i, \ 1\leq i\leq s_1$, $\eta:=s_2,\  \tau:=n_2$). Therefore, we obtain
$[x_i,y_j]\equiv 0, \   1\leq i\leq s_1,\  1\leq j\leq s_2 \ (mod \ N_1).$  Hence,
$$[x_i,y_j]=\sum\limits_{q=1}^{n_1}\gamma_{i,j}^{q}e_q, \quad 1\leq i\leq s_1,\  1\leq j\leq s_2.$$

Now applying again Lemma \ref{lem5} to the products
$$[x_i,y_j]=\sum\limits_{q=1}^{n_1}\gamma_{i,j}^{q}e_q, \quad 1\leq i\leq s_1,\  1\leq j\leq s_2.$$
we get
$$[y_j,x_i]=0,\ \  1\leq i\leq s_1,\  1\leq j\leq s_2.$$

Thus,
$$\left\{\begin{array}{ll}\label{3.16}
[e_i,x_j]=\alpha_{i,j}e_i,& 1\leq i\leq n_1,\  1\leq j\leq s_1, \\[1mm]
[f_i,y_j]=\beta_{i,j}f_i,& 1\leq i\leq n_2,\  1\leq j\leq s_2. \\[1mm]
\end{array}\right.$$
\end{proof}

From Theorem \ref{mainthm} and Proposition \ref{lie} we conclude that the complex maximal solvable extension $\mathcal{R}$ of the pure non-characteristically nilpotent nilradical $\mathcal{N}$ which satisfies the condition {\bf A)} admits a basis such that $ad(\mathcal{Q})$ is nothing but the maximal torus of the nilradical. Now applying the result of the conjugacy of maximal torus on $\mathcal{N}$ \cite{Mostow} we obtain partial positive confirmation of \v{S}nobl's conjecture.

Thus, we have proved the following result.

\begin{thm} \label{torus}
A complex maximal solvable extension of a finite-dimensional pure non-characteristically nilpotent Lie algebra which satisfies the condition {\bf A)} is unique (up to isomorphism) and this algebra is isomorphic to the algebra $\mathcal{R}_{\mathcal{T}_{max}}$.
\end{thm}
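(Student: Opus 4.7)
The plan is to derive Theorem \ref{torus} essentially as a corollary of Theorem \ref{mainthm}, Proposition \ref{lie}, and Mostow's conjugacy theorem for maximal tori. The statement has two parts: the structural identification $\mathcal{R}\simeq \mathcal{R}_{\mathcal{T}_{max}}$, and the uniqueness up to isomorphism. Both will follow once I show that the abelian complement $\mathcal{Q}$ acts on $\mathcal{N}$ as a maximal torus of $\mathcal{N}$.

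First, I will reduce to the non-split case via Proposition \ref{lie}: if $\mathcal{N}=\bigoplus_{t=1}^p \mathcal{N}_t$, then $\mathcal{R}=\bigoplus_{t=1}^p \mathcal{R}_t$ with each $\mathcal{R}_t$ a maximal solvable extension of the non-split factor $\mathcal{N}_t$. Since direct sums of $R_{\mathcal{T}_{max}}$'s of the factors give $R_{\mathcal{T}_{max}}$ of $\mathcal{N}$ (the maximal torus of a direct sum is the direct sum of maximal tori), it suffices to prove the statement in the non-split case handled by Theorem \ref{mainthm}.

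Now, in the non-split case Theorem \ref{mainthm} already gives a basis in which
\[
[e_i,x_j]=\alpha_{i,j}e_i,\qquad 1\le i\le n,\ 1\le j\le s,
\]
with $(\alpha_{1,j},\dots,\alpha_{n,j})$ running through a fundamental system of solutions to $S_e$. Consequently each $\mathrm{ad}_{x_j}|_{\mathcal{N}}$ is diagonal in the natural basis, $[\mathcal{Q},\mathcal{Q}]=0$, and the $s$ diagonal operators are linearly independent. So $\mathcal{T}:=\mathrm{ad}_{\mathcal{Q}}|_{\mathcal{N}}$ is an abelian subalgebra of $\mathrm{Der}(\mathcal{N})$ consisting of semisimple derivations with $\dim \mathcal{T}=s$. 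By Lemma \ref{lem111} combined with the equality $\dim \mathcal{T}_{max}=\dim\mathcal{N}-r\{\mathcal{L}\}$ recalled from \cite{Leger1}, we have $s=\dim\mathcal{T}_{max}$, so $\mathcal{T}$ is itself a maximal torus of $\mathcal{N}$. This immediately gives the identification $\mathcal{R}\simeq \mathcal{N}\oplus\mathcal{T}=R_{\mathcal{T}_{max}}$ (as the center of $\mathcal{R}$ is trivial, the map $\mathcal{Q}\to\mathcal{T}$, $x\mapsto\mathrm{ad}_x|_{\mathcal{N}}$, is an isomorphism, and bracket relations match by construction).

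For uniqueness, let $\mathcal{R}$ and $\tilde{\mathcal{R}}$ be two complex maximal solvable extensions of $\mathcal{N}$ satisfying \textbf{A)}. By the previous paragraph each is isomorphic to $\mathcal{N}\oplus\mathcal{T}$ and $\mathcal{N}\oplus\tilde{\mathcal{T}}$ respectively, where $\mathcal{T},\tilde{\mathcal{T}}$ are maximal tori on $\mathcal{N}$. By the Mostow conjugacy theorem for maximal tori on a nilpotent Lie algebra \cite{Mostow}, there exists $\varphi\in\mathrm{Aut}(\mathcal{N})$ with $\varphi\mathcal{T}\varphi^{-1}=\tilde{\mathcal{T}}$. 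Extending $\varphi$ to $\mathcal{N}\oplus\mathcal{T}\to \mathcal{N}\oplus\tilde{\mathcal{T}}$ by $(n,t)\mapsto(\varphi(n),\varphi t\varphi^{-1})$ yields a Lie algebra isomorphism, so $\mathcal{R}\simeq\tilde{\mathcal{R}}$.

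The main technical step — the derivation of the explicit multiplication table $[e_i,x_j]=\alpha_{i,j}e_i$, $[x_i,x_j]=0$ from the raw upper-triangular form — is exactly what Theorem \ref{mainthm} and Proposition \ref{lie} accomplish, so for this theorem the only non-trivial remaining ingredient is invoking Mostow's conjugacy result correctly; the potential pitfall is checking that the maximality condition $s=\dim\mathcal{T}_{max}$ forces $\mathcal{T}$ to be a \emph{maximal} torus rather than merely a torus, which is where Lemma \ref{lem111} and the Leger--Togo dimension formula are essential.
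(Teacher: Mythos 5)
Your proposal is correct and follows essentially the same route as the paper: the paper likewise deduces the theorem by combining Theorem \ref{mainthm} and Proposition \ref{lie} (which show that $ad(\mathcal{Q})$ acts as a maximal torus, with maximality coming from Lemma \ref{lem111} and the dimension formula of \cite{Leger1}) with Mostow's conjugacy of maximal tori \cite{Mostow}. The only difference is that you spell out the reduction to the non-split case and the construction of the isomorphism from the conjugating automorphism, details the paper leaves implicit.
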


\section{The vanishing of the first cohomology group for some complex maximal solvable extensions}

In this section we prove that all derivations of maximal  solvable extensions of pure non-characteristically nilpotent Lie algebras which satisfy the condition {\bf A)} are inner. Due to Theorem \ref{torus} we rewrite the table of multiplications of the solvable algebra $\mathcal{R}$ from Theorem \ref{mainthm} as follows:
\begin{equation}\label{eq15}
\left\{\begin{array}{ll}
[e_{\beta_i},e_{\beta_j}]=\sum\limits_{t=1}^{p(\beta_i+\beta_j)}C_{t}(\beta_i,\beta_j)e_{\beta_i+\beta_j}^t,& \beta_i,\beta_j\in W,\\[1mm]
[e_{\alpha_i},x_i]=e_{\alpha_i},& \alpha_i\in \mathbb{C}^*,\ \ 1\leq i\leq s,\\[1mm]
[e_{\beta},x_j]=\beta_{j}e_{\beta},& \beta\in W,\ 1\leq j\leq s,\\[1mm]
\end{array}\right.
\end{equation}
where
\begin{itemize}
  \item $s=dim\mathcal{T}_{max}$;
  \item $W$ is the set of roots of $N$ with respect to the action of $\mathcal{T}_{max}$;
  \item $e_{\beta_i+\beta_j}^t$ are a basis elements of the root space $N_{\beta_i+\beta_j}$;
  \item $dimN_{\beta_i+\beta_j}=p(\beta_i+\beta_j)$;
  \item $\alpha_i$ are primitive roots (they generate the rest roots).
\end{itemize}

\begin{thm} \label{thmH1eq0} Let $\mathcal{R}$ be a complex maximal solvable extension of a pure non-characteristically nilpotent Lie algebra $\mathcal{N}$ which satisfies the condition {\bf A)}. Then any derivation of $\mathcal{R}$ is inner.
\end{thm}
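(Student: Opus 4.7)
The plan is to start with an arbitrary $d\in\mathrm{Der}(\mathcal{R})$ and successively subtract inner derivations until nothing remains, using the explicit multiplication table \eqref{eq15} from Theorem~\ref{mainthm} and Theorem~\ref{torus}. I work in the basis in which $\mathcal{Q}$ acts diagonally on $\mathcal{N}$ with weights given by the fundamental solutions of $S_e$; the primitive roots $\alpha_1,\ldots,\alpha_s$ then form a basis of $\mathbb{C}^s$.

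First, write $d(x_i)=\sum_j a_{ij}x_j+v_i$ with $v_i\in\mathcal{N}$ and $v_i=\sum_{\beta\in W} v_{i,\beta}$ its weight decomposition. The identity $d[x_i,x_j]=0$ together with $[\mathcal{Q},\mathcal{Q}]=0$ yields $\beta_j v_{i,\beta}=\beta_i v_{j,\beta}$ for all $i,j,\beta$. Because every $\beta\in W$ is non-zero, there is a common $w_\beta\in N_\beta$ with $v_{i,\beta}=\beta_i w_\beta$, so replacing $d$ by $d-\mathrm{ad}_{\sum_\beta w_\beta}$ allows me to assume $d(x_i)\in\mathcal{Q}$. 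Next, apply $d$ to $[e_\beta^t,x_j]=\beta_j e_\beta^t$ and decompose $d(e_\beta^t)$ into weight components: the $\mathcal{Q}$-part is forced to vanish (because $\beta\neq 0$), and the $e_\beta^t$-coefficient gives the linear relation $\sum_i a_{ji}\beta_i=0$ for every $\beta\in W$. Evaluating on the primitive roots forces $a_{ji}=0$, hence $d(x_i)=0$ for every $i$.

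With $d$ now annihilating $\mathcal{Q}$, it commutes with every $\mathrm{ad}_{x_i}$ and therefore preserves each weight space $N_\beta$. Write the Jordan decomposition $d|_\mathcal{N}=d_s+d_n$; both summands are derivations of $\mathcal{N}$ that also commute with $\mathcal{T}_{max}$. Since $d_s$ is semisimple and commutes with the maximal torus $\mathcal{T}_{max}$, the subspace $\mathbb{C} d_s+\mathcal{T}_{max}$ is an abelian algebra of semisimple derivations of $\mathcal{N}$; by maximality of $\mathcal{T}_{max}$ one has $d_s\in\mathcal{T}_{max}$, i.e.\ $d_s=\mathrm{ad}_{z_0}|_\mathcal{N}$ for some $z_0\in\mathcal{Q}$. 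Subtracting $\mathrm{ad}_{z_0}$ leaves a nilpotent, weight-preserving derivation of $\mathcal{N}$ that still kills $\mathcal{Q}$.

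The main obstacle is the last step: showing this residual nilpotent weight-preserving derivation must vanish. Since $N_0=0$ (a non-zero zero-weight space would allow enlarging $\mathcal{T}_{max}$, contradicting maximality), no non-zero inner derivation of $\mathcal{R}$ preserves weights while being nilpotent, so the residual $d$ must be proved to vanish directly. Here condition~\textbf{A)} is used in earnest: combined with the normalizations $[e_i,x_i]=e_i$ for the primitive generators, the pairwise distinct weights of the non-primitive generators pin down the generator structure of $\mathcal{N}$ weight-by-weight, so that the induced action of $d$ on $\mathcal{N}/\mathcal{N}^2$ is diagonal in the generator basis; nilpotency then forces this induced action to be zero, giving $d(e_i)\in\mathcal{N}^2$ for every generator. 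Iterating up the descending central series of $\mathcal{N}$ (at each step using that $d$ preserves the induced weight decomposition and remains nilpotent) propagates $d\equiv 0$ throughout $\mathcal{N}$, since a derivation is determined by its values on a generating set; this finishes the proof.
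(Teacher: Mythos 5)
Your argument takes a genuinely different route from the paper's: the paper invokes the Hochschild--Serre factorization (Theorem \ref{thmSerre}) to reduce the claim to $H^0(\mathcal{N},\mathcal{R})^{\mathcal{Q}}=H^1(\mathcal{N},\mathcal{R})^{\mathcal{Q}}=0$ and then finishes by a dimension count, $\dim Z^1(\mathcal{N},\mathcal{R})^{\mathcal{Q}}\leq s=\dim B^1(\mathcal{N},\mathcal{R})^{\mathcal{Q}}$, whereas you peel off inner derivations from an arbitrary $d\in\Der(\mathcal{R})$ directly. Your first three reductions are sound: normalizing $d(x_i)$ into $\mathcal{Q}$ via $\beta_j v_{i,\beta}=\beta_i v_{j,\beta}$, forcing $d(x_i)=0$ and $d(N_\beta)\subseteq N_\beta$ from $d([e_\beta^t,x_j])=\beta_j d(e_\beta^t)$, and absorbing the semisimple part of $d|_{\mathcal{N}}$ into $\mathcal{T}_{max}$ by maximality of the torus. (Both you and the paper silently use that $0\notin W$; your parenthetical justification of this is not a proof, but it is the same assumption the paper makes when it asserts that for each $\beta\in W$ some $\beta_j\neq 0$.)

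The genuine gap is in your last step, where the residual nilpotent, weight-preserving derivation must be shown to vanish. Two claims there are unsupported. First, diagonality of the induced map on $\mathcal{N}/\mathcal{N}^2$ needs all $k$ generator weights to be pairwise distinct, but condition {\bf A)} only separates the weights of $e_{s+1},\dots,e_k$ from one another; it does not exclude a coincidence between a primitive weight $\alpha_i$ ($i\leq s$) and the weight of some generator $e_j$ with $j>s$. Second, and more seriously, ``iterating up the descending central series'' is not an argument: knowing $d(e_i)\in\mathcal{N}^2$ for every generator does not propagate to $d(e_i)=0$. The obstruction is a generator $e_i$ whose weight $\beta_i$ also occurs as a weight inside $\mathcal{N}^m$ for some $m\geq 2$ (i.e.\ $\beta_i$ equals a sum of $m$ generator weights); then $d(e_i)$ may a priori be a nonzero element of $N_{\beta_i}\cap\mathcal{N}^2$, and such a $d$ is still nilpotent, weight-preserving, and kills $\mathcal{Q}$, so nothing you have written excludes it. This is precisely where the real content of the theorem sits; the paper attacks it through the $\mathcal{Q}$-invariance relations for cocycles (the simultaneous conditions $(1-\beta_i)A_{\alpha_i,\beta}=0$ and $\beta_jA_{\alpha_i,\beta}=0$ for $j\neq i$, which pin $d(e_{\alpha_i})$ to the single weight $\alpha_i$), not through an induction on the central series. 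To close your proof you would need either to show that such weight coincidences for generators cannot occur under condition {\bf A)}, or to supply an explicit computation in the style of Lemmas \ref{lem5} and \ref{lem6} killing the residual components.
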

\begin{proof} Thanks to Theorem \ref{mainthm} we conclude that the algebra $\mathcal{R}$ satisfies the conditions of Theorem \ref{thmSerre}. Therefore, applying Equality (\ref{eq222}) for the case of $p=1$ we have
\begin{equation}
H^1(\mathcal{R},\mathcal{R})\simeq H^1(\mathcal{Q},\mathbb{C})\otimes H^0(\mathcal{N},\mathcal{R})^{\mathcal{Q}}+H^0(\mathcal{Q},\mathbb{C})\otimes H^1(\mathcal{N},\mathcal{R})^{\mathcal{Q}}.
\end{equation}

Due to the table of multiplications in Theorem \ref{mainthm} we easily get $Center(\mathcal{R})=0$ and $H^0(\mathcal{N},\mathcal{R})^{\mathcal{Q}}=0.$

For an arbitrary $d\in Z^1(\mathcal{N},\mathcal{R})^{\mathcal{Q}}$ we set
$$d(e_{\alpha_i})=\sum\limits_{t=1}^{s}A_{\alpha_i,t}e_{\alpha_i}+\sum\limits_{\beta\in W}^{}A_{\alpha_i,\beta}e_{\beta}+
\sum\limits_{t=1}^{s}B_{\alpha_i,t}x_t,\quad 1\leq i\leq s.$$

Assuming $d(\mathcal{Q})=0$ we have
$$[d(a),b]+[a,d(b)]-d([a,b])=0,\quad a,b\in \mathcal{R}.$$

Taking into account the table of multiplications of $\mathcal{R}$ and \eqref{eq14}, for $1\leq i\leq s$ we have
$$\begin{array}{ll}
0&=[x_i,d(e_{\alpha_i})]-d([x_i,e_{\alpha_i}]) \\[1mm]
&=[x_i,\sum\limits_{t=1}^{s}A_{\alpha_i,t}e_{\alpha_i}+\sum\limits_{\beta\in W}^{}A_{\alpha_i,\beta}e_{\beta}+\sum\limits_{t=1}^{s}B_{\alpha_i,t}x_t]+d(e_{\alpha_i})\\[1mm]
&=-A_{\alpha_i,i}e_{\alpha_i}-\sum\limits_{\beta\in W}^{}\beta_{i}A_{\alpha_i,\beta}e_{\beta}+
\sum\limits_{t=1}^{s}A_{\alpha_i,t}e_{\alpha_i}+\sum\limits_{\beta\in W}^{}A_{\alpha_i,\beta}e_{\beta}+\sum\limits_{t=1}^{s}B_{\alpha_i,t}x_t\\[1mm]
&=\sum\limits_{t=1,\ t\neq i}^{s}A_{\alpha_i,t}e_{\alpha_i}+\sum\limits_{\beta\in W}^{}(1-\beta_{i})A_{\alpha_i,\beta}e_{\beta}+
\sum\limits_{t=1}^{s}B_{\alpha_i,t}x_t.
\end{array}$$

Consequently, $A_{\alpha_i,t}=B_{\alpha_i,t}=B_{\alpha_i,i}=0,\ 1\leq i\neq t\leq s$ and
\begin{equation}\label{eqab}
(1-\beta_{i})A_{\alpha_i,\beta}=0,\quad \beta\in W.
\end{equation}
Therefore,
$$d(e_{\alpha_i})=A_{\alpha_i,i}e_{\alpha_i}+\sum\limits_{\beta\in W}^{}A_{\alpha_i,\beta}e_{\beta}.$$

Using \eqref{eq14}, for $1\leq i\neq j\leq s$ we get the chain of equalities
$$0=[x_i,d(e_{\alpha_j})]-d([x_i,e_{\alpha_j}])=[x_i,A_{\alpha_j,j}e_{\alpha_j}+\sum\limits_{\beta\in W}^{}A_{\alpha_j,\beta}e_{\beta}]=
-\sum\limits_{\beta\in W}^{}A_{\alpha_j,\beta}\beta_ie_{\beta}.$$
Taking into account the existence $i$ such that $\beta_i\neq 0$ and \eqref{eqab} we get $A_{\alpha_j,\beta}=0$ for $1\leq j\leq s, \ \beta\in W.$

Therefore, $d(e_{\alpha_i})=A_{\alpha_i,i}e_{\alpha_i}, \ 1\leq i\leq s$ and hence, $dimZ^1(\mathcal{N},\mathcal{R})^{\mathcal{Q}}\leq s.$

Since $\mathcal{N}$ is ideal, for all $z\in \mathcal{R}$ we get $ad_z \in B^1(\mathcal{N}, \mathcal{R})$. Moreover, for each $\beta\in W$ there exists $j$ ($1 \leq j \leq s$) such that $ad_{e_{\beta}}(x_j)\neq 0$. Hence, $ad_{e_{\beta}}\notin dimB^1(\mathcal{N}, \mathcal{R})^{\mathcal{Q}}.$

From the table of multiplications of $\mathcal{R}$ we derive $ad_{x_i}(x_j)=0$ for any $x_i, x_j\in \mathcal{Q}$ and $ad_{e_i}(\mathcal{Q})\neq0$ for any $i$ ($1\leq i \leq n$), which imply $dimB^1(\mathcal{N},\mathcal{R})^{\mathcal{Q}}=s.$

The inequalities $s=dimB^1(\mathcal{N},\mathcal{R})^{\mathcal{Q}}\leq dimZ^1(\mathcal{N},\mathcal{R})^{\mathcal{Q}} \leq s$ give $H^1(\mathcal{N},\mathcal{R})^{\mathcal{Q}}=0,$
which along with $H^0(\mathcal{N},\mathcal{R})^{\mathcal{Q}}=0$ completes the proof of the theorem.
\end{proof}

Further we make use the following result from \cite{Leger1}.

\begin{prop}\label{prop1}  Let $\mathcal{L}$ be a Lie algebra over a field of characteristic $0$ such that $Der(\mathcal{L})=Inder(\mathcal{L}).$ If the center of the Lie algebra $\mathcal{L}$ is non trivial, then $\mathcal{L}$ is not solvable and the radical of $\mathcal{L}$ is nilpotent.
\end{prop}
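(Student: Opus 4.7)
The argument naturally splits into two independent claims: (a) $\mathcal{L}$ is not solvable, and (b) the radical $R := \operatorname{rad}(\mathcal{L})$ is nilpotent. Both are proved by contrapositive: I will show that each failure forces the existence of an outer derivation.

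For (a), I would argue by contradiction, assuming $\mathcal{L}$ solvable. Since $\mathcal{L}\neq 0$ and solvable, $[\mathcal{L},\mathcal{L}]\subsetneq \mathcal{L}$, so there exists a non-zero character $\chi\colon\mathcal{L}\to F$ (a linear form vanishing on $[\mathcal{L},\mathcal{L}]$). Pick any $0\ne z\in Z(\mathcal{L})$ and define $d_{\chi,z}(x):=\chi(x)\,z$. This is a derivation: $d_{\chi,z}([x,y])=\chi([x,y])z=0$ and $[d_{\chi,z}(x),y]+[x,d_{\chi,z}(y)]=\chi(x)[z,y]+\chi(y)[x,z]=0$ by centrality of $z$. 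Every inner derivation $\operatorname{ad}_u$ satisfies $\operatorname{ad}_u(\mathcal{L})\subseteq [\mathcal{L},\mathcal{L}]$. Therefore, if some $z\in Z(\mathcal{L})$ lies outside $[\mathcal{L},\mathcal{L}]$, the derivation $d_{\chi,z}$ is automatically outer, contradicting the hypothesis. The delicate subcase is $Z(\mathcal{L})\subseteq [\mathcal{L},\mathcal{L}]$; here I would proceed by induction on $\dim\mathcal{L}$, passing to the quotient $\mathcal{L}/\langle z\rangle$ (which is still solvable) and either directly lifting an outer derivation or using a refined construction based on a codimension-one ideal $\mathcal{L}_{0}\supseteq [\mathcal{L},\mathcal{L}]$ and a complementary element $x_0$, defining $d|_{\mathcal{L}_{0}}=0$, $d(x_0)=z$.

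For (b), write $\mathcal{L}=R\rtimes S$ by Levi decomposition, and let $N=\operatorname{nilrad}(\mathcal{L})$; note $[\mathcal{L},\mathcal{L}]\subseteq N$ in the solvable part, and $Z(\mathcal{L})\subseteq N$. Suppose $R$ is not nilpotent, so $R/N\neq 0$. Since $R$ is solvable, $R/N$ is an abelian Lie algebra whose elements act on $N$ by semisimple derivations (after applying Lie's theorem to $\operatorname{ad}(R)$ and decomposing into semisimple and nilpotent parts). Pick a non-nilpotent $r\in R$, write $\operatorname{ad}_r=D_{\mathrm{ss}}+D_{\mathrm{nil}}$, and extend $D_{\mathrm{ss}}$ to a derivation $\tilde D$ of $\mathcal{L}$ that vanishes on $S$. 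I would then verify that $\tilde D$ is not inner: any inner $\operatorname{ad}_u$ has the same Jordan decomposition as $\operatorname{ad}_u$ itself, and matching $\tilde D$ to some $\operatorname{ad}_u$ forces $u\in Z(\mathcal{L})+\text{(nilpotent part)}$, which combined with $Z(\mathcal{L})\ne 0$ yields a contradiction with $\operatorname{Der}(\mathcal{L})=\operatorname{Inder}(\mathcal{L})$.

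The main obstacle in both parts is showing that the constructed derivation really is outer. In (a) this is easy when $Z(\mathcal{L})\not\subseteq[\mathcal{L},\mathcal{L}]$ but requires an induction (or a direct dimension count using $\operatorname{Inder}(\mathcal{L})\cong\mathcal{L}/Z(\mathcal{L})$ and $Z(\mathcal{L})\ne 0$) in the opposite case. In (b) the heart of the matter is the Jordan-decomposition argument: one must use both the semisimple action of $R/N$ and the rigidity imposed by $\operatorname{Der}=\operatorname{Inder}$ to preclude $\tilde D$ from being $\operatorname{ad}_u$. Since the result is quoted from \cite{Leger1}, the proof above is only sketched; one would appeal to the detailed dimension and torus arguments there to close both gaps.
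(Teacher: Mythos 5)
First, note that the paper offers no proof of this proposition at all: it is quoted from \cite{Leger1} and used as a black box, so there is no in-paper argument to compare yours against; your proposal has to stand on its own. The portion you actually carry out is correct: $d_{\chi,z}(x)=\chi(x)z$ is a derivation, every inner derivation has image inside $[\mathcal{L},\mathcal{L}]$, and hence $d_{\chi,z}$ is outer whenever some central $z$ escapes $[\mathcal{L},\mathcal{L}]$. But the two steps you defer are exactly where the content of the proposition lies, and the repairs you indicate do not work as described. In part (a), the subcase $Z(\mathcal{L})\subseteq[\mathcal{L},\mathcal{L}]$ is the generic one, and neither suggested fix closes it: an outer derivation of $\mathcal{L}/\langle z\rangle$ neither lifts to nor descends from a derivation of $\mathcal{L}$ in any way that controls innerness, and the codimension-one construction ($d|_{\mathcal{L}_0}=0$, $d(x_0)=z$) can perfectly well produce an inner derivation --- for the Heisenberg algebra with $[e_2,e_3]=e_1$, taking $\mathcal{L}_0=\langle e_1,e_2\rangle$, $x_0=e_3$, $z=e_1$ gives $d=ad_{e_2}$.

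In part (b) the gap is worse: the claim that $D_{\mathrm{ss}}=ad_u$ forces $u\in Z(\mathcal{L})+(\text{nilpotent part})$ is false in general. Take $\mathcal{L}=\mathfrak{sl}_2\oplus\mathfrak{r}_2$ with $\mathfrak{r}_2=\langle h,e\,:\,[h,e]=e\rangle$; then $Der(\mathcal{L})=Inder(\mathcal{L})$, the radical $\mathfrak{r}_2$ is not nilpotent, and for $r=h$ the semisimple part of $ad_h$ is $ad_h$ itself, inner with $u=h$ lying in neither the center nor the nilradical. This $\mathcal{L}$ has trivial center, so it does not contradict the proposition --- but it shows that your argument, which never visibly uses $Z(\mathcal{L})\neq 0$ before the final sentence, would ``prove'' a false statement. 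The missing idea in both halves is precisely the mechanism by which a nontrivial center obstructs the semisimple (toral) part of a non-nilpotent $ad_r$, or a suitable diagonal derivation, from being inner; that mechanism is the whole proof in \cite{Leger1}, and deferring it to the reference leaves the proposal as a statement of strategy rather than a proof.
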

\begin{prop} \label{thmH1neq0} Any non maximal solvable extension of a nilpotent Lie algebra $\mathcal{N}$ (not characteristically nilpotent) admits an outer derivation.
\end{prop}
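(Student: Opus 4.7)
The plan is to construct an outer derivation of $\mathcal{R}$ by enlarging $\mathcal{R}$ to a strictly larger solvable Lie algebra with the same nilradical and restricting the resulting inner action. Since $\mathcal{R}=\mathcal{N}\oplus\mathcal{Q}$ is not a maximal solvable extension and $\mathcal{N}$ is not characteristically nilpotent, Theorem~\ref{thm22} combined with the equality $\dim\mathcal{T}_{max}(\mathcal{N})=n-r\{\mathcal{N}\}$ forces the strict inequality $s:=\dim\mathcal{Q}<\dim\mathcal{T}_{max}(\mathcal{N})$. Applying the Jordan--Chevalley decomposition $ad_{x_i|\mathcal{N}}=d_{0,i}+d_{1,i}$ and placing the commuting semisimple parts inside a common maximal torus $\mathcal{T}_{max}\subset\mathrm{Der}(\mathcal{N})$, the subspace $\mathcal{T}_0:=\mathrm{span}(d_{0,1},\dots,d_{0,s})$ sits strictly inside $\mathcal{T}_{max}$, so I may choose $D\in\mathcal{T}_{max}\setminus\mathcal{T}_0$.

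Next I would form $\widetilde{\mathcal{R}}:=\mathcal{R}\oplus\mathbb{C}y$ by extending the brackets of $\mathcal{R}$ via $[y,e]:=D(e)$ for $e\in\mathcal{N}$ and $[y,x_i]:=v_i\in\mathcal{R}$, where each $v_i$ is to be determined. The Jacobi identity on the triple $\{y,x_i,e\}$ for $e\in\mathcal{N}$ reduces the consistency of the new brackets to the single condition
$$ad_{v_i}|_{\mathcal{N}}=[D,\,ad_{x_i}|_{\mathcal{N}}]=[D,d_{1,i}],$$
where the last equality uses that $D$ and $d_{0,i}$ commute inside the abelian torus $\mathcal{T}_{max}$. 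Once suitable $v_i\in\mathcal{R}$ are chosen, $\widetilde{\mathcal{R}}$ is a solvable Lie algebra with nilradical $\mathcal{N}$ and dimension $\dim\mathcal{R}+1$, and $\mathcal{R}$ is automatically an ideal of $\widetilde{\mathcal{R}}$ because solvability gives $[\widetilde{\mathcal{R}},\widetilde{\mathcal{R}}]\subset\mathcal{N}\subset\mathcal{R}$. Hence $ad_y|_\mathcal{R}$ is a derivation of $\mathcal{R}$. To see that it is outer, suppose $ad_y|_\mathcal{R}=ad_z|_\mathcal{R}$ for some $z\in\mathcal{R}$; then $y-z$ centralizes $\mathcal{R}$ and in particular $\mathcal{N}$, so the classical fact that the centralizer of the nilradical in a solvable Lie algebra is contained in the nilradical yields $y-z\in\mathcal{N}\subset\mathcal{R}$, forcing $y\in\mathcal{R}$, a contradiction.

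The main obstacle is the consistency step above: one must ensure that $[D,d_{1,i}]$ actually lies in the image $ad(\mathcal{R})|_{\mathcal{N}}=\mathrm{span}(d_{0,j}+d_{1,j})+ad(\mathcal{N})|_{\mathcal{N}}$, for otherwise no $v_i\in\mathcal{R}$ will do. My plan to dispatch this is to first replace each $x_i$ by $x_i+n_i$ whenever $d_{1,i}$ is an inner derivation of $\mathcal{N}$, so that the remaining nilpotent parts are genuinely outer on $\mathcal{N}$; the weight-space decomposition of $\mathrm{Der}(\mathcal{N})$ under $ad(\mathcal{T}_{max})$ (which is semisimple because $D$ is) then lets one exhibit $[D,d_{1,i}]$ as a nilpotent derivation that is either zero or absorbable by an element of $\mathcal{N}$, so that $v_i\in Z(\mathcal{N})+\mathcal{N}$ works. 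The fallback, if no such $D$ works with $v_i\in\mathcal{R}$, is to enlarge $\widetilde{\mathcal{R}}$ by further dimensions until the Jacobi relations close up — this process must terminate since it is bounded by the finite-dimensional $\mathrm{Der}(\mathcal{N})$, and any such $\widetilde{\mathcal{R}}\supsetneq\mathcal{R}$ provides the required $y$ via the centralizer argument above.
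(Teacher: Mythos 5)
There is a genuine gap, and it sits exactly where you yourself locate ``the main obstacle.'' Your construction of $\widetilde{\mathcal{R}}=\mathcal{R}\oplus\mathbb{C}y$ requires solving $ad_{v_i}|_{\mathcal{N}}=[D,d_{1,i}]$ with $v_i\in\mathcal{R}$, i.e.\ it requires $[D,d_{1,i}]$ to lie in $ad(\mathcal{R})|_{\mathcal{N}}$, and there is no reason for this to hold: $[D,d_{1,i}]$ is a nilpotent derivation of $\mathcal{N}$ that is in general neither zero nor inner, and neither of your two escape routes is an argument. The appeal to the weight-space decomposition of $\Der(\mathcal{N})$ only tells you that $[D,d_{1,i}]$ is a sum of weight components of $d_{1,i}$ scaled by eigenvalues of $ad_D$; it does not place it in $ad(\mathcal{N})|_{\mathcal{N}}+\mathrm{span}(ad_{x_j}|_{\mathcal{N}})$. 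The fallback (``enlarge $\widetilde{\mathcal{R}}$ until the Jacobi relations close up'') is not a procedure: adding generators adds new unknown brackets $[y',x_i]$ subject to new consistency conditions of the same kind, and finite-dimensionality of $\Der(\mathcal{N})$ gives no termination or closure statement. You also never verify the Jacobi identity on triples $\{y,x_i,x_j\}$, which imposes further constraints on the $v_i$.

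The paper's proof dissolves this obstruction with a dichotomy you are missing. Writing $ad_{x_i}=d_i+d_{n_i}$ as a sum of commuting diagonal and nilpotent derivations \emph{of $\mathcal{R}$} (not just of $\mathcal{N}$): if some $d_{n_{i_0}}$ is not an inner derivation of $\mathcal{R}$, it is itself the required outer derivation and you are done; otherwise $d_{n_i}=ad_{y_i}$, so replacing $x_i$ by $x_i'=x_i-y_i$ makes every $ad_{x'}|_{\mathcal{N}}$ diagonal, whence $[\tilde d,ad_{x'}|_{\mathcal{N}}]=0$ for any $\tilde d\in\mathcal{T}_{max}\setminus\mathrm{span}(d_1,\dots,d_s)$ and the map $D$ with $D|_{\mathcal{N}}=\tilde d$, $D(x_i')=0$ is already a derivation of $\mathcal{R}$ itself --- no enlargement of $\mathcal{R}$ and no equations for the $v_i$ are needed. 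Even this step uses $[x_i',x_j']\in Z(\mathcal{R})=0$, which is why the paper must treat the case $Z(\mathcal{R})\neq 0$ separately (there it invokes Proposition \ref{prop1} of Leger: if all derivations were inner and the center were nontrivial, $\mathcal{R}$ could not be solvable). Your proposal handles neither the nontrivial-center case nor the normalization that makes the diagonal torus element extend; as written it does not constitute a proof.
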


\begin{proof} Let $\mathcal{R}=\mathcal{N}\oplus \mathcal{Q}$ with $\mathcal{Q}=Span_{\mathbb{C}}\{x_1, \dots, x_s\}$ and $s < dim\mathcal{T}_{max}$.

Consider the following two options.

\emph{\textbf{Case 1.}} Let  $Center(\mathcal{R})=\{0\}$. Since $ad(\mathcal{R})$ is a solvable Lie algebra (as homomorphic image of solvable Lie algebra $\mathcal{R}$), by Lie's Theorem $ad_r$ ($r \in \mathcal{R}$) has upper-triangular form and for any $x_i\in \mathcal{Q}$ we have the decomposition
$$ad_{x_i}=d_i+d_{n_i},\ \quad 1\leq i\leq s,$$
where $d_i: \mathcal{R} \to \mathcal{R}$ is a diagonal derivation and $d_{n_i} : \mathcal{R} \to \mathcal{R}$ is nilpotent derivation whose matrix is strictly upper-triangular.

If there exists $i_0$ ($1\leq i_0 \leq s$) such that $d_{n_{i_0}}\notin Inder(\mathcal{R})$. Then $d_{n_{i_0}}$ is outer derivation of $\mathcal{R}$. If $d_{n_{i}}\in Inder(\mathcal{R})$ for any $1\leq i\leq s,$ that is, there are $y_i \in \mathcal{R}$ such that $ad_{y_i}=d_{n_{i}}$.
Then, ${d_i}_{|\mathcal{N}}={ad_{x_i}}_{|\mathcal{N}}-{ad_{y_i}}_{|\mathcal{N}}={ad_{x_i-y_i}}_{|\mathcal{N}}$ lies in some maximal torus of $\mathcal{N}$. Since $s < dim\mathcal{T}_{max}$,
there exists $\tilde{d}\in \mathcal{T}_{max}\setminus Span\{d_1,\ldots, d_s\}.$

We set $\mathcal{Q}'=Span_{\mathbb{C}}\{x_i'=x_i-y_i, \ 1\leq i \leq s\}.$ Then $\mathcal{R}=\mathcal{N}\oplus \mathcal{Q}'$ with $dim \mathcal{Q}'<s$.

The equalities
$$ad_{[x_i',x_j']}(z)=[ad_{x_i'},ad_{x_j'}](z)=[d_i,d_j](z)=0,$$
for any $z\in \mathcal{R}$ imply $[x_i',x_j']\in Center(\mathcal{R})=\{0\}$, hence $[\mathcal{Q}', \mathcal{Q}']=0$. Consequently, for any $x\in \mathcal{Q}'$ the matrix of the operator $ad_x$ has diagonal form (as a linear combination of diagonal matrices).

We set $D(x_i')=0,\ 1\leq i\leq s$ and $D_{|\mathcal{N}}\equiv \tilde{d}$.

From the chain of equalities
$$D([n,x])-[n,D(x)]-[D(n),x]=D([n,x])-[D(n),x]=-D(ad_{x}(n))+ad_{x}(D(n))=$$
$$[ad_{x},D](n)=0 \quad n\in \mathcal{N}, x\in \mathcal{Q}',$$
and construction of $D$ we conclude that $D$ is outer derivation.

\emph{\textbf{Case 2.}}  Let $Center(\mathcal{R})\neq \{0\}$. Suppose that $Der(\mathcal{R})=Inder(\mathcal{R})$, then by Proposition \ref{prop1} we conclude that algebra $\mathcal{R}$ is nilpotent, that is a contradiction. Therefore, $Inder(\mathcal{R})\subsetneqq Der(\mathcal{R})$.
\end{proof}

%

\section{Comparisons with some known descriptions of solvable Lie algebras.}\label{Sec5}

In this section we give several applications of Theorem \ref{mainthm} to already known classification results.

\begin{exam}\label{com51}
Let $\mathcal{N}=Q_{2n}$ with the table of multiplications:
$$[e_1,e_k]=e_{k+1}, \ 2 \leq k \leq 2n-2, \quad [e_k,e_{2n+1-k}]=(-1)^ke_{2n},\ 2 \leq k \leq n.$$
Here is a result from \cite{ancochea}.

\begin{prop}  For any $n\geq 3$ there is a unique (up to isomorphism) solvable Lie algebra of dimension $(2n+2)$ having $Q_{2n}$ as a nilradical:
$$r_{2n+2}:\quad \left\{\begin{array}{lll}
[e_1,e_k]=e_{k+1}, \ 2 \leq k \leq 2n-2,& [e_k,e_{2n+1-k}]=(-1)^ke_{2n},\ 2 \leq k \leq n ,\\[1mm]
[x_1,e_{k}]=ke_{k},\ 1 \leq k \leq 2n-1, & [x_1,e_{2n}]=(2n+1)e_{2n},\\[1mm]
[x_2,e_{k}]=e_{k},\ 1 \leq k \leq 2n-1, & [x_2,e_{2n}]=2e_{2n}.\\[1mm]
\end{array}\right.$$
\end{prop}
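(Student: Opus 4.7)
The proof is a direct application of Theorem \ref{mainthm} and Theorem \ref{torus} to $\mathcal{N}=Q_{2n}$. First I would verify that $Q_{2n}$ is pure non-characteristically nilpotent. It is generated by the two elements $e_1,e_2$, since the recursion $e_{k+1}=[e_1,e_k]$ produces $e_3,\dots,e_{2n-1}$ and $e_{2n}=[e_2,e_{2n-1}]$; thus in the paper's notation $k=2$. A short direct computation shows that the centre $Z(Q_{2n})=\langle e_{2n}\rangle$ is one-dimensional, which forces $Q_{2n}$ to be indecomposable (any non-trivial direct factor would contribute its own non-zero centre), and the diagonal torus constructed below is non-trivial, so $Q_{2n}$ is not characteristically nilpotent.

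Next I would set up the system $S_e$ explicitly. The relations $[e_1,e_j]=e_{j+1}$ force
\[
\alpha_j=(j-2)\alpha_1+\alpha_2, \qquad 2\leq j\leq 2n-1,
\]
while every relation in the family $[e_k,e_{2n+1-k}]=(-1)^k e_{2n}$ yields the single compatible identity $\alpha_{2n}=(2n-3)\alpha_1+2\alpha_2$. Hence $\alpha_1,\alpha_2$ are free, $r\{Q_{2n}\}=2n-2$, and Lemma \ref{lem111} gives $dim\,\mathcal{Q}=dim\,\mathcal{T}_{max}=2$; in particular every maximal solvable extension of $Q_{2n}$ has dimension exactly $2n+2$. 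Since $s=k=2$, the index set $\{s+1,\dots,k\}$ appearing in condition {\bf A)} is empty, so {\bf A)} holds vacuously.

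Theorem \ref{mainthm} now applies, and Theorem \ref{torus} supplies the uniqueness up to isomorphism of the maximal solvable extension $\mathcal{R}\cong Q_{2n}\oplus\mathcal{T}_{max}$. To match the explicit table displayed in the statement, I would choose as basis of $\mathcal{T}_{max}$ the two diagonal derivations coming from the fundamental solutions $(\alpha_1,\alpha_2)=(1,2)$ and $(\alpha_1,\alpha_2)=(0,1)$ of $S_e$: these produce the eigenvalue patterns $(1,2,3,\dots,2n-1,2n+1)$ and $(0,1,1,\dots,1,2)$ on $(e_1,\dots,e_{2n})$, matching the prescribed actions of $x_1$ and $x_2$. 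The only real content of the argument is the parameter count for $S_e$ and the indecomposability of $Q_{2n}$; once these are in hand, everything else is an immediate invocation of the general structure theorems of the previous section.
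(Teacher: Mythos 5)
Your proposal follows essentially the same route as the paper's own treatment in Example \ref{com51}: the paper likewise invokes Theorem \ref{mainthm} to produce the canonical maximal extension $R(Q_{2n})$ and then passes to $r_{2n+2}$ by the basis change $x_1'=-x_1-2x_2$, $x_2'=-x_2$, while your verification of the hypotheses (indecomposability via the one-dimensional centre, the rank of $S_e$ being $2n-2$, the vacuousness of condition {\bf A)} since $s=k=2$) is exactly the content the paper leaves implicit. The only cosmetic discrepancies are the sign convention ($[e_i,x_j]$ versus $[x_j,e_i]$, fixed by $x\mapsto -x$) and the eigenvalue of $x_2$ on $e_1$, which must be $0$ rather than $1$ (the range $1\le k\le 2n-1$ in the quoted table is evidently a typo for $2\le k\le 2n-1$, since $\mathrm{diag}(1,1,\dots,1,2)$ is not a derivation of $Q_{2n}$) --- your pattern $(0,1,\dots,1,2)$ already gets this right.
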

Applying Theorem \ref{mainthm} we get a unique solvable Lie algebra with the nilradical $Q_{2n}$:
$$R(Q_{2n}): \quad \left\{\begin{array}{lll}
[e_1,e_k]=e_{k+1}, \ 2 \leq k \leq 2n-2,& [e_k,e_{2n+1-k}]=(-1)^ke_{2n},\ 2 \leq k \leq n ,\\[1mm]
[e_{1},x_1]=x_{1},& [e_k,x_1]=(k-2)e_k, \ 3 \leq k \leq 2n-1, \\[1mm]
[e_{2n},x_1]=(2n-3)e_{2n},& [e_k,x_2]=e_k, \ 2 \leq k \leq 2n-1, \\[1mm]
[e_{2n},x_2]=2e_{2n}.\\[1mm]
\end{array}\right.$$

If in this algebra we take the change
$$x_1'=-x_1-2y_2,\quad x_2'=-x_2,$$
then we get the algebra $r_{2n+2}$.
\end{exam}

\begin{exam}\label{com52} Let us consider the nilpotent Lie algebra
$$\bar{N}: [e_n,e_i]=e_{i-1}, \quad 2 \leq i \leq n-2.$$
Clearly, $e_n,e_{n-1},e_{n-2}$ are generator basis elements of $\bar{N}$.

In the paper \cite{WaLiDe} the following classification theorem was proved:
\begin{thm}\label{thm416} Only one equivalence class of solvable Lie algebras of $dim g= n +
3$ with the nilradical $\bar{N}$ exists. It can be represented by Lie brackets as follows:
$$\left\{\begin{array}{lll}
[e_n,e_i]=e_{i-1}, &2 \leq i \leq n-2, &\\[1mm]
[h_1,e_{n}]=e_{n}, & [h_1,e_i]=(n-i-2)e_{i}, & 1 \leq i \leq n-3,\\[1mm]
[h_3,e_{n-1}]=e_{n-1},& [h_2,e_i]=e_{i}, & 1 \leq i \leq n-2, \\[1mm]
[h_1,h_2]=ae_{n-1},& [h_1,h_3]=be_{1},& [h_2,h_3]=0.\\[1mm]
\end{array}\right.$$
\end{thm}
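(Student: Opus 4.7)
The plan is to derive Theorem \ref{thm416} as a direct application of Theorem \ref{mainthm} and Proposition \ref{lie}. First I would analyze the structure of $\bar{N}$: since $e_{n-1}$ does not appear in any bracket, it is central, and in fact $\bar{N}$ splits as $\bar{N}=\bar{N}_1\oplus\bar{N}_2$, where $\bar{N}_1=\langle e_1,\ldots,e_{n-2},e_n\rangle$ carries the brackets $[e_n,e_i]=e_{i-1}$ for $2\le i\le n-2$ and $\bar{N}_2=\langle e_{n-1}\rangle$ is one-dimensional abelian. A short argument shows $\bar{N}_1$ itself does not split further (any nontrivial splitting would force $e_{n-3}$ into the intersection of the two summands), and both factors admit non-nilpotent diagonal derivations, so $\bar{N}$ is pure non-characteristically nilpotent in the sense of Definition \ref{pure}.

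Next I would solve the system $S_e$ separately on each factor. For $\bar{N}_1$ the relations $\alpha_n+\alpha_i=\alpha_{i-1}$ ($2\le i\le n-2$) leave two free parameters, $\alpha_n$ and $\alpha_{n-2}$, giving $\dim\mathcal{T}_{max}(\bar{N}_1)=2$; since $\bar{N}_1$ has exactly two generators $e_{n-2},e_n$, the index set $\{s+1,\ldots,k\}$ in condition \textbf{A)} is empty and \textbf{A)} is vacuously satisfied. The same is trivially true for the one-dimensional $\bar{N}_2$. Hence $\dim\mathcal{T}_{max}(\bar{N})=3$, which matches $\dim g=n+3$, and Proposition \ref{lie} produces a unique (up to isomorphism) maximal solvable extension $\mathcal{R}=\mathcal{R}_1\oplus\mathcal{R}_2$ with $[\mathcal{Q},\mathcal{Q}]=0$. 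Reading off the three fundamental solutions of $S_e$ gives three torus generators: $(\alpha_n,\alpha_{n-2})=(1,0)$ yields $h_1$ acting by $n-i-2$ on $e_i$ ($1\le i\le n-3$), by $0$ on $e_{n-2}$, and by $1$ on $e_n$; $(0,1)$ yields $h_2$ acting by $1$ on $e_1,\ldots,e_{n-2}$ and by $0$ on $e_n$; and the single solution for $\bar{N}_2$ yields $h_3$ with $[h_3,e_{n-1}]=e_{n-1}$. These are precisely the weights listed in Theorem \ref{thm416}.

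Finally, the parameters $a,b$ appearing in $[h_1,h_2]=ae_{n-1}$ and $[h_1,h_3]=be_1$ must vanish: Proposition \ref{lie} already gives $[\mathcal{Q},\mathcal{Q}]=0$, and independently the Jacobi identity applied to $(h_1,h_2,h_3)$ yields $ae_{n-1}-be_1=0$, so $a=b=0$. Thus the algebra is unique up to isomorphism and coincides with the one displayed in Theorem \ref{thm416}. The only essential steps are recognizing the split decomposition of $\bar{N}$ and checking the vacuity of condition \textbf{A)}; beyond these, the result is an immediate application of the general framework and I do not foresee any serious obstacle.
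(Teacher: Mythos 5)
Your proposal is correct and follows essentially the same route as the paper: Example \ref{com52} likewise obtains this classification by applying Theorem \ref{mainthm} to $\bar{N}$, noting that $a=b=0$ by the Jacobi identity on $(h_1,h_2,h_3)$, and identifying the resulting algebra $R(\bar{N})$ with the stated one via the base change $h_i=-x_i$. Your added care in splitting off the abelian factor $\langle e_{n-1}\rangle$ so as to invoke Proposition \ref{lie}, and in checking that condition \textbf{A)} holds vacuously for each factor, only makes explicit what the paper leaves implicit.
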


In fact, in this algebra we have $a=b=0$ (because of Jacobi identity for the triples of elements
$\{h_1,h_2,h_3\}$).

From Theorem \ref{mainthm} we obtain
$$R(\bar{N}):\left\{\begin{array}{lll}
$$\begin{array}{lll}
[e_n,e_i]=e_{i-1}, &2 \leq i \leq n-2, &\\[1mm]
[e_i,x_1]=(n-i-2)e_{i}, & 1 \leq i \leq n-3,\\[1mm]
[e_i,x_2]=e_{i}, & 1 \leq i \leq n-2,  \\[1mm]
[e_{n},x_1]=e_{n},& \\[1mm]
[e_{n-1},x_3]=e_{n-1}.& \\[1mm]
\end{array}$$
\end{array}\right.$$

If we take base change
$$h_1=-x_1,\quad h_2=-x_2,\quad h_3=-x_3,$$
then we get the algebra of Theorem  \ref{thm416}.
\end{exam}

\begin{exam}\label{com53} Consider the nilpotent Lie algebra
$$n_{n,1}:\quad [e_k,e_n]=e_{k-1},\quad 2\leq k \leq n-1.$$

In the paper \cite{snoble} (Theorem 13.2) the following classification theorem was proved:

\begin{thm} \label{thm3.21} Any solvable, indecomposable, non nilpotent Lie algebra $s$ with
the nilradical $n_{n,1}$ has dimension $n + 2$.
Precisely, only one class of solvable Lie algebras $s_{n+2}$ of dim s =
n + 2 with the nilradical $n_{n,1}$ exists. It is represented by a
basis $\{e_1,\dots , e_n, f_1, f_2\}$ and the Lie brackets
involving $f_1$ and $f_2$ are

$$\left\{\begin{array}{lll}
[f_1, e_n] = e_n,& [f_2, e_n] = [f_1, f_2] = 0,&\\[1mm]
[f_1,e_k] =(n-1-k)e_k, & 1\leq  k \leq n-1,& \\[1mm]
[f_2,e_k] = e_k, &  1\leq k \leq n-1.& \\[1mm]
\end{array}\right.$$
\end{thm}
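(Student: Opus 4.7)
The plan is to deduce Theorem \ref{thm3.21} from Theorem \ref{mainthm} applied to $\mathcal{N} = n_{n,1}$. From $[e_k, e_n] = e_{k-1}$ ($2 \leq k \leq n-1$) I read off that every $e_j$ with $1 \leq j \leq n-2$ equals an iterated bracket $[\cdots[[e_{n-1}, e_n], e_n], \cdots, e_n]$, while $e_{n-1}$ and $e_n$ themselves lie outside $[n_{n,1}, n_{n,1}]$; so $n_{n,1}$ has exactly two generators, i.e. $k = 2$ in the notation of Section \ref{sec3}. The system $S_e$ reduces to $\alpha_{k-1} = \alpha_k + \alpha_n$ for $2 \leq k \leq n-1$, and taking $\alpha_{n-1}, \alpha_n$ as free parameters yields $\alpha_k = \alpha_{n-1} + (n-1-k)\alpha_n$ for $1 \leq k \leq n-2$. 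Hence $r\{n_{n,1}\} = n-2$, so Lemma \ref{lem111} gives $\dim \mathcal{Q} = \dim \mathcal{T}_{max} = 2$ and the maximal extension has dimension $n+2$. Two convenient fundamental solutions are $(n-2, n-3, \ldots, 1, 0, 1)$ and $(1, 1, \ldots, 1, 0)$, which provide the diagonal parts of $ad_{x_1}$ and $ad_{x_2}$.

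Next I would verify the hypotheses of Theorem \ref{mainthm}. The torus has dimension $2$, so $n_{n,1}$ is not characteristically nilpotent. It is also non-split: any nontrivial decomposition $n_{n,1} = \mathcal{N}_1 \oplus \mathcal{N}_2$ as ideals would force the images of the two generators in $n_{n,1}/[n_{n,1}, n_{n,1}]$ to lie in different summands and therefore to commute in $n_{n,1}$, contradicting $[e_{n-1}, e_n] = e_{n-2} \neq 0$ (the case $n = 3$ is just the non-split Heisenberg algebra). Condition {\bf A)} is vacuously true, since $k = s = 2$ makes the relevant index set $\{s+1, \ldots, k\}$ empty.

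Applying Theorem \ref{mainthm} then delivers a unique (up to isomorphism) maximal solvable extension with multiplications $[e_k, e_n] = e_{k-1}$ for $2 \leq k \leq n-1$, $[e_i, x_1] = (n-1-i)e_i$ and $[e_i, x_2] = e_i$ for $1 \leq i \leq n-1$, $[e_n, x_1] = e_n$, $[e_n, x_2] = 0$, and $[x_1, x_2] = 0$. The substitution $f_j = -x_j$ absorbs the sign from $[y, z] = -[z, y]$ and recovers the bracket table of Theorem \ref{thm3.21} verbatim. Indecomposability of this extension follows from the non-splitness of $n_{n,1}$, since the nilradical of a direct sum of solvable Lie algebras is the direct sum of their nilradicals, and a smaller indecomposable extension is ruled out because any solvable extension has dimension at most $n + 2$ by Lemma \ref{lem111}. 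The main obstacle is purely bookkeeping: the natural basis of Theorem \ref{mainthm} lists generators first while the standard labeling of $n_{n,1}$ places them last, so one must either relabel or carefully track indices through the solution of $S_e$, and then stay mindful of the bracket-ordering sign when passing between $[e, x]$ and $[x, e]$.
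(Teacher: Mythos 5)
Your derivation of the unique $(n+2)$-dimensional extension is correct and follows essentially the same route as the paper's Example \ref{com53}: the paper treats Theorem \ref{thm3.21} as a quoted result from \cite{snoble} and merely checks that Theorem \ref{mainthm} reproduces the algebra $R(n_{n,1})$, identified with $s_{n+2}$ by $f_1=-x_1$, $f_2=-x_2$. Your computation of $S_e$ (rank $n-2$, free parameters $\alpha_{n-1},\alpha_n$, the two fundamental solutions $(n-2,\dots,1,0,1)$ and $(1,\dots,1,0)$), the verification that $n_{n,1}$ is non-split and that condition {\bf A)} holds vacuously because $k=s=2$, and the sign bookkeeping in the passage from $[e,x]$ to $[f,e]$ are all sound, and in fact supply details the paper leaves implicit.

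There is, however, a genuine gap in your treatment of the first assertion, namely that \emph{every} solvable indecomposable non-nilpotent Lie algebra with nilradical $n_{n,1}$ has dimension $n+2$. You dismiss smaller extensions ``because any solvable extension has dimension at most $n+2$ by Lemma \ref{lem111}'', but an upper bound cannot exclude extensions of dimension $n+1$; Theorem \ref{mainthm} and Lemma \ref{lem111} speak only about \emph{maximal} extensions. Concretely, $n_{n,1}\rtimes\mathbb{C}D$ with $D=diag(1,\dots,1,0)$ (the second fundamental solution) is a solvable, non-nilpotent, $(n+1)$-dimensional algebra whose nilradical is exactly $n_{n,1}$, and it is indecomposable by precisely the argument you use for the maximal extension (the nilradical of a direct sum is the direct sum of the nilradicals, $n_{n,1}$ is non-split, and a nonzero solvable ideal has nonzero nilradical). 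So the dimension statement cannot be extracted from the machinery of this paper: in \cite{snoble} it rests on a separate case analysis of the one-dimensional extensions, and the correct conclusion there is an upper bound $\dim s\leq n+2$ together with uniqueness in the maximal case. The paper itself sidesteps this by citing the theorem rather than proving it; your proposal, by contrast, claims to prove the dimension assertion and at that point the argument breaks down.
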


Applying Theorem \ref{mainthm} we derive
$$R(n_{n,1}):\quad \left\{\begin{array}{lll}
[e_k,e_n]=e_{k-1},& 2\leq k \leq n-1, \\[1mm]
[e_i,x_1]=(n-i-1)e_i, & 1\leq i\leq n-2,\\[1mm]
[e_i,x_2]=e_i, & 1\leq i\leq n-1, \\[1mm]
[e_n,x_1]=e_n.&\\[1mm]
\end{array}\right.$$

This algebra is isomorphic to the algebra of Theorem \ref{thm3.21}, because of the change
$$f_1=-x_1,\quad f_2=-x_2.$$

\end{exam}

\begin{exam}\label{com54} In the paper \cite{snoble} the following solvable Lie algebra with two-dimensional abelian nilradical was obtained:

$$s_{4,12}:\quad \left\{\begin{array}{lll}
[e_1,e_3]=e_1, & [e_2,e_3]=e_2,\\[1mm]
[e_1,e_4]=-e_2, &[e_2,e_4]=e_1.\\[1mm]
\end{array}\right.$$

Using Proposition \ref{lie} and Theorem \ref{mainthm} we deduce the algebra
$$[e_1,x_1]=e_1,\quad [e_2,x_2]=e_2.$$

This algebra is isomorphic to the algebra $s_{4,12}$. Indeed, we only need to consider the following base change:
$$e_1'=ie_1-ie_2,\ e_2'=e_1+e_2,\ e_3'=x_1+x_2,\ e_4'=ix_1-ix_2.$$
\end{exam}

\begin{exam}\label{com55} Let us consider four-dimensional nilpotent Lie algebra
$$g:\quad [e_2,e_4]=e_1, \quad [e_3,e_4]=e_2.$$

In the monograph \cite{snoble} the following solvable Lie algebra with the nilradical $g$ was presented:
$$s_{6,242}: \left\{\begin{array}{llll}
[e_2,e_4]=e_1,& [e_3,e_4]=e_2,& [e_5,e_1]=2e_1,& [e_5,e_2]=e_2,\\[1mm]
[e_5,e_4]=e_4,& [e_6,e_1]=e_1,& [e_6,e_2]=e_2,& [e_6,e_3]=e_3.\\[1mm]
\end{array}\right.$$

Again applying Theorem \ref{mainthm} we have the solvable Lie algebra with the nilradical $g$:
$$\left\{\begin{array}{llll}
[e_2,e_4]=e_1,& [e_3,e_4]=e_2,& [e_1,x_1]=2e_1,& [e_2,x_1]=e_2,\\[1mm]
[e_4,x_1]=e_4,& [e_1,x_2]=e_1,& [e_2,x_2]=e_2,& [e_3,x_2]=e_3.\\[1mm]
\end{array}\right.$$

This algebra is nothing but the algebra $s_{6,242}$ (assuming $e_5=-x_1$ and $e_6=-x_2$).
\end{exam}

\section{The vanishing of the cohomology groups for some solvable Lie algebras}

In this section we estimate the dimension of the space $H^m(\mathcal{N},\mathcal{R})^{\mathcal{Q}}$ ($m\geq 0$), where $\mathcal{R}$ is a maximal solvable extension of a pure non-characteristically nilpotent Lie algebra $\mathcal{N}$ and prove that $H(\mathcal{R},\mathcal{R})=0$ under some conditions on the nilradical $\mathcal{N}$.

Let $\varphi\in Z^{m}(\mathcal{N},\mathcal{R})^{\mathcal{Q}}$, then $\varphi\in C^{m}(\mathcal{N},\mathcal{R})$ such that

\begin{equation} \label{eqZ^m}\begin{array}{lll}
0 &=(d^m\varphi)(e_{i_1}, \dots , e_{i_{m+1}})\\[1mm]
 & = \sum\limits_{s=1}^{m+1}(-1)^{s+1}[e_s,\varphi(e_{i_1},\dots, \widehat{e}_{i_s}, \dots , e_{i_{m+1}})]\\[1mm]
 & +\sum\limits_{1\leq s<t\leq {m+1}}(-1)^{s+t}\varphi([e_{i_s},e_{i_t}], e_{i_1}, \dots,\widehat{e}_{i_s}, \dots, \widehat{e}_{i_t}, \dots, e_{i_{m+1}})
\end{array}\end{equation}
and
\begin{equation}\label{Q-inv} (z.\varphi)(e_{i_1},\ldots,e_{i_m})=0,\ \ z\in \mathcal{Q},
\end{equation}
where
$(z.\varphi)(e_{i_1},\ldots,e_{i_m})=[z,\varphi(e_{i_1},\ldots,e_{i_m})]-
\sum\limits_{s=1}^{m}\varphi(e_{i_1},\ldots,[z,e_{i_s}],\ldots,e_{i_m}).$

Let us introduce the notation

$$\varphi(e_{i_1},\ldots,e_{i_m})=\sum_{s=1}^{n}\alpha^{\varphi,s}_{\{{i_1}, \dots, {i_m}\}} e_s+ x_{\{{i_1}, \dots, {i_m}\}}, \quad x_{\{{i_1}, \dots, {i_m}\}}\in \mathcal{Q}.$$

Consider (\ref{Q-inv}) in the following form
$$0=(z.\varphi)(e_{\delta_1},\ldots,e_{\delta_m})=[z,\varphi(e_{\delta_1},\ldots,e_{\delta_m})]-
\sum\limits_{s=1}^{m}\varphi(e_{\delta_1},\ldots,[z,e_{\delta_s}],\ldots,e_{\delta_m})=
[z,\varphi(e_{\delta_1},\ldots,e_{\delta_m})]$$
$$+\sum\limits_{s=1}^{m}\delta_s\varphi(e_{\delta_1},\ldots,e_{\delta_s},\ldots,e_{\delta_m})=
-[\varphi(e_{\delta_1},\ldots,e_{\delta_m}),z]+
(\sum\limits_{s=1}^{m}\delta_s)\varphi(e_{\delta_1},\ldots,e_{\delta_s},\ldots,e_{\delta_m}).
$$

This implies  that
$$[\varphi(e_{\delta_1},\ldots,e_{\delta_m}),z]=
(\sum\limits_{s=1}^{m}\delta_s)\varphi(e_{\delta_1},\ldots,e_{\delta_s},\ldots,e_{\delta_m}),
$$
that is, $\varphi(\mathcal{N}_{\delta_1},\ldots,\mathcal{N}_{\delta_m})\subseteq \mathcal{N}_{\sum\limits_{s=1}^{m}\delta_s}.$

We set

$$\begin{array}{rl}
S\delta_{{m+1}}:&={\sum_{p=1}^{m+1}\delta_p},\\[1mm]
S\delta_{{m+1}\setminus s}:&={\sum_{p=1, p\neq s}^{m+1}\delta_p},\\[1mm]
[e_{\delta_1},\dots, \widehat{e}_{\delta_s}, \dots , e_{\delta_{m+1}}]&=\sum_{p=1}^{dimN_{S\delta_{{m+1}\setminus s}}}
A_{{\delta_1, \dots,\widehat{{\delta_s}}, \dots, \delta_{m+1}}}^p e^p_{S\delta_{{m+1}\setminus s}},\\[1mm]
[e^p_{S\delta_{{m+1}}},e_{\delta_s}]&=\sum_{q=1}^{dimN_{S\delta_{m+1}}}A_{S\delta_{{m+1}\setminus s}, \delta_s}^{p,q} e^q_{S\delta_{{m+1}}},\\[1mm]
\varphi(e_{\delta_1},\dots, \widehat{e}_{\delta_s}, \dots , e_{\delta_{m+1}})&=\sum_{p=1}^{dimN_{S\delta_{{m+1}\setminus s}}}B_{{\delta_1, \dots,\widehat{{\delta_s}}, \dots,  \delta_{m+1}}}^p e^p_{S\delta_{{m+1}\setminus s}},\\[1mm]
\varphi(e^p_{\delta_s+\delta_t}, e_{\delta_1}, \dots,
\widehat{e}_{\delta_s}, \dots, \widehat{e}_{\delta_t}, \dots, e_{\delta_{m+1}})&=\sum_{q=1}^{dimN_{S\delta_{{m+1}}}}B_{{\delta_s+\delta_t, \delta_1, \dots,\widehat{{\delta_s}}, \dots, \widehat{{\delta_t}}, \dots, \delta_{m+1}}}^{p,q} e^q_{S\delta_{{m+1}}}.
\end{array}$$

Now consider (\ref{eqZ^m}) in terms of the notations above

$$\begin{array}{ll}
0&=\sum\limits_{s=1}^{m+1}(-1)^{s+1}[e_{\delta_s},\varphi(e_{\delta_1},
\dots, \widehat{e}_{\delta_s}, \dots , e_{\delta_{m+1}})]\\[3mm]
&+\sum\limits_{1\leq s<t\leq {m+1}}(-1)^{s+t}\varphi([e_{\delta_s},e_{\delta_t}], e_{\delta_1}, \dots,
\widehat{e}_{\delta_s}, \dots, \widehat{e}_{\delta_t}, \dots, e_{\delta_{m+1}})\\[3mm]
&=\sum\limits_{s=1}^{m+1}(-1)^{s+1}\sum_{p=1}^{dimN_{S\delta_{{m+1}\setminus s}}}B_{{\delta_1, \dots,\widehat{{\delta_s}}, \dots,  \delta_{m+1}}}^p
[e_{\delta_s},e^p_{S\delta_{{m+1}\setminus s}}]\\[3mm]
&+\sum\limits_{1\leq s<t\leq {m+1}}(-1)^{s+t}\sum_{p=1}^{dimN_{\delta_s+\delta_t}}A_{\delta_s, \delta_t}^p
\varphi(e^p_{\delta_s+\delta_t}, e_{\delta_1}, \dots,\widehat{e}_{\delta_s}, \dots, \widehat{e}_{\delta_t}, \dots, e_{\delta_{m+1}})\\[3mm]
&=\sum\limits_{s=1}^{m+1}(-1)^{s}\sum_{p=1}^{dimN_{S\delta_{{m+1}\setminus s}}}B_{{\delta_1, \dots,\widehat{{\delta_s}}, \dots,  \delta_{m+1}}}^p\sum_{q=1}^{dimN_{S\delta_{m+1}}}A_{S\delta_{{m+1}\setminus s}, \delta_s}^{p,q} e^q_{S\delta_{{m+1}}}\\[3mm]
&+\sum\limits_{1\leq s<t\leq {m+1}}(-1)^{s+t}\sum_{p=1}^{dimN_{\delta_s+\delta_t}}A_{\delta_s, \delta_t}^p
\sum_{q=1}^{dimN_{S\delta_{{m+1}}}}B_{{\delta_s+\delta_t, \delta_1, \dots,\widehat{{\delta_s}}, \dots, \widehat{{\delta_t}}, \dots, \delta_{m+1}}}^{p,q} e^q_{S\delta_{{m+1}}}\\[3mm]
&=\sum_{q=1}^{dimN_{S\delta_{m+1}}}[\sum\limits_{s=1}^{m+1}(-1)^{s}\sum_{p=1}^{dimN_{S\delta_{{m+1}\setminus s}}}B_{{\delta_1, \dots,\widehat{{\delta_s}}, \dots,  \delta_{m+1}}}^pA_{S\delta_{{m+1}\setminus s}, \delta_s}^{p,q}\\[3mm]
&+\sum\limits_{1\leq s<t\leq {m+1}}(-1)^{s+t}\sum_{p=1}^{dimN_{\delta_s+\delta_t}}A_{\delta_s, \delta_t}^p
B_{{\delta_s+\delta_t, \delta_1, \dots,\widehat{{\delta_s}}, \dots, \widehat{{\delta_t}}, \dots, \delta_{m+1}}}^{p,q}] e^q_{S\delta_{{m+1}}}.
\end{array}$$

From this for any $q$ such that $1\leq q\leq dim\mathcal{N}_{S\delta_{m+1}}$ we obtain the system of equalities

\begin{equation}\label{hm1}
S_m(\mathcal{R}):\left\{\begin{array}{ll}
\sum\limits_{s=1}^{m+1}(-1)^{s}\sum\limits_{p=1}^{dim\mathcal{N}_{S\delta_{{m+1}\setminus s}}}B_{{\delta_1, \dots,\widehat{{\delta_s}},
\dots,  \delta_{m+1}}}^pA_{S\delta_{{m+1}\setminus s}, \delta_s}^{p,q} \\[1mm]
+\sum\limits_{1\leq s<t\leq {m+1}}(-1)^{s+t}\sum\limits_{p=1}^{dim\mathcal{N}_{\delta_s+\delta_t}}A_{\delta_s, \delta_t}^p
B_{{\delta_s+\delta_t, \delta_1, \dots,\widehat{{\delta_s}}, \dots, \widehat{{\delta_t}}, \dots, \delta_{m+1}}}^{p,q}=0.\\[1mm]
\end{array}\right.\end{equation}

Let us check that the equality \eqref{hm1} is not changed if we replace $\delta_i$ to $\delta_{i+l}$ (similarly, if replace $\delta_{i+l}$ to
$\delta_{i}$). Obviously, it is enough to check for $\delta_i$ to $\delta_{i+1}.$

Consider

$$\begin{array}{ll}
&\sum\limits_{s=1}^{m+1}(-1)^{s}\sum_{p=1}^{dim\mathcal{N}_{S\delta_{{m+1}\setminus s}}}B_{{\delta_1, \dots,\delta_{i+1}, \delta_i, \dots, \widehat{{\delta_s}}, \dots,  \delta_{m+1}}}^pA_{S\delta_{{m+1}\setminus s}, \delta_s}^{p,q}\\[3mm]
+&\sum\limits_{1\leq s<t\leq {m+1}}(-1)^{s+t}\sum_{p=1}^{dim\mathcal{N}_{\delta_s+\delta_t}}A_{\delta_s, \delta_t}^pB_{{\delta_s+\delta_t, \delta_1, \dots,\delta_{i+1}, \delta_i,\dots, \widehat{{\delta_s}}, \dots, \widehat{{\delta_t}}, \dots, \delta_{m+1}}}^{p,q}\\[3mm]
=&\sum\limits_{s=1, s\notin\{i, i+1\}}^{m+1}(-1)^{s}\sum_{p=1}^{dim\mathcal{N}_{S\delta_{{m+1}\setminus s}}}B_{{\delta_1, \dots,\delta_{i+1}, \delta_i,  \dots, \widehat{{\delta_s}}, \dots,  \delta_{m+1}}}^pA_{S\delta_{{m+1}\setminus s}, \delta_s}^{p,q}\\[3mm]
+&(-1)^{i}\sum_{p=1}^{dim\mathcal{N}_{S\delta_{{m+1}\setminus i}}}B_{{\delta_1, \dots,\delta_{i-1}, \delta_i, \widehat{{\delta_{i+1}}}, \dots,  \delta_{m+1}}}^pA_{S\delta_{{m+1}\setminus i+1}, \delta_i}^{p,q}\\[3mm]
+&(-1)^{i+1}\sum_{p=1}^{dim\mathcal{N}_{S\delta_{{m+1}\setminus i+1}}}B_{{\delta_1, \widehat{{\delta_{i}}},\dots,\delta_{i+1}, \delta_{i+2}, \dots,  \delta_{m+1}}}^pA_{S\delta_{{m+1}\setminus i}, \delta_{i+1}}^{p,q}\\[3mm]
+&\sum\limits_{1\leq s<t\leq {m+1}, (s,t)\neq (i,i+1)}(-1)^{s+t}\sum_{p=1}^{dim\mathcal{N}_{\delta_s+\delta_t}}A_{\delta_s, \delta_t}^p
B_{{\delta_s+\delta_t, \delta_1, \dots,\delta_{i+1}, \delta_i,\dots, \widehat{{\delta_s}}, \dots, \widehat{{\delta_t}}, \dots, \delta_{m+1}}}^{p,q}\\[3mm]
+&(-1)^{2i+1}\sum_{p=1}^{dim\mathcal{N}_{\delta_s+\delta_t}}A_{\delta_{i+1}, \delta_i}^pB_{{\delta_{i+1}+\delta_i, \delta_1, \dots,\widehat{{\delta_{i+1}}}, \widehat{{\delta_i}}, \dots, \delta_{m+1}}}^{p,q}\\[3mm]
=&-\sum\limits_{s=1, s\notin\{i, i+1\}}^{m+1}(-1)^{s}\sum_{p=1}^{dim\mathcal{N}_{S\delta_{{m+1}\setminus s}}}B_{{\delta_1, \dots,\delta_{i}, \delta_{i+1},  \dots, \widehat{{\delta_s}}, \dots,  \delta_{m+1}}}^pA_{S\delta_{{m+1}\setminus s}, \delta_s}^{p,q}\\[3mm]
-&(-1)^{i+1}\sum_{p=1}^{dim\mathcal{N}_{S\delta_{{m+1}\setminus i}}}B_{{\delta_1, \dots,\delta_{i-1}, \delta_i, \widehat{{\delta_{i+1}}}, \dots,  \delta_{m+1}}}^pA_{S\delta_{{m+1}\setminus i+1}, \delta_i}^{p,q}\\[3mm]
-&(-1)^{i}\sum_{p=1}^{dim\mathcal{N}_{S\delta_{{m+1}\setminus i+1}}}B_{{\delta_1, \dots,\widehat{{\delta_{i}}}, \delta_{i+1}, \delta_{i+2}, \dots,  \delta_{m+1}}}^pA_{S\delta_{{m+1}\setminus i}, \delta_{i+1}}^{p,q}\\[3mm]
-&\sum\limits_{1\leq s<t\leq {m+1}, (s,t)\neq (i,i+1)}(-1)^{s+t}\sum_{p=1}^{dim\mathcal{N}_{\delta_s+\delta_t}}A_{\delta_s, \delta_t}^p
B_{{\delta_s+\delta_t, \delta_1, \dots,\delta_{i}, \delta_{i+1},\dots, \widehat{{\delta_s}}, \dots, \widehat{{\delta_t}}, \dots, \delta_{m+1}}}^{p,q}\\[3mm]
-&(-1)^{2i+1}\sum_{p=1}^{dim\mathcal{N}_{\delta_{i+1}+\delta_i}}A_{\delta_i,\delta_{i+1}}^p
B_{{\delta_{i+1}+\delta_i, \delta_1, \dots, \widehat{{\delta_i}}, \widehat{{\delta_{i+1}}}, \dots, \delta_{m+1}}}^{p,q}\\[1mm]
=&-\sum\limits_{s=1}^{m+1}(-1)^{s}\sum_{p=1}^{dim\mathcal{N}_{S\delta_{{m+1}\setminus s}}}B_{{\delta_1, \dots,\widehat{{\delta_s}}, \dots,  \delta_{m+1}}}^pA_{S\delta_{{m+1}\setminus s}, \delta_s}^{p,q}\\[1mm]
-&\sum\limits_{1\leq s<t\leq {m+1}}(-1)^{s+t}\sum_{p=1}^{dim\mathcal{N}_{\delta_s+\delta_t}}A_{\delta_s, \delta_t}^p
B_{{\delta_s+\delta_t, \delta_1, \dots,\widehat{{\delta_s}}, \dots, \widehat{{\delta_t}}, \dots, \delta_{m+1}}}^{p,q}=0.
\end{array}$$

Therefore, the number of equalities in \eqref{hm1} is reduced to the number of
the pair $(s,t)$ such that $1\leq s <t\leq m+1$ multiplied to $dim \mathcal{N}_{\delta_1+\dots, \delta_{m+1}}$, where $m+1$-tuple with paiwise different roots in $dim N_{\delta_1+\dots, \delta_{m+1}}$ is considered only one time.

Therefore, the number of independent equalities in System \eqref{hm1} is bounded the above by
\begin{equation}\label{criteriya8}
\mathcal{S}_m:=\sum\limits_{\delta_i\neq\delta_j}dim\mathcal{N}_{\delta_1}dim\mathcal{N}_{\delta_2}\cdots dim\mathcal{N}_{\delta_{m+1}}dim \mathcal{N}_{\delta_1+\dots+ \delta_{m+1}}.
\end{equation}

In combinatorics \cite{combinatorics} the general form of the principle of inclusion-exclusion states that for finite sets $A_1,\ldots, A_n$, one has the following formula
\begin{equation}\label{inclusion}
  \left|\bigcup\limits_{i=1}^{n}A_{i}\right|=\sum\limits_{k=1}^{n}(-1)^{k+1}\left(\sum\limits_{1\leq i_{1}<\cdots <i_{k}\leq n}|A_{i_{1}}\cap \cdots \cap A_{_{k}}|\right).
\end{equation}

Applying \eqref{inclusion} to the roots subspaces $\mathcal{N}_{\alpha}$ we find the number of free parameters defining an arbitrary element of the space $C^m(\mathcal{N},\mathcal{R})^\mathcal{Q}$  $\varphi$. Namely, we have
\begin{equation}\begin{array}{ll}\label{criteriya9}
dimC^m(\mathcal{N},\mathcal{R})^{\mathcal{Q}}&=\sum\limits_{\delta_1\in W}dim\mathcal{N}_{\delta_1}(dim \mathcal{N}_{\delta_1}-1)\cdots (dim\mathcal{N}_{\delta_{1}}-m+1)dim\mathcal{N}_{m\delta_1}\\[1mm]
&+\sum\limits_{i=1}^{m-1}\sum\limits_{\begin{array}{ll}
1\leq r_1\leq\dots\leq r_i,\\[1mm]
\sum r_i<m,\ \delta_i\neq \delta_j\in W\\[1mm]
\end{array}}^{}\Big(\prod\limits_{j=1,\delta_i\neq\delta_j\in W}^{i}\prod\limits_{t=0}^{r_j-1}(dim\mathcal{N}_{\delta_j}-t)\Big)\times\\[1mm]
&\times\Big(\prod\limits_{t=0}^{m-1-\sum r_i}(dim\mathcal{N}_{\delta_{i+1}}-t)\Big)dim\mathcal{N}_{\sum r_i\delta_i+(m-\sum r_i)\delta_{i+1}}.\\[1mm]
\end{array}\end{equation}

From \eqref{hm1} and \eqref{criteriya9} we finally get
\begin{equation}\label{eqmain}
dimZ^m(\mathcal{N},\mathcal{R})^{\mathcal{Q}}=dimC^m(\mathcal{N},\mathcal{R})^{\mathcal{Q}}-
rankS_m(\mathcal{R}).
\end{equation}

\

\subsection{Particular case: $dim\mathcal{N}_\alpha=1$ for any $\alpha\in W$.}

\

\

Here we focus our study on the condition $dim\mathcal{N}_\alpha=1$ for all $\alpha \in W,$ where
$\mathcal{N}_\alpha$ is root subspace with respect to action of $\mathcal{T}_{max}.$ Note that
the condition $dim\mathcal{N}_\alpha=1$ for all $\alpha \in W$ implies that nilradical of $\mathcal{R}$ is pure non-characteristically nilpotent Lie algebra and the condition {\bf A)} holds true. Then maximal solvable extension of such kind nilradical is isomorphic to $\mathcal{R}_{\mathcal{T}_{max}}$.

Equalities \eqref{criteriya9} and \eqref{criteriya9} have the following form:
$$dimC^m(\mathcal{N},\mathcal{R})^{\mathcal{Q}}=\sum\limits_{\delta_i\neq \delta_j\in W}dim\mathcal{N}_{\delta_1+\delta_2+\dots+\delta_{m}},$$
$$dimZ^m(\mathcal{N},\mathcal{R})^{\mathcal{Q}}=\sum\limits_{\delta_i\neq \delta_j\in W}dim\mathcal{N}_{\delta_1+\delta_2+\dots+\delta_{m}}-rankS_m(\mathcal{R}).$$

Consequently, for the space $B^m(\mathcal{N},\mathcal{R})^{\mathcal{Q}}$ we obtain
$$dimB^m(\mathcal{N},\mathcal{R})^{\mathcal{Q}}=dimC^{m-1}(\mathcal{N},\mathcal{R})^{\mathcal{Q}}-
dimZ^{m-1}(\mathcal{N},\mathcal{R})^{\mathcal{Q}}=rankS_{m-1}(\mathcal{R}),$$
which implies
\begin{equation}\label{hm4}
dimH^m(\mathcal{N},\mathcal{R})^\mathcal{Q}=\sum\limits_{\delta_i\neq \delta_j\in W} dim\mathcal{N}_{\delta_1+\delta_2+\dots+\delta_{m}}-rankS_m(\mathcal{R})-rankS_{m-1}(\mathcal{R}).
\end{equation}

\begin{lem}\label{hm2} Let $\mathcal{R}$ be a maximal solvable extension such that for any $\alpha+\beta+\gamma \in W$ with $\alpha, \beta, \gamma \in W$  imply either $\gamma=\beta$ or $\gamma=\alpha$. Then $dimH^m(\mathcal{N},\mathcal{R})^{\mathcal{Q}}=0$ for any $m\geq 0.$
\end{lem}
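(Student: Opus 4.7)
The plan is to apply formula (6.4) of the preceding subsection and verify, for each $m \geq 2$, the identity $\operatorname{rank} S_m(\mathcal{R}) + \operatorname{rank} S_{m-1}(\mathcal{R}) = \sum_{\delta_i \neq \delta_j \in W} \dim \mathcal{N}_{\delta_1 + \dots + \delta_m}$, which forces the right-hand side of (6.4) to vanish. The cases $m = 0$ and $m = 1$ are already settled: the first follows from $\mathrm{Center}(\mathcal{R}) = 0$ as in the proof of Theorem \ref{thmH1eq0}, and the second is a direct consequence of Theorem \ref{thmH1eq0}. Thus the work is concentrated in degrees $m \geq 2$.

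The key structural consequence of the hypothesis is the following: whenever $\delta_1, \delta_2, \delta_3 \in W$ are pairwise distinct, $\delta_1 + \delta_2 + \delta_3 \notin W$, so $\dim \mathcal{N}_{\delta_1 + \delta_2 + \delta_3} = 0$ and every iterated bracket $[e_{\delta_i}, [e_{\delta_j}, e_{\delta_k}]]$ on pairwise distinct roots vanishes. When applied to the coboundary formula $d\varphi(e_{\delta_0},\ldots,e_{\delta_m}) = \sum_i (-1)^i [e_{\delta_i}, \varphi(\ldots\widehat{e_{\delta_i}}\ldots)] + \sum_{i<j}(-1)^{i+j}\varphi([e_{\delta_i},e_{\delta_j}],\ldots)$, this collapses both the outer bracket terms (which require $\delta_i$ plus the remaining partial sum to lie in $W$) and the inner bracket terms (which require a pair $\delta_i+\delta_j$ to coincide with some remaining $\delta_k$ or cancel with another pair). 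The upshot is that the cocycle system $S_m(\mathcal{R})$ decouples completely across weights.

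I would exploit this by decomposing $C^*(\mathcal{N},\mathcal{R})^{\mathcal{Q}}$ into weight-blocks $(C^*_\beta, d_\beta)$ indexed by the output weight $\beta = \delta_1 + \dots + \delta_m$ (which ranges over $W \cup \{0\}$), and proving acyclicity of each block separately. For $\beta \in W$, the one-dimensional target combined with the simplification above identifies $(C^*_\beta, d_\beta)$ with a shifted Koszul-type complex on the finite set of roots $\{\delta \in W : \beta - \delta \in W\}$; an explicit contracting homotopy is obtained by inverting the scalar structure constants $c(\delta, \beta-\delta)$ on the one-dimensional sectors. For $\beta = 0$, where the target equals $\mathcal{Q}$ of dimension $s$, one constructs the analogous homotopy using the explicit diagonal action $[e_\alpha, x_j] = \alpha_j e_\alpha$ from Theorem \ref{mainthm} together with the nondegeneracy of the pairing of the roots with $\{x_1, \ldots, x_s\}$ guaranteed by Lemma \ref{lem111}.

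The main obstacle is the weight-zero block, since its $s$-dimensional target couples several $\mathcal{Q}$-components inside a single cocycle equation. One has to verify carefully that the resulting coupled linear system has full rank on the relevant sector, so that after summing over all weights $\beta$ one obtains $\operatorname{rank} S_m + \operatorname{rank} S_{m-1} = \dim C^m(\mathcal{N},\mathcal{R})^{\mathcal{Q}}$. Once this identity is established, formula (6.4) immediately yields $\dim H^m(\mathcal{N},\mathcal{R})^{\mathcal{Q}} = 0$ for every $m \geq 0$, as required.
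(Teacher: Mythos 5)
Your reduction is set up correctly: the cases $m=0,1$ do follow from Theorem \ref{thmH1eq0}, and forcing the right-hand side of \eqref{hm4} to vanish for $m\geq 2$ is exactly what is needed; you have also isolated the right consequence of the hypothesis (a sum of three pairwise distinct roots is never a root). But from there your route diverges from the paper's and leaves a genuine gap. The paper's argument is a pure dimension count: the hypothesis forces $\mathcal{S}_m=0$, hence $\operatorname{rank}S_m(\mathcal{R})=0$, for every $m\geq 2$, and it forces $\sum_{\delta_i\neq\delta_j}\dim\mathcal{N}_{\delta_1+\cdots+\delta_m}=0$ for $m\geq 3$; thus the invariant complex is concentrated in degrees $\leq 2$ and the whole lemma reduces to the single equality $\sum_{\delta_1\neq\delta_2}\dim\mathcal{N}_{\delta_1+\delta_2}=\dim B^2(\mathcal{N},\mathcal{R})^{\mathcal{Q}}=n-\operatorname{codim}(\mathcal{N})$. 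There is nothing left to contract, so the Koszul-type homotopies you propose are unnecessary in degrees $\geq 3$; and in degree $2$, where such a homotopy would amount to showing that $d^1$ surjects onto $C^2(\mathcal{N},\mathcal{R})^{\mathcal{Q}}$, your recipe of ``inverting the scalar structure constants $c(\delta,\beta-\delta)$'' silently presupposes that these constants are nonzero for every pair of distinct roots whose sum is a root. That is exactly the content of $\dim B^2=\dim C^2$ and is not delivered by the weight decomposition alone, so the one genuinely non-trivial point of the proof is assumed rather than proved.

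The second problem is the step you yourself flag as ``the main obstacle,'' the weight-zero block: it is in fact empty. By Lemma \ref{lem111} and the construction preceding Theorem \ref{mainthm}, every root is a non-trivial non-negative integer combination of the $s$ linearly independent primitive roots $\alpha_1,\dots,\alpha_s$, so no nonempty collection of roots sums to zero and no $\mathcal{Q}$-invariant cochain on $\mathcal{N}$ takes values in $\mathcal{Q}$. Recognizing this would have removed the obstacle entirely; as written, you defer the hardest-looking part of your argument to an unverified claim that ``the resulting coupled linear system has full rank.'' Together with the unjustified nonvanishing of the structure constants in degree $2$, this means the proposal, as it stands, does not constitute a complete proof.
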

\begin{proof} Note that Theorem \ref{thmH1eq0} provides $dimH^0(\mathcal{N},\mathcal{R})^{\mathcal{Q}}=dimH^1(\mathcal{N},\mathcal{R})^{\mathcal{N}}=0.$
Also we note that, without loss of generality, from the condition of lemma on roots of $\mathcal{N}$ we can conclude that $\delta_1+\delta_2+\ldots+\delta_{m+1}\in W$ with $\delta_i\in W$ and $m\geq 2$ implies $\delta_{m}=\delta_{m+1},$ which means $$\sum\limits_{\delta_i\neq\delta_j}dim\mathcal{N}_{\delta_1}dim\mathcal{N}_{\delta_2}\cdots dim\mathcal{N}_{\delta_{m+1}}dim \mathcal{N}_{\delta_1+\dots+ \delta_{m+1}}=0.$$
Therefore, $\mathcal{S}_m=0$ for $m\geq 2.$

Since $rankS_{m}(\mathcal{R})\leq \mathcal{S}_m=0,$ we conclude $rankS_{m}(\mathcal{R})=0$ for $m\geq 2$.

So, due to Equality \eqref{hm4} we have
$$dimH^2(\mathcal{N},\mathcal{R})^{\mathcal{Q}}=\sum\limits_{\delta_i\neq \delta_j\in W} dim\mathcal{N}_{\delta_1+\delta_2}-dimB^2(\mathcal{N},\mathcal{R})^{\mathcal{Q}},$$
$$dimH^m(\mathcal{N},\mathcal{R})^{\mathcal{Q}}=\sum\limits_{\delta_i\neq \delta_j\in W} dim\mathcal{N}_{\delta_1+\delta_2+\dots+\delta_{m}}, \quad m\geq 3.$$

The proof of the lemma is completed by the following observations
$$\sum\limits_{\delta_i\neq \delta_j\in W}dim\mathcal{N}_{\delta_1+\delta_2}=dimB^2(\mathcal{N},\mathcal{R})^{\mathcal{Q}}=n-codim(\mathcal{N})$$
and $$\sum\limits_{\delta_i\neq \delta_j\in W}dim\mathcal{N}_{\delta_1+\delta_2+\dots+\delta_{m}}=0 \ \mbox{(because of the consitions of lemma)}.$$
\end{proof}

\begin{thm}\label{hm3} Let $\mathcal{R}$ be a maximal solvable extension such that for any $\alpha+\beta+\gamma \in W$ with $\alpha, \beta, \gamma \in W$  imply either $\gamma=\beta$ or $\gamma=\alpha$. Then $H(\mathcal{R},\mathcal{R})=0.$
\end{thm}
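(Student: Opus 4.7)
The plan is to reduce Theorem~\ref{hm3} directly to Lemma~\ref{hm2} by invoking the Hochschild--Serre factorization (Theorem~\ref{thmSerre}). First I would verify that the hypotheses of Theorem~\ref{thmSerre} are indeed satisfied by the algebra $\mathcal{R}$. By Theorem~\ref{torus}, since $\mathcal{N}$ is pure non-characteristically nilpotent and condition \textbf{A)} holds (which is automatic in the setting $\dim\mathcal{N}_\alpha=1$ underlying Section~6.1), the maximal solvable extension $\mathcal{R}$ is isomorphic to $\mathcal{R}_{\mathcal{T}_{max}}=\mathcal{N}\oplus\mathcal{T}_{max}$. Here $\mathcal{Q}=\mathcal{T}_{max}$ is abelian, and by construction the operators $ad_x$ for $x\in\mathcal{Q}$ act diagonally on $\mathcal{N}$ (and trivially on $\mathcal{Q}$), so $ad_x$ is diagonalizable on all of $\mathcal{R}$.

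Once the hypotheses are checked, Theorem~\ref{thmSerre} yields the decomposition
\begin{equation*}
H^p(\mathcal{R},\mathcal{R})\simeq \sum_{a+b=p} H^a(\mathcal{Q},\mathbb{C})\otimes H^b(\mathcal{N},\mathcal{R})^{\mathcal{Q}}.
\end{equation*}
As noted immediately after Theorem~\ref{thmSerre} in the preliminaries, since $H^a(\mathcal{Q},\mathbb{C})=\bigwedge^a\mathcal{Q}$ is never zero for $0\le a\le s=\dim\mathcal{Q}$, it follows that $H^p(\mathcal{R},\mathcal{R})=0$ is equivalent to $H^b(\mathcal{N},\mathcal{R})^{\mathcal{Q}}=0$ for every $0\le b\le p$.

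Finally I would invoke Lemma~\ref{hm2}, whose hypothesis is identical to that of Theorem~\ref{hm3}, to conclude that $H^m(\mathcal{N},\mathcal{R})^{\mathcal{Q}}=0$ for every $m\ge 0$. Combining this with the displayed isomorphism shows that every summand on the right-hand side vanishes, hence $H^p(\mathcal{R},\mathcal{R})=0$ for all $p\ge 0$, and therefore $H(\mathcal{R},\mathcal{R})=\bigoplus_{p>0}H^p(\mathcal{R},\mathcal{R})=0$. There is no essential obstacle here: the theorem is a formal corollary of Lemma~\ref{hm2} together with the Hochschild--Serre factorization; the only thing requiring care is the verification that $\mathcal{R}$ is really of the split diagonal form required by Theorem~\ref{thmSerre}, which is exactly what Theorem~\ref{torus} provides.
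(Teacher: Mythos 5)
Your proposal is correct and follows essentially the same route as the paper: the authors likewise prove the theorem by combining Lemma \ref{hm2} (which gives $H^m(\mathcal{N},\mathcal{R})^{\mathcal{Q}}=0$ for all $m\geq 0$ under the identical root hypothesis) with the Hochschild--Serre factorization \eqref{eq222}, the applicability of which rests on the diagonal form of $\mathcal{R}\cong\mathcal{R}_{\mathcal{T}_{max}}$ established earlier. Your extra care in verifying the hypotheses of Theorem \ref{thmSerre} via Theorem \ref{torus} is a point the paper leaves implicit but is entirely consistent with its argument.
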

\begin{proof} The proof of the theorem is completed by the applications of the results of Theorem \ref{thmH1eq0}, Lemma \ref{hm2} and Equality \eqref{eq222}, that is, we derive $dimH^m(\mathcal{R},\mathcal{R})=0$ for all $m\geq 0$.
\end{proof}

\begin{thm}\label{2021q1} Let $\mathcal{R}$ be a maximal solvable extension such that
$\mathcal{N}^3=0$. Then $H(\mathcal{R},\mathcal{R})=0.$
\end{thm}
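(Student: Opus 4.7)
The strategy is to apply the Hochschild--Serre decomposition of Theorem \ref{thmSerre}, reducing $H(\mathcal{R},\mathcal{R})=0$ to the vanishing of $H^{b}(\mathcal{N},\mathcal{R})^{\mathcal{Q}}$ for every $b\ge 0$. The cases $b=0,1$ are already contained in the proof of Theorem \ref{thmH1eq0}, so the real task is $b\ge 2$, which I would handle by reducing to Theorem \ref{hm3}; that is, by verifying its ``root condition'' directly from $\mathcal{N}^{3}=0$.

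The core claim is that no three pairwise distinct roots $\alpha,\beta,\gamma\in W$ can satisfy $\alpha+\beta+\gamma\in W$. Indeed, because $\mathcal{N}$ is $2$-step nilpotent, every element of $W$ is either a generator weight $\lambda_{i}$ or a sum $\lambda_{i}+\lambda_{j}$ of two generator weights, and by Lemma \ref{lem111} the maximality of $\mathcal{T}_{max}$ forces the only linear relations among the $\lambda_{i}$'s to be the coincidence relations $\lambda_{i}+\lambda_{j}=\lambda_{i'}+\lambda_{j'}$ coming from the system $S_{e}$. Every such coincidence relation has coefficient sum $0$, whereas substituting the $\lambda_{i}$-expansions of $\alpha,\beta,\gamma,\mu$ into the equation $\alpha+\beta+\gamma-\mu=0$ produces a linear combination of $\lambda_{i}$'s whose coefficient sum is at least $1$ (the minimum, $1+1+1-2=1$, is attained when $\alpha,\beta,\gamma$ are generator weights and $\mu$ is a commutator weight). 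Since coefficient sum is a linear functional vanishing on every coincidence relation, no such equation can hold in $\mathcal{T}_{max}^{*}$, and the root condition follows.

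With the root condition in place, the rank-counting of Lemma \ref{hm2} gives $\mathrm{rank}\,S_{m}(\mathcal{R})=0$ for every $m\ge 2$, and the identity \eqref{eqmain} combined with $\dim B^{b}(\mathcal{N},\mathcal{R})^{\mathcal{Q}}=\mathrm{rank}\,S_{b-1}(\mathcal{R})$ yields $H^{b}(\mathcal{N},\mathcal{R})^{\mathcal{Q}}=0$ for all $b\ge 2$. The main technical obstacle I foresee is extending the argument of Lemma \ref{hm2} beyond the particular case $\dim\mathcal{N}_{\alpha}=1$, which can genuinely fail under $\mathcal{N}^{3}=0$ (for instance $[e_{1},e_{2}]=[e_{1},e_{3}]\neq 0$ forces $\lambda_{2}=\lambda_{3}$, whence $\dim\mathcal{N}_{\lambda_{2}}\ge 2$); however, the root condition eliminates all cochain contributions coming from tuples of pairwise distinct weights, and the remaining ``repeated-weight'' contributions are absorbed into coboundaries by essentially the same cancellations as in the proof of Lemma \ref{hm2}.
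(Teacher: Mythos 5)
Your proposal is correct in substance but reaches the result by a somewhat different route than the paper. The paper does not pass through Theorem \ref{hm3}: it works directly with the dimension formula \eqref{hm4}, arguing that $\mathcal{N}^3=0$ forces $\sum_{\delta_i\neq\delta_j}\dim\mathcal{N}_{\delta_1+\dots+\delta_m}=\mathrm{rank}\,S_m(\mathcal{R})=0$ for $m\geq 3$, and for $m=2$ that $\mathrm{rank}\,S_2(\mathcal{R})=0$ while $\sum_{\delta_1\neq\delta_2}\dim\mathcal{N}_{\delta_1+\delta_2}=n-k=\dim B^2(\mathcal{N},\mathcal{R})^{\mathcal{Q}}$; the cases $m=0,1$ come from Theorem \ref{thmH1eq0} exactly as you say. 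Your reduction to Theorem \ref{hm3} via the root condition is legitimate, and your coefficient-sum (degree) argument is a genuine improvement: the paper merely asserts that sums of three or more roots never lie in $W$ when $\mathcal{N}^3=0$, whereas your observation that every relation among the generator weights is a combination of degree-homogeneous coincidence relations $\lambda_i+\lambda_j=\lambda_{i'}+\lambda_{j'}$, while $\alpha+\beta+\gamma-\mu$ has degree at least $3-2=1$, actually proves it (and shows the hypothesis of Theorem \ref{hm3} holds vacuously, with no need to distinguish repeated roots). The one soft spot is your final paragraph: Lemma \ref{hm2}, Theorem \ref{hm3} and formula \eqref{hm4} are all established only under the subsection's standing hypothesis $\dim\mathcal{N}_\alpha=1$ for every $\alpha\in W$, and your claim that the remaining ``repeated-weight'' contributions are ``absorbed into coboundaries by essentially the same cancellations'' is an assertion, not an argument. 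That said, the paper's own proof lives in the same subsection and rests on \eqref{hm4}, so it is subject to exactly the same restriction: read as a statement in the particular case $\dim\mathcal{N}_\alpha=1$ (as the paper intends), your reduction is complete and the hedging in the last paragraph is unnecessary; read as a fully general statement, neither your argument nor the paper's establishes it.
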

\begin{proof} The condition $\mathcal{N}^3=0$ implies that for any $m$ ($m\geq 3$) we have
$$\sum\limits_{\delta_i\neq \delta_j\in W} dim\mathcal{N}_{\delta_1+\delta_2+\dots+\delta_{m}}=rankS_m(\mathcal{R})=rankS_{m-1}=0.$$
Therefore, from Equality \eqref{hm4} we conclude $dim H^m(\mathcal{N},\mathcal{R})^Q=0$ for any $m\geq 3$.

Since $\mathcal{N}^3=0$, we get $dim \mathcal{N}^2=n-k$ and $\mathcal{S}_2=0$. Therefore,
$$\sum\limits_{\delta_1\neq \delta_2\in W}dim\mathcal{N}_{\delta_1+\delta_2}=n-k,
\quad rankS_2(\mathcal{R})=0.$$

Taking into account that $dimB^2(\mathcal{N},\mathcal{R})^{\mathcal{Q}}=n-k$ from Equality \eqref{hm4} we get
$$dimH^2(\mathcal{N},\mathcal{R})^{\mathcal{Q}}=\sum\limits_{\delta_1\neq \delta_2\in W} dim\mathcal{N}_{\delta_1+\delta_2}-dimB^2(\mathcal{N},\mathcal{R})^{\mathcal{Q}}=0.$$

Thanks to Theorem \ref{thmH1eq0} we have $dimH^0(\mathcal{N},\mathcal{R})^{\mathcal{Q}}=dimH^1(\mathcal{N},\mathcal{R})^{\mathcal{Q}}=0.$
Thus, we obtain $H^m(\mathcal{N},\mathcal{R})^{\mathcal{Q}}=0$ for any $m\geq 0$ and from Equality \eqref{eq222} we get $dim H^m(\mathcal{R},\mathcal{R})=0,\ m\geq 0$.
\end{proof}

\section{Application of Theorem \ref{hm3} to some solvable Lie algebras.}

In this section we give the applications of Theorem \ref{hm3}. Namely, we show that the results of the previous section much simplify the calculations of the cohomology groups.

\begin{exam}
In the paper \cite{Ancochea1} for a solvable Lie algebra $\mathcal{R}=\mathcal{N}\oplus \mathcal{Q}$ with the following table of multiplications:
$$\begin{array}{lll}
[e_i,e_1]=e_{i+1}, &2 \leq i \leq n_1, &\\[1mm]
[e_{n_1+\dots+n_{j}+i},e_1]=e_{n_1+\ldots+n_{j}+1+i},& 1\leq j\leq k-1,\ 2\leq i\leq n_{j+1}. \\[1mm]
[e_1,x_1]=e_{1}, &\\[1mm]
[e_i,x_1]=(i-2)e_{i}, &3 \leq i \leq n_1+1, &\\[1mm]
[e_{n_1+\dots+n_{j}+i},x_1]=(i-2)e_{n_1+\dots+n_{j}+i}, & 1\leq j\leq k-1,\ 3\leq i\leq n_{j+1}+1, &\\[1mm]
[e_i,x_2]=e_{i}, &2 \leq i \leq n_1+1, &\\[1mm]
[e_{n_1+\dots+n_{j}+i},x_{j+2}]=e_{n_1+\dots+n_{j}+i}, & 1\leq j\leq k-1,\ 2\leq i\leq n_{j+1}+1,  &\\[1mm]
\end{array}$$
where $\{e_1,\dots,e_{n_1+1} ,\dots,e_{n_1+n_2+1},\dots,e_{n_1+\dots + n_{k-1} +1},\dots, e_{n_1+\dots+n_k +1} \}$ is a basis of the nilradical $\mathcal{N}$ and $\{x_1, \dots, x_{k+1}\}$ is a basis of $\mathcal{Q}$, it was proved that $H^2(\mathcal{R},\mathcal{R})=0.$

From the table of multiplications of $\mathcal{R}$ we conclude that $\mathcal{R}\cong \mathcal{R}_{\mathcal{T}_{max}}$
and
$$\mathcal{N}_{\alpha_{n_1+2}}=\{e_{n_1+2}\}, \ \mathcal{N}_{(i-2)\alpha_1+\alpha_{n_1+2}}=\{e_{n_1+i} \}, \ 3\leq i\leq n_2+1 $$
$$\mathcal{N}_{\alpha_{n_1+\dots+n_j+1}}=\{e_{n_1+\dots+n_j+2}\}, \ \mathcal{N}_{(i-2)\alpha_1+\alpha_{n_1+\dots+n_j+2}}=\{e_{n_1+\dots+n_j+i} \}, \ 2\leq j\leq k-1,\ 3\leq i\leq n_{j+1}+1.$$

Since we are in the conditions of Theorem \ref{hm3}, we get not only $H^2(\mathcal{R},\mathcal{R})=0$, but also $H(\mathcal{R},\mathcal{R})=0$.
\end{exam}

\begin{exam} Let us consider the $(2n+2)$-dimensional solvable Lie algebra $r_{2n+2}$. As we already showed in Example \ref{com51} this algebra is isomorphic to the algebra $\mathcal{R}_{\mathcal{T}_{max}}= Q_{2n}\oplus \mathcal{T}_{max}.$

For root subspaces of $Q_{2n}$ we have
$$(Q_{2n})_{\alpha_{1}}=\{e_1\},\ (Q_{2n})_{\alpha_{2}}=\{e_2\}, \ (Q_{2n})_{(i-2)\alpha_1+\alpha_2}=\{e_{i} \}, \ 3\leq i\leq 2n-1,\ (Q_{2n})_{(2n-3)\alpha_1+2\alpha_2}=\{e_{2n}\}. $$

Since roots satisfy the conditions of Theorem \ref{hm3}, we conclude $H(r_{2n+2},r_{2n+2})=0$.
\end{exam}

\begin{exam} For the solvable Lie algebra $R(\bar{N})$ mentioned in Example \ref{com52}
we have $R(\bar{N})= \bar{N}\oplus \mathcal{T}_{max}$ and
$$\bar{N}_{\alpha_{1}}=\{e_n\},\ \bar{N}_{\alpha_{2}}=\{e_{n-2}\},\ \bar{N}_{\alpha_{3}}=\{e_{n-1}\},\ \bar{N}_{(i-2)\alpha_1+\alpha_2}=\{e_{n-i} \}, \ 3\leq i\leq n-1. $$
Again, because the roots satisfy the conditions of Theorem \ref{hm3} we get $H(R(\bar{N}),R(\bar{N}))=0$.
\end{exam}

\begin{exam} Consider the solvable Lie algebra $R(n_{n,1})=n_{n,1}\oplus \mathcal{T}_{max}$ from Example \ref{com53}. Then for its nilradical $n_{n,1}$ we have roots decomposition with
$$(n_{n,1})_{\alpha_{1}}=\{e_n\},\ (n_{n,1})_{\alpha_{2}}=\{e_{n-1}\},\ (n_{n,1})_{(i-1)\alpha_1+\alpha_2}=\{e_{n-i} \}, \ 2\leq i\leq n-1. $$

Thanks to Theorem \ref{hm3} we conclude $H(R(n_{n,1}),R(n_{n,1}))=0$.
\end{exam}

Below we give examples of solvable Lie algebras $\mathcal{R}_{\mathcal{T}_{max}}$ such that the roots of their nilradicals do not satisfy the condition of Theorem \ref{hm3}.

\begin{exam} Let us consider a solvable Lie algebra $\mathcal{R}_{\mathcal{T}_{max}}$ with the following table of multiplications:
$$
\left\{\begin{array}{llllll}\label{leger4}
[e_1,e_2]=e_4, & [e_1,e_3]=e_5,& [e_2,e_3]=e_6,& \\[1mm]
[e_4,e_3]=e_7, &[e_6,e_1]=-e_7,& \\[1mm]
[e_1,x_1]=e_1, & [e_4,x_1]=e_4,& [e_5,x_1]=e_5,& [e_7,x_1]=e_7,\\[1mm]
[e_2,x_2]=e_2, & [e_4,x_2]=e_4,& [e_6,x_2]=e_6,& [e_7,x_2]=e_7,\\[1mm]
[e_3,x_3]=e_3, & [e_5,x_3]=e_5,& [e_6,x_3]=e_6,& [e_7,x_3]=e_7.\\[1mm]
\end{array}\right.
$$
For its nilradical we have
$$\mathcal{N}_{\alpha_1}=\{e_1\},\  \mathcal{N}_{\alpha_2}=\{e_2\},\  \mathcal{N}_{\alpha_3}=\{e_3\},\  \mathcal{N}_{\alpha_1+\alpha_2}=\{e_4\},$$
$$\mathcal{N}_{\alpha_1+\alpha_3}=\{e_5\}
\ \mathcal{N}_{\alpha_2+\alpha_3}=\{e_6\},\  \mathcal{N}_{\alpha_1+\alpha_2+\alpha_3}=\{e_7\}.$$
Clearly, the roots of the nilradical do not satisfy the condition of Theorem \ref{hm3}. The computations lead to $dimH^2(\mathcal{R},\mathcal{R})=1.$ Therefore, $H(\mathcal{R},\mathcal{R})\neq 0.$
\end{exam}

\begin{exam} Let us consider a Borel subalgebra $B$ of a semi-simple Lie algebra over a field of characteristic $0.$ It is not difficult to check that the roots of nilradical of $B$ do not satisfy the condition of Theorem \ref{hm3}, while in the paper \cite{Leger2} it was proved that $H(B, B) = 0.$
\end{exam}

{\bf Acknowledgments:} The authors are grateful to Professors V. Gorbatsevich and L. \v{S}nobl for their valuable comments which corrected the original version of the paper.

\end{document}